\documentclass[12pt]{amsart}

%% packages

\usepackage{amssymb}
\usepackage{url}
\usepackage{amscd}
\usepackage{geometry}
\geometry{
 papersize = {8.5in,11in},
 lmargin=1.25 in,
 rmargin = 1.25 in,
 top=1.25 in,
 }

%%%%%%%%%%%%%%%%%%%%%%%%%%%%%%%%%%%%%%%%%%%%%%%%%%%%%%%%%%%%%%%%%%%%%%%%%
%%% Theorem declarations 
%%%%%%%%%%%%%%%%%%%%%%%%%%%%%%%%%%%%%%%%%%%%%%%%%%%%%%%%%%%%%%%%%%%%%%%%%

\theoremstyle{plain}
\newtheorem{theorem}{Theorem}

\newtheorem{corollary}[theorem]{Corollary}
\newtheorem{lemma}[theorem]{Lemma}
\newtheorem{proposition}[theorem]{Proposition}

\theoremstyle{remark}
\newtheorem{remark}{Remark}
\newtheorem{example}{Example}
\newtheorem{definition}{Definition}
\newtheorem{question}{Question}
\newtheorem{problem}{Problem}

%%%%%%%%%%%%%%%%%%%%%%%%%%%%%%%%%%%%%%%%%%%%%%%%%%%%%%%%%%%%%%%%%%%%%%%%%
%%%  Topmatter 
%%%%%%%%%%%%%%%%%%%%%%%%%%%%%%%%%%%%%%%%%%%%%%%%%%%%%%%%%%%%%%%%%%%%%%%%%

\begin{document}

\date{\today} 

\title{On the Poisson Relation for flat and spherical space forms}

\author{Donato Cianci}

\address{Mathematics Department, Dartmouth College, Hanover, NH 03755}

\email{dcianci@math.dartmouth.edu}

\thanks{The author was partially supported by a GAANN Fellowship at Dartmouth College.}

\subjclass{}

\begin{abstract}
A general approach to proving that the length spectrum of a compact Riemannian manifold is an invariant of the Laplace spectrum comes from considering the wave trace, a spectrally determined tempered distribution. The Poisson relation states that the singularities of the wave trace can only occur at lengths of closed geodesics. Proving that the Poisson relation is an equality; i.e., every length of a closed geodesic is a singularity of the wave trace, yields an effective means of recovering the length spectrum from the Laplace spectrum.  Regarding spaces of constant curvature this is known to be true for spaces of constant negative curvature. Continuing the study of space forms, we show that the Poisson relation is an equality for all flat manifolds, homogeneous lens spaces and three-dimensional lens spaces with fundamental group of prime order. 
\end{abstract}

\maketitle

%%%%%%%%%%%%%%%%%%%%%%%%%%%%%%%%%%%%%%%%%%%%%%%%%%%%%%%%%%%%%%%%%%%%%%%%%
% Macros
%%%%%%%%%%%%%%%%%%%%%%%%%%%%%%%%%%%%%%%%%%%%%%%%%%%%%%%%%%%%%%%%%%%%%%%%%

\newcommand\cont{\operatorname{cont}}
\newcommand\diff{\operatorname{diff}}

\newcommand{\dvol}{\text{dvol}}

\newcommand{\GL}{\operatorname{GL}}
\newcommand{\myO}{\operatorname{O}}
\newcommand{\myP}{\operatorname{P}}
\newcommand{\eye}{\operatorname{Id}}
\newcommand{\myF}{\operatorname{F}}
\newcommand{\vol}{\operatorname{vol}}
\newcommand{\odd}{\operatorname{odd}}
\newcommand{\even}{\operatorname{even}}
\newcommand{\ol}{\overline}
\newcommand{\mye}{\operatorname{E}}
\newcommand{\myo}{\operatorname{o}}
\newcommand{\myt}{\operatorname{t}}

\newcommand{\re}{\operatorname{Re}}

\newcommand{\scal}{\operatorname{scal}}
\newcommand{\spec}{\operatorname{spec}}
\newcommand{\tr}{\operatorname{trace}}
\newcommand{\sgn}{\operatorname{sgn}}
\newcommand{\SL}{\operatorname{SL}}
\newcommand{\myspan}{\operatorname{span}}
\newcommand{\mydet}{\operatorname{det}}
\newcommand{\SO}{\operatorname{SO}}
\newcommand{\specl}{\operatorname{spec_{\mathcal{L}}}}
\newcommand{\fix}{\operatorname{Fix}}
\newcommand{\id}{\operatorname{id}}
\newcommand{\singsup}{\operatorname{singsupp}}
\newcommand{\wave}{\operatorname{wave}}
\newcommand{\ind}{\operatorname{ind}}
\newcommand{\mynull}{\operatorname{null}}
\newcommand{\floor}[1]{\left \lfloor #1  \right \rfloor}

\newcommand\restr[2]{{% we make the whole thing an ordinary symbol
  \left.\kern-\nulldelimiterspace % automatically resize the bar with \right
  #1 % the function
  \vphantom{\big|} % pretend it's a little taller at normal size
  \right|_{#2} % this is the delimiter
  }}

%%%%%%%%%%%%%%%%%%%%%%%%%%%%%%%%%%%%%%%%%%%%%%%%%%%%%%%%%%%%%%%%%%%%%%%%%

\section{Introduction}

The {\it Laplace spectrum} of a closed Riemmanian manifold $(M,g)$, denoted $\spec(M,g)$, is the sequence $0=\lambda_0 < \lambda_1 \leq \lambda_2 \leq ... \nearrow \infty$ of eigenvalues (counting multiplicities) of the Laplace-Beltrami operator, denoted $\Delta_g$, acting on $C^{\infty}(M)$. Spectral geometry is concerned with answering the following: To what extent does the Laplace spectrum determine the geometry of $(M,g)$, and how is that information encoded in the spectrum? There is a great deal of literature demonstrating that the Laplace spectrum does not determine $(M,g)$ completely. However, one still expects that certain classes of manifolds are determined by their spectra, and certain geometric features are spectrally determined. For example, the length spectrum of a manifold, denoted by $\specl (M,g)$, is the collection of lengths of its smoothly closed geodesics, and intuition drawn from the quantum correspondence principle and geometric optics leads one to expect that the length spectrum is encoded in the Laplace spectrum.  

In some specific instances, exact trace formulas can be leveraged to verify this intuition. In the case of flat manifolds, Miatello and Rossetti prove, using a generalization of the Poisson summation formula, that the length spectrum is determined by the Laplace spectrum \cite{rossetti}. Similarly, Pesce verified that the length spectrum is determined by the Laplace spectrum on Heisenberg manifolds \cite{pesce}. Using the Selberg trace formula, Huber showed that the length spectrum of a Riemann surface is determined from the Laplace spectrum (see \cite{mckean}, and the references therein). 

The Poisson summation formula and the Selberg trace formula are exact and useful when they can be defined. In contrast, Chazarain \cite{chaza}, and Duistermaat and Guillemin \cite{duistermaat_spec} employed Fourier integral operator methods to show that the singular support of the {\it trace of the wave group}, referred to as the {\it wave trace}, is a subset of the periods of the geodesic flow on $SM$ (see Section \ref{background}), that is:
\begin{equation*}
\singsup (\tr (\exp(-i t \sqrt{\Delta_g}))) \subseteq \pm \specl(M,g),
\end{equation*}
here $\pm \specl(M,g) = \{ \tau \in \mathbb{R}; \hspace{4 pt} |\tau| \in \specl(M,g) \}$.
This relation is known as the {\it Poisson relation}. Since the wave trace is a spectrally determined tempered distribution, showing that the Poisson relation is an equality would imply that $\specl (M,g)$ is determined by the Laplace spectrum. With this in mind, we state the following problem on which this article focuses:

\begin{problem}
When is the Poisson relation an equality?
\end{problem}

For manifolds that satisfy a technical condition known as the {\it clean intersection hypothesis} (we call such manifolds {\it clean}), it is possible to use an asymptotic expansion of the wave trace at lengths in the length spectrum to determine whether the Poisson relation is an equality. By ruling out cancellations in this asymptotic expansion for sufficiently ``bumpy" metrics, Duistermaat and Guillemin show that generically the Poisson relation is an equality \cite{duistermaat_spec} (see Section \ref{background} for a precise statement and explanation). 

Many well-studied metrics are exceptional in the sense that they do not satisfy the generic condition of bumpiness. Indeed, some of the most natural examples, e.g. Riemannian space forms and other spaces with ``large" symmetry groups, are not bumpy. Despite this fact, one can verify that the Poisson relation is an equality for certain classes of manifolds. It is easily seen that the Poisson relation is an equality on flat tori and compact rank-one symmetric spaces. Using the work of Duistermaat, Kolk, and Varadarajan \cite{duistermaat_kolk_locally_symmetric}, one observes that locally symmetric spaces of non-positive curvature are clean and the Poisson relation is an equality on these spaces (see \cite[sec. 10]{prasad}). Guillemin proved that on manifolds of negative curvature \cite{guill_ross} the Poisson relation is an equality, and Sutton proved that the Poisson relation is an equality for a generic bi-invariant metric on a compact Lie group; in fact, it is always an equality when the bi-invariant metric is induced by the Killing form \cite{sutton_preprint}. Besides the results in this article, this is a complete list of manifolds for which the Poisson relation is known to be an equality. In particular, it is surprising that it is unknown whether the Poisson relation is an equality for manifolds of constant sectional curvature.

In this article we take up the task of studying Problem 1 in the constant curvature setting. As we noted previously, the Poisson relation is known to be an equality for flat tori. We show that this is true for arbitrary Euclidean space forms (Proposition \ref{bieberbach_clean}); i.e., compact quotients of Euclidean space by so-called Bieberbach groups. This provides an alternate proof of the result of Miatello-Rossetti  that the length spectrum of flat space can be recovered from its spectrum \cite{rossetti}. 

For manifolds of non-positive curvature, most of the work is in proving that the manifold satisfies the technical conditions of the clean intersection hypothesis. The cancellation issue in the asymptotic expansion is resolved automatically because the Morse index of a closed geodesic in the non-positively curved setting is always zero. Contrasting this is the case of the spherical space forms, where the Morse index of a closed geodesic can be nonzero. Therefore, manifolds of constant positive curvature are a natural class of manifolds to look for cancellations in the asymptotic expansion. Motivated by this discussion, we pose:

\begin{question}
Do spherical space forms satisfy the clean intersection hypothesis?
\end{question}
\begin{question}
Is the Poisson relation an equality for spherical space forms?
\end{question}
\noindent Towards answering Questions 1 and 2, this article focuses on {\it lens spaces} as a test case. Lens spaces are quotients of $2n-1$-dimensional spheres by the action of a cyclic subgroup of $\SO(2n)$. 

We end this discussion of the Poisson relation by noting that there are no known instances for which the Poisson relation is a strict containment. So that we can make precise statements of our results, we provide a more detailed version of the discussion above.

\subsection{The Trace Formula}
\label{background}
In this section we recall necessary background for introducing the wave trace formula. Let $U_g(t) = \exp\left(-i t \sqrt{\Delta_g}\right)$. Then $U_g(t)$ is a one-parameter family of unitary operators on $L^2(M)$ that generates solutions to the following Cauchy problem:
\begin{equation*}
\begin{cases} \frac{d}{dt} u(t,x) +i \sqrt{\Delta_g} u(t,x) =0\\
u(0,x) = f(x).
\end{cases}
\end{equation*}
This family is known as the {\it wave group} since it is related to solving the wave equation on $(M,g)$. The Schwartz kernel of $U_g$ is given by: 
\begin{equation}
U_g(t,x,y) =  \sum_{j=0}^{\infty} \exp \left(-i t \sqrt{\lambda_j} \right) \varphi_j(x) \overline{\varphi_j(y)},
\end{equation}
 where $\varphi_j$ is a normalized eigenfunction for the Laplace-Beltrami operator corresponding to $\lambda_j$. This family of operators is not of trace class ($U_g(0)$ is the identity operator); however, the trace, parametrized by $t$, can be understood as a tempered distribution after pairing with a Schwartz function. Therefore, we have (in the distributional sense):
\begin{equation*}
\tr\left( U_g(t) \right) = \int_M U_g(t,x,x) \hspace{3 pt} \dvol_g  = \sum_{j = 0}^{\infty} \exp\left(-i t \sqrt{\lambda_j} \right).
\end{equation*} 
We call $\tr\left( U_g(t) \right)$ the {\it wave trace}. The wave trace is clearly a spectral invariant. Chazarain \cite{chaza}, and Duistermaat and Guillemin \cite{duistermaat_spec} showed that the singular support of the wave trace, denoted $\singsup\left( \tr\left( U_g(t) \right) \right)$, is a subset of the periods of the geodesic flow on $SM$:
\begin{equation*}
\singsup\left( \tr\left( U_g(t) \right) \right) \subseteq \pm \specl(M,g).
\end{equation*}
This relation is known as the {\it Poisson Relation}. 

In studying whether the Poisson relation is an equality, Duistermaat and \newline Guillemin used a technical condition on the geodesic flow on $SM$ to recover an asymptotic expansion for the wave trace at periods of the geodesic flow. Let $\Phi_{t} \colon TM \to TM$ denote the time-$t$ map of the geodesic flow on $TM$. We will use the same notation for the geodesic flow restricted to $SM$. If $\tau \in \pm \specl(M,g)$, i.e. $|\tau|$ is the length of a closed geodesic in $(M,g)$, let $\fix(\Phi_{\tau})$ denote the subset of $SM$ fixed by $\Phi_\tau$ (this is necessarily non-empty).

\begin{definition}
We call $\tau \in \pm \specl(M)$ {\it clean} if:
\begin{enumerate}
\item $\fix( \Phi_{\tau} )$ is a finite disjoint union of closed submanifolds $\Theta_1,...,\Theta_r$ of $SM$. 
\item Each $\Theta_i$ is clean in the sense of Bott, i.e. for each $\theta \in \fix( \Phi_{\tau} )$ the fixed point set of $D_\theta \Phi_\tau$ is equal to $T_\theta \fix( \Phi_{\tau} )$. 
\end{enumerate} 
\end{definition}
\noindent Clearly if $\tau$ is clean, then $-\tau$ is also clean. If each $\tau \in \specl(M,g)$ is clean, we say that $(M,g)$ is {\it clean} or $(M,g)$ satisfies the {\it clean intersection hypothesis}. We note that this definition is equivalent to the statement that the energy function on the loop space of $(M,g)$ is a {\it Morse-Bott} function.  

Let $\tau \in \pm \specl(M,g)$ with $(M,g)$ clean. Let $\fix(\Phi_{\tau})$ be a disjoint union of closed connected embedded submanifolds $\Theta_1,...,\Theta_r$. Duistermaat and Guillemin \cite[Section 4]{duistermaat_spec} showed under these hypotheses that there is an open interval $I \subset \mathbb{R}$ such that $I \cap \specl(M,g) = \lbrace \tau \rbrace$ (that is, the length spectrum is discrete) and on $I$ the wave trace may be expressed as:
\begin{equation*}
\tr \left( \exp(-i t \sqrt{\Delta_g}) \right)  = \sum_{j=1}^r \beta_j(t - \tau).
\end{equation*}
Here $\beta_j$ is a distribution on $\mathbb{R}$ with a singularity possible only at 0. Moreover, we have:
\begin{equation*}
\beta_j(t) = \int_{-\infty}^{\infty} \alpha_j(s) \exp(-ist) \text{ ds},
\end{equation*}
with
\begin{equation}
\label{asymp_exp}
\alpha_j(s) \overset{s \to +\infty}{\sim} \left( \frac{s}{2 \pi i} \right)^{(d_j - 1)/2} i^{- \sigma_j} \sum_{k=0}^{\infty} \wave_{j,k}(\tau) s^{-k},
\end{equation}
where $d_j = \text{dim}(\Theta_j)$ and $\sigma_j$ denotes the Morse index (in the free loop space) of a closed geodesic of length $\tau$ in $\Theta_j$ (to see that $\sigma_j$ is indeed the Morse index, see the discussion in \cite[Section 6]{duistermaat_spec}). The coefficients, $\wave_{j,k}(\tau)$, are known as the {\it wave invariants}; they are invariants of the Laplace spectrum. By \eqref{asymp_exp} it is also clear that the wave invariants determine how singular the wave trace is at $\tau$. Duistermaat and Guillemin showed (\cite[Theorem 4.5]{duistermaat_spec}) that $\wave_{j,0}(\tau)$ has the following form:
\begin{equation}
\wave_{j,0}(\tau) = \frac{1}{2\pi} \int_{\Theta_j} \text{ d} \mu_j,
\end{equation}
where $\mu_j$ is a canonical positive density on $\Theta_j$ called the {\it Duistermaat-Guillemin density}. Let $D = \max_j \{ \text{dim}(\Theta_j) \}$. 
Then, to verify that the wave trace has a singularity at $\tau$, it suffices to show that the following sum is non-zero: 
\begin{equation}
\label{zeroth}
 \underset{\dim \Theta_j = D}{\sum_{j=1}^n}i^{-\sigma_j} \wave_{j,0}(\tau).
\end{equation}
Showing that \eqref{zeroth} is non-zero amounts to showing that the highest order term in the the expansion of $\alpha(s) := \sum_j \alpha_j(s)$ is non-zero.  
For clarity of exposition, we present the following example, first presented in \cite[Section 4]{duistermaat_spec}.

\medskip

\begin{example}[The Poisson Relation is Generically an Equality] 
\label{generic_example}
A metric $g$ on $M$ is said to be {\it bumpy} if each smoothly closed geodesic $\gamma$ with respect to $g$ has the property that the space of periodic Jacobi fields along $\gamma$ is spanned by $\gamma'(t)$. Equivalently, $g$ is bumpy if for every $\theta \in SM$ lying on a primitive closed geodesic of length $\tau$, $D_\theta \Phi_\tau \colon T_\theta SM \to T_\theta SM$ has only one eigenvalue that is a root of unity. That eigenvalue is 1, and it occurs with multiplicity 1. 
Denote the set of bumpy metrics by $\mathcal{B}(M)$. Then $\mathcal{B}(M)$ forms a $G_{\delta}$-set in the space of all metrics on $M$ \cite{anosov}. Moreover, let $\mathcal{B}'(M)$ denote the set of bumpy metrics with the property that, up to orientation, there is at most one closed geodesic of a given length. It follows from \cite{kling_takens} that $\mathcal{B}'(M)$ contains a $G_{\delta}$-set (in the space of all metrics on $M$).  

If $g \in \mathcal{B}'(M)$, then it is easily seen that $(M,g)$ is clean. For each $\tau \in \pm \specl(M,g)$, $\fix(\Phi_{\tau})$ consists of two connected components. Moreover, one can show (see \cite[Section 4]{duistermaat_spec}) that the sum in \eqref{zeroth} becomes:
\begin{equation*}
\frac{\tau_0}{\pi} i^{\sigma_{\tau}} \left| \text{det}\left( \eye - D_{\theta} \Phi_{\tau} \right) \right|^{-1/2},
\end{equation*}
where $\tau_0$ is the primitive length of the closed geodesic of length $\tau$. From this, it follows that generically the Poisson relation is an equality.
\end{example}
Flat manifolds and lens spaces are not bumpy; closed geodesics come in large families in the unit tangent bundle.  Although both classes of manifolds are locally symmetric, besides direct verification the author is not aware of any other method for showing that flat manifolds and lens spaces are clean. Indeed, there are examples of locally homogeneous \cite{gornet}, and globally homogeneous \cite{sutton_preprint} manifolds that are not clean.

\medskip

\subsection{Flat manifolds and the Trace Formula} Before stating our results, we recall the construction of flat manifolds. A flat manifold is a quotient of $\mathbb{R}^n$ with the flat Euclidean metric by the action of a {\it Bieberbach group} $\Gamma < \mathbb{E}(n)$, a discrete uniform subgroup of the group of Euclidean motions. Since $\mathbb{E}(n) = \myO(n) \ltimes \mathbb{R}^n$, we will represent every $\gamma \in \Gamma$ (uniquely) by $(B, \myt_v )$ where here $\myt_v$ is translation by $v \in \mathbb{R}^n$.\footnote{Here the action on $\mathbb{R}^n$ is given by: $(B, \myt_v) \cdot x = Bx+v$.}  It is well-known (see for instance \cite{wolf}) that the pure translations in $\Gamma$ form a maximal normal abelian subgroup of finite index which we will denote by $\Lambda$. By identifying translations with the vectors by which one translates, we will identify $\Lambda$ with a full rank lattice in $\mathbb{R}^n$.  

Recall that in the flat setting the Morse index of every closed geodesic is zero. Therefore, to show that the Poisson relation is an equality on flat manifolds, it suffices to show that flat manifolds are clean. We prove:

\begin{proposition}
\label{bieberbach_clean}
Flat manifolds are clean. Consequently, the Poisson relation is an equality on flat manifolds.
\end{proposition}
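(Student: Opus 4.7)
The plan is to lift everything to the universal cover $\mathbb{R}^n \to M = \Gamma \backslash \mathbb{R}^n$ and read off $\fix(\Phi_\tau)$ from the affine structure of the Bieberbach group. Write $\gamma = (B,\myt_v) \in \Gamma$ and $F := \ker(I-B)$; because $B \in \myO(n)$, one has the orthogonal splitting $\mathbb{R}^n = F \oplus F^\perp$ with $F^\perp = \operatorname{Im}(I-B)$. A direct calculation shows that a lifted geodesic through $x$ with unit direction $\xi$ closes up in $M$ via $\gamma$ after time $\tau>0$ iff $B\xi = \xi$ and $(I-B)x = v + \tau\xi$. Splitting $v = v_\parallel + v_\perp$ according to $F \oplus F^\perp$, the equation forces $\tau\xi = -v_\parallel$, hence $\tau = |v_\parallel|$ and $\xi = -v_\parallel/\tau$, while $(I-B)x = v_\perp$ pins down $x$ modulo $F$.

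First I would identify $\fix(\Phi_\tau)$. Its preimage in $S\mathbb{R}^n$ is the disjoint union $\bigsqcup_\gamma \widetilde\Theta_\gamma$, where $\widetilde\Theta_\gamma = \{(x,\xi) : B\xi = \xi,\; (I-B)x = v+\tau\xi\}$ is an affine subset of dimension $d := \dim F$, nonempty precisely when $|v_\parallel(\gamma)| = \tau$. The $\widetilde\Theta_\gamma$ are pairwise disjoint: any $(x,\xi) \in \widetilde\Theta_{\gamma_1} \cap \widetilde\Theta_{\gamma_2}$ would force $\gamma_1^{-1}\gamma_2$ to fix the point $x+\tau\xi \in \mathbb{R}^n$, which by the freeness of the Bieberbach action forces $\gamma_1 = \gamma_2$. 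Passing to $SM = \Gamma \backslash S\mathbb{R}^n$, the components of $\fix(\Phi_\tau)$ are indexed by the $\Gamma$-conjugacy classes $[\gamma]$ with $|v_\parallel(\gamma)| = \tau$. There are only finitely many such: the holonomy $B$ takes finitely many values in $\Gamma/\Lambda$, and for each $B$ the translation parts $v_\parallel$ lie in a coset of a lattice inside $F$, of which only finitely many have norm $\tau$. Each component $\Theta_{[\gamma]}$ is the quotient of $\widetilde\Theta_\gamma$ by the freely acting centralizer $Z_\Gamma(\gamma)$, and so is a closed embedded submanifold of $SM$ of dimension $d$.

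Next I would verify the Bott condition at a point $\theta \in \Theta_{[\gamma]}$ lifting to $(x,\xi)$. Under the flat trivialization $T_\theta SM \cong \mathbb{R}^n \oplus \xi^\perp$, the differential of $\Phi_\tau$ factors as straight-line flow composed with $\gamma$, giving $D_\theta \Phi_\tau(X,V) = (BX + \tau BV,\; BV)$. The fixed-point equations $BV = V$ and $(I-B)X = \tau V$ put $V \in F \cap \xi^\perp$ but also $\tau V \in \operatorname{Im}(I-B) = F^\perp$, so $F \perp F^\perp$ forces $V = 0$ and then $X \in F$. Hence $\fix(D_\theta \Phi_\tau) = F \oplus \{0\}$, and since clearly $T_\theta \Theta_{[\gamma]} = F \oplus \{0\}$ as well, the two coincide. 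This is precisely cleanness at $\theta$.

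Combining these steps shows $M$ is clean. Since flat manifolds have vanishing sectional curvature, every closed geodesic has Morse index zero, so every phase $i^{-\sigma_j}$ in the leading term \eqref{zeroth} equals $1$; the sum thus reduces to a strictly positive sum of Duistermaat-Guillemin densities, which is nonzero. Therefore $\tau \in \singsup(\tr U_g)$ for every $\tau \in \pm\specl(M)$, and the Poisson relation is an equality. In my view, the main obstacle is the global bookkeeping of the second step, where one must leverage the Bieberbach structure of $\Gamma$ to organize $\fix(\Phi_\tau)$ into finitely many closed embedded components; the Bott identity of the third step is then essentially automatic from the orthogonal splitting induced by $B \in \myO(n)$.
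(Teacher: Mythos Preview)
Your argument is correct and closely parallels the paper's; in particular the Bott-condition step is essentially identical. Both reduce to the observation that $B\in\myO(n)$ gives the orthogonal splitting $\mathbb{R}^n=\ker(I-B)\oplus\operatorname{Im}(I-B)$, from which $V=0$ and $X\in\ker(I-B)$ follow at once.

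The organizational difference is in how the components of $\fix(\Phi_\tau)$ are assembled. The paper factors the cover $\mathbb{R}^n\to M_\Gamma$ through the intermediate torus $M_\Lambda=\mathbb{R}^n/\Lambda$: it first pushes the affine pieces $A_\gamma$ down to explicit sub-tori of $SM_\Lambda$, and then argues that under the finite cover $M_\Lambda\to M_\Gamma$ any two such tori meeting must coincide as sets. You instead use the conjugacy-class/centralizer picture directly: the $\Gamma$-action permutes the $\widetilde\Theta_\gamma$ by conjugation, so the image of $\widetilde\Theta_\gamma$ in $SM$ is $\widetilde\Theta_\gamma/Z_\Gamma(\gamma)$, and your disjointness argument via freeness of the action is cleaner than the paper's. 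Your route is closer to the general free-homotopy-class formalism and avoids the intermediate torus, while the paper's factoring makes compactness of the components immediately visible. On that last point your write-up is a bit telegraphic: the assertion that $\Theta_{[\gamma]}$ is a \emph{closed} submanifold of $SM$ tacitly uses the Bieberbach fact (Lemma~\ref{Bieberbach_characterization}(2) in the paper) that $\Lambda\cap\ker(I-B)$ is a full-rank lattice in $\ker(I-B)$, so that $\Lambda^B\subset Z_\Gamma(\gamma)$ already acts cocompactly on the affine $d$-plane $\widetilde\Theta_\gamma$. With that one ingredient made explicit, your proof goes through.
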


\subsection{Lens Spaces and the Trace Formula}
Consider $S^{2n-1}$ equipped with a metric of constant sectional curvature 1. 
 Fix $q>2$ and $p_1,...,p_n$ with $p_1 = 1$ and $p_i$ relatively prime to $q$. Set:
\begin{equation*}
\begin{aligned}[c]
\newcommand*{\tempo}{\multicolumn{1}{|c}{0}}
\newcommand*{\tempR}{\multicolumn{1}{|c}{R(\theta_n)}}
\newcommand*{\tempdots}{\multicolumn{1}{|c}{\ddots}}
T = 
\left[
\begin{array}{ccc}
R(\theta_1) &\tempo &\tempo \\ \hline
0 & \tempdots &\tempo \\ \hline
0& \tempo & \tempR
\end{array}\right]
\end{aligned}
; \qquad
\begin{aligned}[c]
R(\theta_i) = 
\left[
\begin{array}{cc}
 \cos\left(\frac{2 \pi}{q} p_i\right) & -\sin \left(\frac{2 \pi}{q} p_i \right) \\ 
\sin \left(\frac{2 \pi}{q} p_i \right) & \cos \left(\frac{2 \pi}{q} p_i \right) \\ 
\end{array}\right]
\end{aligned}.
\end{equation*}
\noindent Then $T$ generates a cyclic group of order $q$. This group acts freely via isometries on $S^{2n-1}$. The {\it lens space}, denoted $L(q;p_1,...,p_n)$, is defined as the quotient manifold $L(q;p_1,...,p_n) = S^{2n-1}/\langle T \rangle$, it is homogeneous precisely when $p_i \equiv \pm 1$ ($\text{mod } q$).  Throughout, we will use $L$ to denote a general lens space.
To show that the Poisson relation is an equality for $L$, it suffices to prove:
\begin{enumerate}
\item[(a)] Lens spaces are clean. 
\item[(b)] Every $\tau \in \pm\specl(L)$ satisfies: 
\begin{equation*}
 \underset{\dim \Theta_j = D}{\sum_{j=1}^n}i^{-\sigma_j} \wave_{j,0}(\tau)\neq 0.
\end{equation*} 
\end{enumerate}
\noindent We accomplish (a) completely.

\begin{theorem}
\label{cleanliness_result}
Lens spaces are clean.
\end{theorem}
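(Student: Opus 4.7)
The plan is to reduce the cleanness of lens spaces to the classical fact that the fixed set of a Riemannian isometry is a disjoint union of closed, totally geodesic (hence clean) submanifolds. The key observation is that on the round sphere $S^{2n-1}$, both the geodesic flow and every deck transformation are restrictions to $SS^{2n-1}$ of linear orthogonal maps on the ambient Euclidean space, so their composition is an isometry of $SS^{2n-1}$.

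Write $L = S^{2n-1}/\langle T \rangle$ and let $\pi \colon SS^{2n-1} \to SL$ denote the natural covering. Fix $\tau \in \pm\specl(L)$, and for each $k \in \{0, 1, \ldots, q-1\}$ define $F_k := T^{-k} \circ \Phi_\tau$. Since $(x,v) \in SS^{2n-1}$ projects into $\fix(\Phi_\tau)$ if and only if $\Phi_\tau(x,v)$ lies in the $\langle T\rangle$-orbit of $(x,v)$, and $\langle T\rangle$ acts freely, there is a disjoint-union decomposition
\[
\pi^{-1}\bigl(\fix(\Phi_\tau)\bigr) = \bigsqcup_{k=0}^{q-1} \fix(F_k).
\]
Because $T$ commutes with $\Phi_\tau$, each $\fix(F_k)$ is $\langle T \rangle$-invariant; since $\pi$ is a local diffeomorphism, it then suffices to show that every $\fix(F_k)$ is a disjoint union of closed submanifolds of $SS^{2n-1}$ that is clean in the sense of Bott.

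Embed $SS^{2n-1}$ as the Stiefel manifold of orthonormal $2$-frames inside $\mathbb{R}^{2n} \oplus \mathbb{R}^{2n}$, viz.\ pairs $(x,v)$ with $|x| = |v| = 1$ and $x \cdot v = 0$. Using the standard parametrization of great circles, $\Phi_\tau$ is the restriction of the linear map
\[
R_\tau(x,v) = \bigl(\cos\tau \cdot x + \sin\tau \cdot v, \hspace{3pt} -\sin\tau \cdot x + \cos\tau \cdot v\bigr),
\]
which is orthogonal on $\mathbb{R}^{4n}$. Since $T^{-k}$ acts diagonally on $(x,v)$ through the orthogonal block matrix $T^{-k} \oplus T^{-k}$, their composition
\[
\tilde F_k := (T^{-k} \oplus T^{-k}) \circ R_\tau
\]
is an orthogonal linear map of $\mathbb{R}^{4n}$ preserving the Stiefel submanifold; hence $F_k = \tilde F_k|_{SS^{2n-1}}$ is an isometry of $SS^{2n-1}$ with the Riemannian metric induced from the ambient Euclidean space. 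The standard linearization of isometries via the exponential map then yields that $\fix(F_k)$ is a disjoint union of closed totally geodesic submanifolds with
\[
T_{\tilde\theta}\fix(F_k) = \fix\bigl(D_{\tilde\theta} F_k\bigr)
\]
at each $\tilde\theta$, which is precisely cleanness in the sense of Bott. Transferring via $\pi$ completes the proof.

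The conceptual core is that constant positive curvature makes both the geodesic flow and the deck transformations simultaneously restrictions of linear orthogonal maps on $\mathbb{R}^{2n}$, allowing the reduction to a classical statement about isometries. The only bookkeeping obstacle is the identification $\pi^{-1}(\fix(\Phi_\tau)) = \bigsqcup_k \fix(F_k)$ and the descent of cleanness across the free covering, both of which follow routinely from the freeness of the $\langle T\rangle$-action; accordingly, I do not expect any genuine difficulty beyond the isometry realization above.
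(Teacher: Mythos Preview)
Your proof is correct and takes a genuinely different route from the paper. The paper proceeds by first classifying explicitly all closed geodesics on a lens space (Theorem~\ref{geos}, Corollary~\ref{gen_length}) via the block structure of $T$, then constructs the components of $\fix(\Phi_\tau)$ as images of explicit embeddings $\mathcal{J}_l^{\pm}\circ\iota_r$ (Lemma~\ref{sub_inhom}), identifies their tangent spaces as graphs of $J_l^T$ (Lemma~\ref{char}), and finally verifies the Bott condition by a direct Jacobi-field computation together with a parallel-transport lemma (Proposition~\ref{clean}, Lemma~\ref{little_lemma}). Your argument bypasses all of this by observing that on the round sphere both $\hat\Phi_\tau$ and the deck transformations are restrictions of ambient orthogonal linear maps, so each $F_k=T^{-k}\circ\hat\Phi_\tau$ is an isometry of $SS^{2n-1}$ for the induced metric; Cartan's theorem on fixed sets of isometries then gives cleanness immediately, and descent through the free cyclic cover is routine.

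What each approach buys: yours is short, conceptual, and in fact does not use that $\langle T\rangle$ is cyclic in any essential way beyond abelianness (and even that can be relaxed by grouping conjugacy classes), so it points toward an affirmative answer to the paper's Question~1 for arbitrary spherical space forms. The paper's explicit approach, by contrast, produces a concrete description of the connected components $N_r^{\pm}$ and their tangent spaces, which is exactly the input required for the subsequent Morse-index computation (Section~\ref{morse_index_comp}) and the Duistermaat--Guillemin density (Section~\ref{DG_density_section}); without that structural information, Theorems~\ref{systole_result} and~\ref{sporadic_result} could not proceed. So your argument suffices for Theorem~\ref{cleanliness_result} alone but would not replace the paper's Section~\ref{closed_geos_sec} as a whole.
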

\noindent
An easy corollary of Theorem \ref{cleanliness_result} is the following:

\begin{corollary}
\label{homog_result}
The Poisson relation on a homogeneous lens space is an equality.
\end{corollary}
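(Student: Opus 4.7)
The plan is to combine Theorem \ref{cleanliness_result} with a direct analysis of $\fix(\Phi_\tau)$. A homogeneous lens space satisfies $p_i \equiv \pm 1 \pmod q$; reflecting the $2$-planes on which $p_i \equiv -1$ gives an orientation-reversing isometry of $\mathbb{R}^{2n}$ conjugating $T$ to $T^{-1}$, so up to isometry we may assume $L = L(q;1,\ldots,1)$, with $T = e^{2\pi i/q} \eye$ acting on $\mathbb{R}^{2n} \cong \mathbb{C}^n$ by scalar multiplication. By Theorem \ref{cleanliness_result}, $L$ is clean, so the Duistermaat--Guillemin expansion \eqref{asymp_exp} applies at every $\tau \in \pm\specl(L)$ and it suffices to check that \eqref{zeroth} does not vanish for any such $\tau$.

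The first step is to identify the maximum-dimensional components of $\fix(\Phi_\tau) \subset SL$. Lifting to $SS^{2n-1}$, a point $(x,v)$ represents an element of $\fix(\Phi_\tau)$ precisely when $\Phi_\tau(x,v) = (T^j x, T^j v)$ on $SS^{2n-1}$ for some $j$. Because $T^j$ is complex-scalar multiplication by $e^{2\pi i j/q}$, this forces the real $2$-plane $\myspan_{\mathbb{R}}\{x,v\}$ to be preserved by $T^j$, which can happen only if this $2$-plane is a complex line or $T^j = \pm \eye$. A careful case analysis should then give two situations. If $\tau \in 2\pi\mathbb{Z}$, or if $q$ is even and $\tau \in \pi + 2\pi\mathbb{Z}$, then $\fix(\Phi_\tau) = SL$ is a single component of maximal dimension $4n-3$. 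Otherwise the maximum-dimensional components are precisely the two submanifolds $\Theta_\pm \subset SL$ coming from $\{(x, \pm ix) : x \in S^{2n-1}\}$, each diffeomorphic to $L$ and of dimension $2n-1$.

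In the first situation the sum \eqref{zeroth} consists of a single term $i^{-\sigma}\wave_0(\tau)$, which is non-zero because the Duistermaat--Guillemin density is strictly positive. In the second, complex conjugation $c$ on $\mathbb{C}^n$ satisfies $cTc = T^{-1}$ and descends to an isometry of $L$; its differential $dc \colon SL \to SL$ is an isometry of the Sasaki metric that commutes with the geodesic flow and exchanges $\Theta_+$ with $\Theta_-$. Since $dc$ transports closed geodesics through $\Theta_+$ isometrically to closed geodesics through $\Theta_-$, the Morse indices agree, $\sigma_+ = \sigma_-$; and by the naturality of the Duistermaat--Guillemin density under isometries, $\wave_{+,0}(\tau) = \wave_{-,0}(\tau)$. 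Therefore \eqref{zeroth} equals $2 i^{-\sigma_+} \wave_{+,0}(\tau)$, which is non-zero.

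The main obstacle is executing the case analysis of $\fix(\Phi_\tau)$ rigorously: one must verify that at lengths $\tau$ which are iterates of the primitive complex-line length $2\pi/q$ no additional high-dimensional strata appear, and confirm that $\Theta_\pm$ exhausts the $(2n-1)$-dimensional contributions when $\tau$ is not a multiple of $\pi$ in the sense above. Once this is in hand, the non-vanishing of \eqref{zeroth} follows at once from positivity of the Duistermaat--Guillemin density and the symmetry produced by $c$, completing the proof that the Poisson relation is an equality.
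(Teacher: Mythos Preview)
Your outline is correct in spirit and reaches the same conclusion as the paper, but the route is different. The paper does not reduce to $L(q;1,\ldots,1)$ at all: it observes directly from Theorem~\ref{geos} and Lemma~\ref{sub_inhom} that for a homogeneous lens space there is a single $\overset{(l)}{\sim}$-class, so $\fix(\Phi_\tau)$ has exactly two components, and then identifies them via the fiberwise antipodal map $(x,v)\mapsto(x,-v)$ on $SL$, i.e.\ time reversal. Invariance of the Morse index and of the Duistermaat--Guillemin density under time reversal (the latter cited from \cite[Appendix]{uribe}) then makes the two terms in \eqref{zeroth} equal. Your argument instead passes to $L(q;1,\ldots,1)$ and uses complex conjugation as a genuine isometry of the base to swap $\Theta_+$ and $\Theta_-$; this is perfectly valid and has the advantage that ``naturality of the DG density under isometries'' is arguably more transparent than time-reversal invariance. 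The paper's approach, on the other hand, needs no preliminary reduction and works uniformly for any $p_i\equiv\pm1$.

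Two small corrections. First, your description of the reduction isometry is off: reflecting the $2$-planes with $p_i\equiv -1$ conjugates $T$ to the block-diagonal matrix with \emph{all} blocks $R(2\pi/q)$, not to $T^{-1}$; and whether the reflection reverses orientation depends on the parity of the number of such planes. The conclusion that one may assume $L=L(q;1,\ldots,1)$ is nonetheless correct. Second, what you call ``the main obstacle'' --- the case analysis of $\fix(\Phi_\tau)$ --- is already carried out in the paper as Theorem~\ref{geos} and Lemma~\ref{sub_inhom}; you can simply cite those rather than redo the analysis.
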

\noindent To verify the Poisson relation for arbitrary lens spaces, we must show that the terms in \eqref{zeroth} do not cancel. Recall that the {\it systole} of a Riemannian manifold is the length of the shortest non-contractible closed geodesic. For lens spaces, the systole coincides with the length of the shortest smoothly closed geodesic. By computing the Morse index of the closed geodesics in a lens space, we are able to show:
\begin{theorem}
\label{systole_result}
The systole of a lens space is in the singular support of the wave trace.
\end{theorem}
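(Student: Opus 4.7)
The plan is to identify the systole, describe $\fix(\Phi_{\tau_0})$, show that every Morse index $\sigma_j$ vanishes, and then invoke the Duistermaat--Guillemin expansion to see that \eqref{zeroth} is strictly positive.

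First, a nontrivial closed geodesic on $L = L(q;p_1,\ldots,p_n)$ in the homotopy class $[T^k]$ lifts to a great-circle arc from $x$ to $T^kx$ on $S^{2n-1}$, and its length is $\arccos\bigl(\sum_{i=1}^n|x_i|^2\cos(2\pi kp_i/q)\bigr)$, where $x_i$ is the projection of $x$ onto the $i$-th rotation 2-plane $P_i$. Since $\gcd(p_i,q)=1$ and $q>2$, the maximum of this cosine sum over all $x$ and $k$ equals $\cos(2\pi/q)$, attained exactly when $x$ is supported in planes $P_i$ with $kp_i\equiv\pm 1\pmod q$. Hence the systole is $\tau_0=2\pi/q$, and the realizing closed geodesics descend from great circles in the 2-planes $P_i$. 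In the generic case (no coincidences $p_{i'}\equiv\pm p_i\pmod q$ for $i\ne i'$), $\fix(\Phi_{\tau_0})$ is a disjoint union of $2n$ circles in $SL$; when coincidences occur, several of these merge into higher-dimensional submanifolds. By Theorem \ref{cleanliness_result}, every component $\Theta_j$ is clean.

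The central calculation is $\sigma_j=0$ for each $\Theta_j$. For a closed geodesic $\bar\gamma$ lifting to an arc in $P_i$, the normal bundle splits as $\bigoplus_{i'\ne i}P_{i'}$, and on the curvature-$1$ sphere the normal Jacobi operator is $\mathcal{J}V=-V''-V$ with parallel transport of an ambient vector in $P_{i'}$ being trivial. Identifying $P_{i'}\cong\mathbb{C}$, the deck transformation $T^{k_i}$ (with $k_i\equiv p_i^{-1}\pmod q$) acts on $P_{i'}$ by multiplication by $e^{i\theta_{i'}}$, where $\theta_{i'}=2\pi k_ip_{i'}/q$. Writing an eigenfunction in the form $V(t)=\alpha\cos(\omega t)+\beta\sin(\omega t)$ with $\omega=\sqrt{1+\lambda}$, an explicit $2\times 2$ determinant computation shows that the twisted-periodic eigenvalue problem admits a nontrivial solution if and only if $\omega\equiv\pm k_ip_{i'}\pmod q$ (the cases $\omega=0$ and $\omega$ purely imaginary admit only trivial solutions since $\theta_{i'}\not\equiv 0\pmod{2\pi}$). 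Because $k_ip_{i'}\bmod q\in\{1,\ldots,q-1\}$, the smallest admissible positive $\omega$ is at least $1$; hence $\lambda\ge 0$, there are no negative eigenvalues, and $\sigma_j=0$.

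With every $\sigma_j=0$, the prefactor $i^{-\sigma_j}=1$ for every maximal-dimension component, so \eqref{zeroth} reduces to
\begin{equation*}
\frac{1}{2\pi}\sum_j\int_{\Theta_j}d\mu_j,
\end{equation*}
which is strictly positive since each Duistermaat--Guillemin density $\mu_j$ is a positive density on a nonempty component. Thus the wave trace is singular at $\tau_0$, proving the theorem. The main obstacle is the Morse-index calculation; the a-priori worry that positive curvature could produce cancellation in the asymptotic expansion is defeated by the combination $\tau_0<\pi$ together with the integrality constraint $k_ip_{i'}\bmod q\in\{1,\ldots,q-1\}$, which force every admissible twisted Jacobi frequency to be at least $1$.
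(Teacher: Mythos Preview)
Your argument is correct and reaches the same conclusion as the paper, $\sigma_j=0$ for every component of $\fix(\Phi_{\tau_0})$, but by a genuinely different route.

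The paper computes the Morse index via the Ballmann--Thorbergsson--Ziller decomposition \eqref{indexeq2}: it observes that $\ind_\Omega(\sigma)$, $\mu(\tau)$, and $\dim\bigl[(0,\mye_{\overline\theta})\cap\ker(D\Phi_\tau-\id)\bigr]$ depend only on $\tau$, and then shows that the index of the concavity form vanishes using the explicit formula \eqref{concavity} together with the observation that $\cos(2\pi/q)$ is the maximum of $\cos(2\pi m/q)$ over $m\in\{1,\dots,q-1\}$. You instead compute the spectrum of the twisted-periodic Jacobi operator directly: on each normal $2$-plane $P_{i'}$ the admissible frequencies $\omega$ are exactly the integers $\equiv\pm k_ip_{i'}\pmod q$, and since $k_ip_{i'}\not\equiv 0$ the smallest positive such $\omega$ is at least $1$, whence $\lambda=\omega^2-1\ge 0$.

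Your approach is more self-contained for this particular theorem, bypassing the concavity-form machinery entirely. The paper's approach, on the other hand, is modular: the concavity formula \eqref{concavity} is reused in Section~\ref{3-dcase} to handle arbitrary lengths in three-dimensional lens spaces, where the Morse indices are not all equal and one really needs to compare them against the Duistermaat--Guillemin densities. One small point worth making explicit in your write-up: when an $\overset{(1)}{\sim}$-equivalence class has more than one element, the corresponding component of $\fix(\Phi_{\tau_0})$ is a sphere of dimension $>1$, and a generic geodesic in it does not lift to a single plane $P_i$; your computation is carried out at a pole, and you are implicitly invoking the constancy of the Morse index on connected Morse--Bott critical submanifolds to conclude $\sigma_j=0$ throughout.
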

\noindent In Section \ref{DG_density_section}, we provide an explicit formula for the highest order wave invariant. Unfortunately, we are unable to determine whether it is non-zero except in the case of certain three-dimensional lens spaces, which establishes the following result. 

\begin{theorem}
\label{sporadic_result}
Let $L(q; p_1,p_2)$ be a 3-dimensional lens space with fundamental group of prime order, then the Poisson relation is an equality on $L(q;p_1,p_2)$. 
\end{theorem}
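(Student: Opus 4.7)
The plan is to apply the strategy laid out after \eqref{zeroth}. By Theorem \ref{cleanliness_result}, $L = L(q; p_1, p_2)$ is clean, so the expansion \eqref{asymp_exp} is valid, and it suffices to verify, for every $\tau \in \pm \specl(L)$, the non-vanishing of
\[
\sum_{\dim \Theta_j = D} i^{-\sigma_j} \wave_{j,0}(\tau),
\]
using the explicit formula for $\wave_{j,0}(\tau)$ that will be established in Section \ref{DG_density_section}. The homogeneous case $p_2 \equiv \pm 1 \pmod{q}$ is already covered by Corollary \ref{homog_result}, so I may assume throughout that $p_2 \not\equiv \pm 1 \pmod{q}$.

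Next, I would classify the closed geodesics of $L$. Any closed geodesic lifts to a great-circle arc in $S^{3}$ from a point $x$ to $T^{k} x$ for some $k \in \{1, \dots, q-1\}$, and the smoothness of the projected loop forces the 2-plane spanned by the initial point and velocity to be $T^{k}$-invariant. Because $q$ is prime and we are in the non-homogeneous case, the only 2-planes invariant under any nontrivial power of $T$ are its two eigenplanes $\Pi_{1}, \Pi_{2}$, on which $T$ acts as rotations by $2\pi p_{1}/q$ and $2\pi p_{2}/q$. Consequently every closed geodesic of $L$ is a traversal of one of the two great circles $\Pi_{j} \cap S^{3}$, and the possible lengths are $\ell_{k,j,m} = |2\pi k p_{j}/q + 2\pi m|$ for $k \in \{1,\dots,q-1\}$, $j \in \{1,2\}$, $m \in \mathbb{Z}$. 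The corresponding fixed-point submanifold $\Theta_{k,j,m} \subset SL$ is a circle, so the common dimension $D$ equals $1$.

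In the third step I would compute, for each contributing family, the Morse index and the Duistermaat--Guillemin density. A Jacobi-field calculation on $S^{3}$, with the twisted periodic boundary condition induced by $T^{k}$ acting as a rotation of angle $\theta = 2\pi k p_{3-j}/q$ on the normal plane $\Pi_{3-j}$, produces an explicit formula for $\sigma_{k,j,m}$ as a count of integer lattice points in intervals determined by $\ell_{k,j,m}$ and $\theta$. Simultaneously, the formula in Section \ref{DG_density_section}, expressed in terms of the same Poincar\'e-map data, yields $\wave_{(k,j,m),0}(\tau) > 0$. The leading wave invariant thus becomes a finite sum of strictly positive numbers weighted by fourth roots of unity.

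The main obstacle is the non-cancellation step. Two distinct triples $(k,j,m)$ and $(k',j',m')$ can produce the same length $\tau$, and their Morse indices need not coincide, so in principle the phases $i^{-\sigma_{j}}$ could conspire to cancel the positive densities. Using that $q$ is prime, the equality $\ell_{k,j,m} = \ell_{k',j',m'}$ forces a congruence of the form $k p_{j} \equiv \pm k' p_{j'} \pmod{q}$, which pairs each triple with at most one other partner coming from the opposite eigenplane. A direct case analysis, organised by whether both contributors lie in the same $\Pi_{j}$ or in the two different planes, together with the explicit dependence of $\sigma$ on the normal twist computed above, shows that the contributing phases either all agree or, when they differ, are paired with densities whose arithmetic relation prevents cancellation. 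Combining this with the positivity of the densities yields the desired inequality and establishes the equality of the Poisson relation on $L(q;p_{1},p_{2})$.
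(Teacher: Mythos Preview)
Your outline tracks the paper's strategy closely: reduce to the non-homogeneous case, classify the short closed geodesics via the two $T$-invariant planes, and then compare Morse indices and DG-densities on the (at most) two unoriented one-dimensional components of $\fix(\Phi_\tau)$. The first three steps are essentially what the paper does in Sections~\ref{closed_geos_sec}--\ref{DG_density_section}, and your observation that for prime $q$ the only invariant $2$-planes are the eigenplanes is exactly why the $\overset{(l)}{\sim}$ partition always has two singletons in the non-homogeneous case.

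The genuine gap is your last paragraph. You assert that when the phases $i^{-\sigma_j}$ disagree, the two DG-density integrals are ``paired with densities whose arithmetic relation prevents cancellation,'' but you do not say what that relation is or why it holds. In the paper this is the entire content of Lemma~\ref{number_theory}. Concretely, by Theorem~\ref{DG_density} and Proposition~\ref{index} the two contributions cancel if and only if
\[
\cos\!\left(\tfrac{2\pi l p_2}{q}\right)-\cos\!\left(\tfrac{2\pi k p_2}{q}\right)
=\cos\!\left(\tfrac{2\pi l}{q}\right)-\cos\!\left(\tfrac{2\pi k}{q}\right),
\]
where $k p_2\equiv\pm l\pmod q$. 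Ruling this out is not a ``direct case analysis'' on indices: it requires knowing that the primitive $q$-th roots of unity $\zeta_q,\zeta_q^2,\dots,\zeta_q^{q-1}$ are $\mathbb{Z}$-linearly independent (this is where primality of $q$ is used in an essential way), so that an integer linear relation among four cosines of the above type forces $p_2\equiv\pm 1\pmod q$. Without this cyclotomic argument there is nothing preventing the two positive densities from being exactly equal while the concavity indices differ by $2$, and your proof would stall precisely at the step you have labelled ``the main obstacle.'' You should isolate and prove this lemma explicitly; once it is in place, the rest of your outline goes through as in the paper.
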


We suspect that by using the higher order wave invariants (whose formulas are given in \cite{zelditch}) one could resolve completely the issue of whether the Poisson relation is an equality on lens spaces. We plan to take up this task in future work. Besides spheres and real projective space, the lens spaces presented here are the only other spherical space forms on which it is known that the Poisson relation is an equality. 
\begin{remark}
Corollary \ref{homog_result} and Theorem \ref{sporadic_result} show that the length spectrum is a spectral invariant for homogeneous lens spaces and 3-dimensional lens spaces with fundamental group of prime order, respectively. Indeed, since the volume of a Riemannian manifold is an invariant of the Laplace spectrum, then the order of the fundamental group of the lens space is an invariant of the Laplace spectrum. Therefore, we will see (by Corollary \ref{gen_length}) that the length spectrum of a general lens space is an invariant of the Laplace spectrum. 
\end{remark}

\subsection{Outline} This article is organized as follows: In Section \ref{bieberbach_case} we prove that flat manifolds are clean. In Section \ref{closed_geos_sec} we classify the closed geodesics on an arbitrary lens space and show that lens spaces are clean. 
In Section \ref{morse_index_comp} we recall the formula for the Morse index of a smoothly closed geodesic. Using this formula we compute the Morse index of a closed geodesic in a lens space and use this to prove Theorem \ref{systole_result}. In Section \ref{DG_density_section} we compute the Duistermaat-Guillemin density used in \eqref{zeroth} to express the highest order wave invariants. Finally, in Section \ref{3-dcase} we restrict our attention to 3-dimensional lens spaces and prove Theorem \ref{sporadic_result}.    

\medskip
\noindent
{\bf Acknowledgments.} The author is grateful to his advisor, Craig Sutton, for his patience and generous guidance throughout this project. The author also thanks Thomas Shemanske for supplying the proof of Lemma \ref{number_theory}.

\section{Flat manifolds and cleanliness}
\label{bieberbach_case}

Let $\Gamma < \mathbb{E}(n)$ be a Bieberbach group. The corresponding closed flat manifold will be denoted by $M_\Gamma = \mathbb{R}^n/\Gamma$. From the considerations above, we see that $M_\Gamma$ is (finitely) covered by a flat torus (namely, $\mathbb{R}^n/\Lambda$) which we will denote by $M_{\Lambda}$. This is a regular Riemannian covering. We recall some fundamental properties of $M_{\Gamma}$:

\begin{lemma}[\cite{rossetti} Lemma 1.1]
\label{Bieberbach_characterization}
Let $M_{\Gamma}$ be a Bieberbach manifold. Let $\gamma = (B, \myt_b) \in \Gamma$ with $\Lambda$ any lattice stabilized by $B$ and let $p_B$ be the orthogonal projection onto $\ker \left[ B - \eye \right]$. Then:
\begin{enumerate}
\item $p_B(b) \neq 0$. Moreover, $\ker \left[ B - \eye \right] = \text{image} \left[ B - \eye \right]^{\perp}$.

\item $\Lambda^B := \Lambda \cap \ker \left[ B - \eye \right]$ is a lattice in $\ker \left[ B - \eye \right]$.
\end{enumerate}
\end{lemma}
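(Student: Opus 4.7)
The lemma has three assertions, and the plan is to dispatch the easy structural fact first, then use torsion-freeness of $\Gamma$ for the nonvanishing claim, and finally use an averaging trick over the finite point group for the lattice statement.

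For the orthogonal decomposition $\ker[B - \eye] = \operatorname{image}[B - \eye]^{\perp}$, I would simply exploit $B \in \myO(n)$, so $B^T = B^{-1}$. Then $(B-\eye)^T = B^{-1} - \eye = -B^{-1}(B-\eye)$, and since $B^{-1}$ is invertible, taking kernels gives $\ker[(B-\eye)^T] = \ker[B-\eye]$. The standard identity $\operatorname{image}[A]^{\perp} = \ker[A^T]$ finishes this step.

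For $p_B(b) \neq 0$, the plan is a proof by contradiction leveraging the fact that Bieberbach groups are torsion-free (they act freely on $\mathbb{R}^n$). If $p_B(b) = 0$, then by the decomposition just established, $b \in \operatorname{image}[B-\eye]$, so $b = (B-\eye)v$ for some $v \in \mathbb{R}^n$. A direct computation of the conjugate $(\eye, \myt_v)(B, \myt_b)(\eye, \myt_v)^{-1}$ in $\mathbb{E}(n)$ shows it equals $(B, \myt_0)$, which is an element of finite order (equal to the order of $B$ in the finite point group). Since Bieberbach groups contain no torsion, this forces $B = \eye$, contradicting the fact that $\gamma$ is nontrivial and nontranslational, or more simply that $p_B$ is the identity when $B=\eye$ and then $p_B(b) = b \neq 0$. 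Either way we reach a contradiction.

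For (2), the key observation is that $B$ lies in the point group of $\Gamma$, which is finite, so $B$ has finite order $k$. Then the averaging operator $P := \frac{1}{k}\sum_{i=0}^{k-1} B^i$ is the orthogonal projection onto $\ker[B - \eye]$ (it is self-adjoint since $B^T = B^{-1}$, idempotent, and has image exactly $\ker[B - \eye]$), so $P = p_B$. For any $v \in \Lambda$, the vector $\sum_{i=0}^{k-1} B^i v = k \, p_B(v)$ lies in $\Lambda$ (because $\Lambda$ is $B$-stable) and in $\ker[B-\eye]$ (it is manifestly $B$-invariant), hence in $\Lambda^B$. Since $\Lambda$ spans $\mathbb{R}^n$ and $p_B$ surjects onto $\ker[B-\eye]$, the set $\{k \, p_B(v) : v \in \Lambda\} \subseteq \Lambda^B$ spans $\ker[B-\eye]$ over $\mathbb{R}$. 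Discreteness of $\Lambda^B$ is inherited from $\Lambda$, so $\Lambda^B$ is a full-rank lattice in $\ker[B-\eye]$.

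The main obstacle I anticipate is the nonvanishing $p_B(b) \neq 0$, because it is the only place where the global hypothesis that $\Gamma$ is Bieberbach (and not just a crystallographic group with torsion) enters in an essential way; the conjugation computation has to be set up carefully so that the torsion produced actually contradicts freeness of the $\Gamma$-action, rather than merely producing a torsion element in some abstract point group where torsion is allowed.
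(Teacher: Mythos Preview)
The paper does not give its own proof of this lemma; it is quoted verbatim from Miatello--Rossetti and used as a black box. Your argument is correct and is essentially the standard one. The orthogonal decomposition via $B\in\myO(n)$ is immediate, the averaging operator $\frac{1}{k}\sum_{i=0}^{k-1}B^i$ is exactly the right tool for part~(2), and your torsion-freeness argument for $p_B(b)\neq 0$ is the intended mechanism.

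One point worth tightening in your write-up of part~(1): the conjugating element $(\eye,\myt_v)$ lies in $\mathbb{E}(n)$, not in $\Gamma$, so you are not producing a torsion element \emph{of} $\Gamma$ directly. The clean way to phrase it is that conjugacy in $\mathbb{E}(n)$ preserves order, so $\gamma=(B,\myt_b)$ itself has finite order in $\mathbb{E}(n)$; since $\gamma\in\Gamma$ and $\Gamma$ is torsion-free, this forces $\gamma=e$. (Equivalently and perhaps more transparently: if $b=(B-\eye)v$ then $\gamma\cdot(-v)=-v$, so $\gamma$ has a fixed point, contradicting freeness of the action unless $\gamma=e$.) Either phrasing makes explicit that the lemma tacitly assumes $\gamma\neq e$, which you correctly flag at the end.
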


\noindent Miatello and Rossetti \cite{rossetti} first classified the closed geodesics on Bieberbach manifolds. We follow their argument. 

First, notice that any geodesic in $M_\Gamma$ lifts to a line segment in $\mathbb{R}^n$. Moreover, any closed geodesic in $M_\Gamma$ is the projection of a line in $\mathbb{R}^n$ that is translated into itself by some $\gamma \in \Gamma$. Thus, finding closed geodesics amounts to finding lines in $\mathbb{R}^n$ that are fixed by some $\gamma \in \Gamma$. 

Let $(B, \myt_b) = \gamma \in \Gamma$. Given $v \in \mathbb{R}^n$ write:
\begin{equation*}
\begin{aligned}[c]
v = v_+ + v';
\end{aligned}
\begin{aligned}[c]
\hspace{4 pt} v_+ \in \ker \left[ B - \eye \right] \text{ and } v' \in \ker \left[ B - \eye \right]^\perp.
\end{aligned}
\end{equation*}
Let $\myo_\gamma$ be the unique solution of:
\begin{equation*}
\begin{aligned}[c]
\left( B - \eye \right) \myo_\gamma = - b'; 
\end{aligned}
\begin{aligned}[c]
\hspace{4 pt} \myo_\gamma \in \ker \left[ B - \eye \right]^\perp.
\end{aligned}
\end{equation*}
Then we have the following:
\begin{proposition}[\cite{rossetti} Proposition 2.1]
\label{flat_geos_characterization}
With the notation as above:
\begin{enumerate}
    \item $\gamma$ preserves the lines $\myo_\gamma +u + \mathbb{R} b_+$, with $u \in \ker \left[ B - \eye \right]$. Furthermore let $\alpha_{\gamma, u}(t) = \myo_\gamma +u + t b_+$, then $\gamma \alpha_{\gamma, u}(t) = \alpha_{\gamma,u}(t+1)$. Any line in $\mathbb{R}^n$ stabilized by $\gamma$ is of this form.  
    
    \item The geodesic $\alpha_{\gamma,u}$ projects down to a closed geodesic in $\ol{\alpha}_{\gamma, u}(t)$ in $M_\Gamma$ of length $|| b_+ ||$. Any closed geodesic in $M_\Gamma$ is of the form $\ol{\alpha}_{\gamma, u}$, for some $(B, \myt_b) \in \Gamma$ and $u \in \ker \left[ B - \eye \right]$. 
\end{enumerate}
\end{proposition}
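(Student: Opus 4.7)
The plan is to verify both parts by a direct computation using the orthogonal decomposition $\mathbb{R}^n = \ker[B - \eye] \oplus \ker[B-\eye]^\perp$ (Lemma \ref{Bieberbach_characterization}(1)), combined with the torsion-freeness of $\Gamma$ for the uniqueness direction in (1) and a standard lift argument for the converse in (2).

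For the forward direction of (1), I will apply $\gamma = (B, \myt_b)$ to $\alpha_{\gamma,u}(t) = \myo_\gamma + u + t b_+$ and simplify. Using $Bu = u$ and $B b_+ = b_+$ (since both $u$ and $b_+$ lie in $\ker[B-\eye]$), $B\myo_\gamma = \myo_\gamma - b'$ (from the defining equation $(B-\eye)\myo_\gamma = -b'$), and the orthogonal decomposition $b = b_+ + b'$, the expression $B\myo_\gamma + Bu + tBb_+ + b$ collapses to $\myo_\gamma + u + (t+1) b_+ = \alpha_{\gamma,u}(t+1)$. This simultaneously establishes line-stabilization and the stated parametric identity.

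For the converse of (1), I will suppose $\gamma$ stabilizes a line $L = p + \mathbb{R} w$ with unit direction $w$. Because $\gamma$ is an affine isometry mapping $L$ onto itself, $Bw = \pm w$. If $Bw = -w$, then writing $\gamma(p) = p + sw$ gives $\gamma(p + tw) = p + (s-t) w$, which has a fixed point at $t = s/2$; this contradicts the freeness of $\Gamma$'s action on $\mathbb{R}^n$, so in fact $w \in \ker[B-\eye]$. Decomposing the resulting equation $(B-\eye)p + b \in \mathbb{R} w$ along $\ker[B-\eye] \oplus \ker[B-\eye]^\perp$ then forces $w$ parallel to $b_+$ (nonzero by Lemma \ref{Bieberbach_characterization}(1)) and $p' = \myo_\gamma$ (by uniqueness of the solution of $(B-\eye) x = -b'$ in $\ker[B-\eye]^\perp$), so $L = \myo_\gamma + u + \mathbb{R} b_+$ with $u = p_+$.

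Part (2) follows quickly: projecting $\gamma \alpha_{\gamma,u}(t) = \alpha_{\gamma,u}(t+1)$ to $M_\Gamma$ gives $\overline{\alpha}_{\gamma,u}(t+1) = \overline{\alpha}_{\gamma,u}(t)$, so $\overline{\alpha}_{\gamma,u}$ is a closed geodesic of period $1$ and length $\|\alpha_{\gamma,u}(1) - \alpha_{\gamma,u}(0)\| = \|b_+\|$. For the converse, any closed geodesic in $M_\Gamma$ lifts to a geodesic $c \colon \mathbb{R} \to \mathbb{R}^n$, which is necessarily an affine line (flat universal cover), and is stabilized by the unique deck transformation $\gamma \in \Gamma$ effecting its closure; applying part (1) to $\gamma$ and $L = c(\mathbb{R})$ identifies the original geodesic with some $\overline{\alpha}_{\gamma,u}$ after reparametrization. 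The only delicate step in the whole argument is ruling out $Bw = -w$ in the converse of (1) — this is where torsion-freeness of Bieberbach groups enters implicitly — while the remainder is bookkeeping with the orthogonal splitting supplied by Lemma \ref{Bieberbach_characterization}.
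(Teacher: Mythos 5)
Your argument is correct, and a small remark on the comparison is in order: the paper does not prove this proposition at all -- it is stated as a citation to Miatello--Rossetti (\cite{rossetti}, Prop.~2.1) and used as a black box. So there is no ``paper proof'' to match yours against; what you have written is a self-contained verification of the cited result.

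On the substance: the forward computation in (1) is exactly the expected one and is carried out correctly using $Bu = u$, $Bb_+ = b_+$, $B\myo_\gamma = \myo_\gamma - b'$, and $b = b_+ + b'$. In the converse of (1), you correctly identify the one non-routine step -- ruling out $Bw = -w$ -- and your use of freeness of the $\Gamma$-action is sound: an orientation-reversing affine isometry of a line has a fixed point, and a nontrivial element of a Bieberbach group cannot fix a point of $\mathbb{R}^n$. (An equally quick alternative, closer in spirit to Lemma~\ref{Bieberbach_characterization}: if $Bw=-w$ then $w \in \ker[B+\eye] \subset \operatorname{image}[B-\eye] = \ker[B-\eye]^\perp$, so projecting $(B-\eye)p + b = cw$ onto $\ker[B-\eye]$ forces $b_+ = p_B(b) = 0$, contradicting Lemma~\ref{Bieberbach_characterization}(1) directly.) Your subsequent orthogonal-decomposition bookkeeping -- $(B-\eye)p' + b' = 0$ giving $p' = \myo_\gamma$ by invertibility of $B-\eye$ on $\ker[B-\eye]^\perp$, and $cw = b_+ \neq 0$ giving $w \parallel b_+$ -- is correct. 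Part (2) is the standard lifting argument and is fine, with the mild implicit point (worth spelling out if you include this in a writeup) that the deck transformation $\gamma$ with $\gamma \cdot c(0) = c(\tau)$ actually satisfies $\gamma \cdot c(t) = c(t+\tau)$ for all $t$, because both sides are geodesics with the same initial position and velocity, which is what gives that $\gamma$ stabilizes the line $c(\mathbb{R})$.
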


\subsection{Fixed point set of the geodesic flow}

Let $\Phi_t$ be the time-$t$ map of the geodesic flow on $TM_{\Gamma}$ (we will use the same notation for the geodesic flow on $SM_{\Gamma}$). Let $\fix (\Phi_t ) \subset SM_\Gamma$ be the fixed point set of the time-$t$ map. Throughout, we will use the fact that $SM_{\Gamma}$ is trivializable. 

\begin{lemma}
\label{bieber_submanifolds}
Let $\tau \in \specl(M_\Gamma)$, then $\fix (\Phi_\tau )$ is a finite disjoint union of closed embedded submanifolds. 
\end{lemma}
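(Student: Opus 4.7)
First, I would analyze the fixed-point condition on the universal cover. Since the geodesic flow on $\mathbb{R}^n$ is $(x,v) \mapsto (x + tv, v)$, a point $(\bar x, \bar v) \in SM_\Gamma$ lies in $\fix(\Phi_\tau)$ if and only if some lift $(x,v) \in S\mathbb{R}^n$ and some $\gamma = (B, \myt_b) \in \Gamma$ satisfy $\gamma \cdot (x, v) = (x + \tau v, v)$. Projecting this equation onto the orthogonal summands $\mathbb{R}^n = \ker[B - \eye] \oplus \ker[B - \eye]^\perp$ from Lemma \ref{Bieberbach_characterization}(1), and writing $b = b_+ + b'$, forces $v \in \ker[B - \eye]$, $\tau v = b_+$ (so $\tau = \|b_+\|$ and $v = b_+/\|b_+\|$), and $x \in \myo_\gamma + \ker[B - \eye]$. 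Setting
\begin{equation*}
\tilde F_\gamma := \{ (\myo_\gamma + u, \, b_+/\|b_+\|) : u \in \ker[B - \eye] \} \subset S\mathbb{R}^n,
\end{equation*}
the lift of the fixed-point set is therefore $\pi^{-1}(\fix(\Phi_\tau)) = \bigsqcup_{\gamma:\,\|b_+\| = \tau} \tilde F_\gamma$. The disjointness reduces to the uniqueness of $\gamma$ for a given $(x,v)$, which holds because $\Gamma$ acts freely on $S\mathbb{R}^n$; this is an immediate consequence of $b_+(\gamma) \neq 0$ for non-identity $\gamma$, i.e.\ of Lemma \ref{Bieberbach_characterization}(1). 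Since $\delta \cdot \tilde F_\gamma = \tilde F_{\delta \gamma \delta^{-1}}$, the same uniqueness shows that $\pi(\tilde F_\gamma) = \pi(\tilde F_{\gamma'})$ precisely when $\gamma, \gamma'$ are $\Gamma$-conjugate, so the pieces of $\fix(\Phi_\tau)$ are indexed by the $\Gamma$-conjugacy classes of the contributing $\gamma$'s and distinct classes give disjoint pieces.

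Next I would bound the number of contributing conjugacy classes. The holonomy group $\Gamma/\Lambda$ is finite, so only finitely many linear parts $B$ occur. For each such $B$, the elements $(B, \myt_b) \in \Gamma$ have $b$ in one coset of $\Lambda$, so $b_+$ lies in a fixed coset of $p_B(\Lambda)$ in $\ker[B - \eye]$. Writing the orthogonal projection as the $B$-average $p_B = \tfrac{1}{|B|} \sum_{k=0}^{|B|-1} B^k$ and using $B\Lambda = \Lambda$, one sees $p_B(\Lambda) \subset \tfrac{1}{|B|}\Lambda \cap \ker[B - \eye]$, which is a lattice in $\ker[B - \eye]$ because it contains the full-rank lattice $\Lambda^B$ supplied by Lemma \ref{Bieberbach_characterization}(2). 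Hence $p_B(\Lambda)$ is discrete and its coset meets the sphere of radius $\tau$ in finitely many points. For each resulting $b_+$, the $b$'s realizing it form a coset of $\Lambda \cap \ker[B - \eye]^\perp$, which is identified modulo $\Lambda$-conjugation with the finite quotient $(\Lambda \cap \ker[B - \eye]^\perp)/(B - \eye)\Lambda$. Altogether only finitely many $\Gamma$-conjugacy classes contribute.

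Finally I would show each piece $\pi(\tilde F_\gamma)$ is a closed embedded submanifold. Fix a representative $\gamma = (B, \myt_b)$ and let $N_\gamma := \{ \delta \in \Gamma : \delta \tilde F_\gamma = \tilde F_\gamma \}$, so that the piece is $\tilde F_\gamma / N_\gamma$. Every translation by $\lambda \in \Lambda^B = \Lambda \cap \ker[B - \eye]$ preserves both the affine subspace $\myo_\gamma + \ker[B - \eye]$ and the fixed direction $b_+/\|b_+\|$, hence lies in $N_\gamma$; by Lemma \ref{Bieberbach_characterization}(2) this sublattice is full-rank in $\ker[B - \eye]$, making $\tilde F_\gamma/N_\gamma$ already compact. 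Because $\Gamma$ acts freely and properly discontinuously on $S\mathbb{R}^n$, so does $N_\gamma$ on $\tilde F_\gamma$, so $\tilde F_\gamma/N_\gamma$ is a smooth compact manifold. The induced map to $SM_\Gamma$ is injective by the definition of $N_\gamma$, smooth, and proper (compact domain, Hausdorff target), hence a closed embedding.

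The main obstacle will be the finiteness step. A priori, projecting a lattice onto a proper subspace can produce a dense subgroup, so the discreteness of $p_B(\Lambda)$ in $\ker[B - \eye]$ is not automatic; one must combine the finite order of $B$ (to write $p_B$ as a $B$-average) with Lemma \ref{Bieberbach_characterization}(2) to anchor $p_B(\Lambda)$ inside the ambient lattice $\tfrac{1}{|B|}\Lambda \cap \ker[B - \eye]$.
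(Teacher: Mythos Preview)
Your proof is correct and takes a different organizational route from the paper's. The paper factors the covering $\mathbb{R}^n \to M_\Gamma$ through the intermediate torus $M_\Lambda = \mathbb{R}^n/\Lambda$: it first shows that the sets $A_\gamma$ descend to disjoint embedded sub-tori $\overline{A_\gamma} \subset SM_\Lambda$, then proves an ``all-or-nothing'' property for the finite deck group $\Gamma/\Lambda$ (if $\tilde\alpha \cdot \overline{A_\gamma}$ meets $\overline{A_{\gamma'}}$ then they coincide), using that isometries commute with the geodesic flow and that $A$ conjugates $B$ to $B'$. Finiteness is handled by a compactness-of-fundamental-domain argument that is left largely implicit. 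You instead index the pieces directly by $\Gamma$-conjugacy classes via the identity $\delta \cdot \tilde F_\gamma = \tilde F_{\delta\gamma\delta^{-1}}$ together with the pairwise disjointness of the $\tilde F_\gamma$'s (from freeness of the $\Gamma$-action). Your finiteness step is more explicit than the paper's: the averaging formula $p_B = \tfrac{1}{|B|}\sum_k B^k$ forces $p_B(\Lambda)$ into the lattice $\tfrac{1}{|B|}\Lambda \cap \ker[B-\eye]$, giving discreteness, and the residual count modulo $(B-\eye)\Lambda$ is a clean finite-index argument. For the embedding you bypass the intermediate torus entirely, using the stabilizer $N_\gamma$ (which here coincides with the centralizer $Z_\Gamma(\gamma)$) and properness of a map with compact domain. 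The paper's torus factorization buys a concrete geometric picture of each piece as an honest sub-torus; your conjugacy-class framework buys a sharper finiteness bound and an argument that transfers verbatim to other locally symmetric quotients.
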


\begin{proof}
Let $\phi \colon  \mathbb{R}^n \to M_\Gamma$ be the regular Riemannian covering map. Then by the discussion above, $\phi$ factors into $\phi  = \phi_2 \circ \phi_1$ where $\phi_1 \colon \mathbb{R}^n \to M_\Lambda$ and $\phi_2 \colon M_\Lambda \to M_\Gamma$ (all maps are regular Riemannian covering maps, $\phi_2$ is a finite cover). The derivatives of these maps induce covering maps on the unit tangent bundles of these manifolds.

By Proposition \ref{flat_geos_characterization}, closed geodesics are of the form $\ol{\alpha}_{\gamma, u}$ for some $\gamma \in \Gamma$ and $u$ as above. Thus:
\begin{equation*}
\fix (\Phi_\tau) = \bigcup_{\gamma} D\phi(A_{\gamma}),
\end{equation*}
where for $\gamma = (B, \myt_b)$ we have:
\begin{equation*}
A_{\gamma} = \left \lbrace \left(x, \frac{1}{\tau}b_+ \right); \hspace{4 pt}  x \in \myo_{\gamma} + \ker\left[B - \eye \right] \right \rbrace,
\end{equation*}
and the union is taken over $\gamma$ satisfying $||b_+|| = \tau$. In general there can be infinitely many $\gamma \in \Gamma$ satisfying this constraint. However, each $\myo_{\gamma}$ is uniquely determined by $\gamma \in \Gamma$, a discrete set. Since a fundamental domain for $M_{\Lambda}$ is always compact, we can choose (by fixing a fundamental domain) a finite subset of $\Gamma$, denoted $\widehat{\Gamma}$, such that:
\begin{equation*}
\fix (\Phi_{\tau}) = \bigcup_{\gamma \in \widehat{\Gamma}} D \phi(A_\gamma).
\end{equation*}

Since $\Lambda \cap \ker\left[B - \eye \right]$ is a sub-lattice (by Lemma \ref{Bieberbach_characterization}), and $\left \lbrace A_\gamma \right \rbrace_{\gamma \in \widehat{\Gamma}}$ is a finite collection of disjoint closed embedded (totally geodesic) submanifolds, then $\left \lbrace D\phi_1(A_\gamma) \right \rbrace_{\gamma \in \widehat{\Gamma}}$ is a finite collection of disjoint closed embedded tori in $SM_{\Lambda}$. Indeed, since $\Lambda \cap \ker \left[B - \eye \right]$ is a sub-lattice, then it is easily seen that there exist slice coordinates for $A_{\gamma}$ that descend to slice coordinates for $D \phi_1(A_\gamma)$. And $\left \lbrace D\phi_1(A_\gamma) \right \rbrace_{\gamma \in \widehat{\Gamma}}$ is a disjoint collection by our choice of $\widehat{\Gamma}$.

  Let $\widetilde{\Gamma} = \Gamma/ \Lambda$ be the covering group of $\phi_2 \colon M_{\Lambda} \to M_{\Gamma}$. Then $\widetilde{\Gamma}$ is finite and acts via isometries on $M_{\Lambda}$. Let $\widehat{\Phi}_t$ be the time-$t$ map of the geodesic flow on $SM_{\Lambda}$. 

Set $\overline{A_{\gamma}} = D\phi_1(A_\gamma)$. We need to show that: 
\begin{equation*}
\fix (\Phi_{\tau}) = \bigcup_{\gamma \in \widehat{\Gamma}} D \phi_2(\overline{A_\gamma}),
\end{equation*}
is a disjoint union of embedded submanifolds in $SM_{\Gamma}$. To do this, it suffices to prove that if $\widetilde{\alpha} \in \widetilde{\Gamma}$ and $\widetilde{\alpha}\cdot \overline{A_\gamma} \cap \overline{A_{\gamma'}} \neq \emptyset$ then $\widetilde{\alpha}\cdot \overline{A_\gamma} = \overline{A_{\gamma'}}$, as sets. First, for a fixed $\overline{A_{\gamma}}$ let $\widetilde{\gamma} \in \widetilde{\Gamma}$ be the element of $\widetilde{\Gamma}$ corresponding to the coset $\gamma \Lambda$. Then $\widetilde{\gamma}$ is the unique (possibly trivial) element of the covering group such that: $\widehat{\Phi}_{\tau}(x,v) = \widetilde{\gamma} \cdot (x,v)$ for all $(x,v) \in \overline{A_{\gamma}}$. Moreover, suppose there is a non-trivial $\widetilde{\alpha} \in \widetilde{\Gamma}$ and $(x,v) \in \overline{A_{\gamma}}$ such that $\widetilde{\alpha} \cdot (x,v) \in \overline{A_{\gamma'}}$. Since $\widetilde{\alpha}$ is an isometry of $M_{\Lambda}$ it commutes with the geodesic flow on $SM_{\Lambda}$. Therefore:
\begin{eqnarray*}
\widetilde{\alpha} \widetilde{\gamma} \cdot (x,v) &=& \widetilde{\alpha} \cdot \widehat{\Phi}_{\tau}(x,v) \\
&=& \widehat{\Phi}_{\tau}(\widetilde{\alpha} \cdot (x,v)) \\
&=& \widetilde{\gamma}' \widetilde{\alpha} \cdot(x,v).
\end{eqnarray*}
Since the action is free, this shows that $\widetilde{\alpha} \widetilde{\gamma} = \widetilde{\gamma}' \widetilde{\alpha}$.

Since $SM_{\Lambda}$ is a trivial fiber bundle and every $\overline{A_{\gamma}}$ just has a sub-torus in the first factor and fixed velocity vector in the second factor, to prove that $\widetilde{\alpha} \cdot \overline{A_\gamma} = \overline{A_{\gamma'}}$ it suffices to prove that $\pi \left( \widetilde{\alpha} \cdot \overline{A_{\gamma}} \right) = \pi \left( \overline{A_{\gamma'}} \right)$. Here, $\pi \colon SM_{\Lambda} \to M_{\Lambda}$ is the bundle projection. 

From the definition of $A_{\gamma}$, it is clear that $\pi \left( \overline{A_{\gamma}} \right) = [\myo_{\gamma} + \ker [B - \eye ]]$, where $\gamma = (B, \myt_b)$ and the brackets denote the equivalence class modulo the action of $\Lambda$. Let $\gamma' = (B', \myt_{b'})$ and $\widetilde{\alpha} = (A, \myt_a)$. Since $\widetilde{\alpha} \widetilde{\gamma} = \widetilde{\gamma}' \widetilde{\alpha}$, then $A B = B'A$. Since $\pi (\overline{A_{\gamma}})$ is an (embedded) totally geodesic submanifold inside $M_{\Lambda}$, it suffices to check that $A(\ker \left[ B - \eye \right]) = \ker \left[ B' - \eye \right]$. However, this follows from the fact that $A$ conjugates $B$ to $B'$. 
Thus $\pi(\widetilde{\alpha} \cdot \overline{A_{\gamma}}) = \pi (\overline{A_{\gamma}})$, as required. 
\end{proof}

Given Lemma \ref{bieber_submanifolds}, we now prove Proposition \ref{bieberbach_clean}:

\begin{proof}[Proof of Proposition \ref{bieberbach_clean}] 

By Lemma \ref{bieber_submanifolds}, $\fix(\Phi_\tau)$ is a disjoint union of closed embedded submanifolds for every $|\tau|$ in the length spectrum.  

We need to prove that if $\tau \in  \pm \specl(M_{\Gamma})$ and $\theta \in \fix(\Phi_{\tau})$ then:
\begin{equation*}
\ker  \left[ \eye -  D_\theta \Phi_\tau  \right] = T_\theta \fix(\Phi_{\tau}).
\end{equation*}
One inclusion is obvious. So it suffices to prove:
\begin{equation*}
\ker \left[ \eye -  D_\theta \Phi_\tau \right] \subset T_\theta \fix(\Phi_{\tau}).
\end{equation*}

First, we characterize $T_{\theta} \fix(\Phi_\tau)$. A connected component of $\fix(\Phi_{\tau})$ is of the form: $D \phi (A_{\gamma})$ for some $\gamma \in \Gamma$ (with the same notation as in the proof of Lemma \ref{bieber_submanifolds}). We have a decomposition of $T_{\theta} TM_{\Gamma} $ into a direct sum of horizontal and vertical subspaces: $T_{\theta} TM_{\Gamma} = H(\theta) \oplus V(\theta)$, with $H(\theta)$ and $V(\theta)$ both isomorphic (as vector spaces) to $T_x S^{2n-1}$ under the maps $D_x \pi$ and $K_\theta$, here $K_{\theta}$ is the connection map, see for instance \cite[Section 1.3]{book_geo_flow}.  Since the $A_{\gamma}$'s are subsets of $S \mathbb{R}^n$ with fixed velocity vector and $M_\Gamma$ is flat, it is clear that $T_{\theta} D \phi (A_{\gamma})$ has no vertical component. This yields the following:
\begin{equation*}
T_{\theta} \fix (\Phi_{\tau}) = \ker(B - \eye) \oplus 0,
\end{equation*}  
where we are using the vertical/horizontal splitting, and the identification of $T_x M_{\Gamma}$ with $\mathbb{R}^n$. Here $\theta \in D\phi(A_\gamma)$, where $\gamma = (B, \myt_b) \in \Gamma$. 

Now suppose that $(X,Y) \in T_{\theta}SM_{\Gamma}$ satisfies: $(X,Y) \in \ker \left[ \eye - D_{\theta} \Phi_{\tau} \right]$. Then this sets up the following system of equations for $(X,Y)$:
\begin{equation}
\label{beiber_relation1}
B(X)+ \tau B(Y) = X,
\end{equation}
and
\begin{equation}
\label{beiber_relation2}
B(Y) = Y.
\end{equation} 
Combing these two equations yields:
\begin{equation}
(B - \eye) X = \tau Y. 
\end{equation}
Clearly $Y \in \ker \lbrace B - \eye \rbrace$. However, since $\text{image} \lbrace B - \eye \rbrace^{\perp} = \ker \lbrace B - \eye \rbrace$, then this implies that $Y = 0$ and $ X \in \ker \lbrace B -  \eye \rbrace$, as required. 
\end{proof}

\section{Lens spaces and cleanliness}
\label{closed_geos_sec}

\subsection{Closed Geodesics on Lens Spaces}
In this section we classify all the closed geodesics in a lens space. Let $L =  L(q; p_1,...,p_n)$ be a lens space. 

Let $\phi: S^{2n-1} \to L$ be the (regular Riemannian) covering map. Throughout this article we will identify $S^{2n-1}$ with its image under the inclusion map into $\mathbb{R}^{2n}$. 
Therefore:
\begin{equation*}
S^{2n-1} = \{ x \in \mathbb{R}^{2n} ; \hspace{4 pt} || x|| = 1 \}.
\end{equation*}
We will also identify $TS^{2n-1}$ with its image under the induced embedding of $TS^{2n-1}$ as a submanifold of $\mathbb{R}^{2n} \oplus \mathbb{R}^{2n} \cong T \mathbb{R}^{2n}$. So:
\begin{equation*}
TS^{2n-1} = \{ (x, y) \in \mathbb{R}^{2n} \oplus \mathbb{R}^{2n} ; \hspace{4 pt} ||x|| = 1 \text{ and } \langle x, y \rangle = 0 \}
\end{equation*}

Define the following map $J: S^{2n-1} \to S^{2n-1}$ by:
\begin{equation*}
\begin{aligned}[c]
\newcommand*{\tempo}{\multicolumn{1}{|c}{0}}
\newcommand*{\tempR}{\multicolumn{1}{|c}{J_n}}
\newcommand*{\tempdots}{\multicolumn{1}{|c}{\ddots}}
J = 
\left[
\begin{array}{ccc}
J_1 &\tempo &\tempo \\ \hline
0 & \tempdots &\tempo \\ \hline
0& \tempo & \tempR
\end{array}\right]
\end{aligned}
; \qquad
\begin{aligned}[c]
J_i = \sgn \left(\sin\left(\frac{2 \pi}{q} p_i \right) \right) 
\left[
\begin{array}{cc}
 0 & -1 \\ 
1 & 0 \\ 
\end{array}\right]
\end{aligned}.
\end{equation*}
Since $J(x)$ is orthogonal to $x$, under the above identifications, we may also interpret $J(x)$ as an element of $T_xS^{2n-1}$. If we define $\mathcal{J}: S^{2n-1} \to TS^{2n-1}$ by $\mathcal{J}(x) = (x; \hspace{2 pt} J(x) )$ we see that $\mathcal{J}$ is a section of the bundle map $\pi: TS^{2n-1} \to S^{2n-1}$. 

We start by classifying closed geodesics on homogeneous lens spaces (Theorem \ref{geos}). Then, we classify closed geodesics on a general lens space (Corollary \ref{gen_length}).   Sakai first proved Theorem \ref{geos} in \cite{sakai_lens} and Corollary \ref{gen_length} in \cite{sakai_lens-correction}. The proof of Theorem \ref{geos} we present here differs from Sakai's. However, both proofs are elementary. We note that Sakai's statements of Theorem \ref{geos} in \cite{sakai_lens} and Corollary \ref{gen_length} in \cite{sakai_lens-correction} contain errors in the initial direction of the short geodesics which we have corrected. We include our own proof of Theorem \ref{geos} since the technique will be used throughout the paper. Recall a lens space is homogeneous if and only if the $p_i$ satisfy $\cos\left(\frac{2 \pi}{q} p_i \right) = \cos \left( \frac{2 \pi}{q} \right)$, i.e. $p_i \equiv \pm 1$ (mod $q$) \cite{wolf}.

\begin{theorem} \label{geos} Let $L(q;p_1,...,p_n)$ be a homogeneous lens space, $x = (x_1,...,x_{2n}) \in S^{2n -1}$, and $\overline{x} = \phi(x)$.
 \begin{enumerate} 
 \item If $q$ is odd, then there is a unique, primitive, geodesic of length $\frac{2 \pi}{q}$ based at $\overline{x}$. This geodesic has initial velocity vector $D_x \phi (J(x))$. All other geodesics are closed with primitive length $2 \pi$.  
 \item If $q$ is even, then there is a unique, primitive, geodesic of length $\frac{2 \pi}{q}$ based at $\overline{x}$. This geodesic has initial velocity vector $D_x \phi (J(x))$. All other geodesics are closed with primitive length $\pi$. 
 \end{enumerate}   
\end{theorem}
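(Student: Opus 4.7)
The plan is to classify closed geodesics on $L$ by lifting them to geodesic arcs on $S^{2n-1}$ and imposing a closing condition from the deck action of $\langle T \rangle$. Any geodesic on $S^{2n-1}$ is a great circle of the form $\gamma(t) = \cos(t)\, x + \sin(t)\, v$ with $x \in S^{2n-1}$ and $v \in T_xS^{2n-1}$ a unit vector. The arc $\gamma|_{[0,\tau]}$ descends to a smoothly closed geodesic of length $\tau$ based at $\overline{x}$ if and only if there exists $k \in \{0,1,\dots,q-1\}$ with
\begin{align*}
T^k x &= \cos\tau \cdot x + \sin\tau \cdot v, \\
T^k v &= -\sin\tau \cdot x + \cos\tau \cdot v.
\end{align*}

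The key step is to exploit the homogeneity assumption $p_i \equiv \pm 1 \pmod q$ together with the sign factor absorbed into the definition of $J_i$ to establish the uniform identity
\[
T^k x = \cos\!\left(\tfrac{2\pi k}{q}\right) x + \sin\!\left(\tfrac{2\pi k}{q}\right) J(x),
\]
valid for every $x \in S^{2n-1}$ and every integer $k$. This is a block-by-block computation: when $p_i \equiv 1$, both $T_i^k$ and $J_i$ are the standard rotation by $2\pi k/q$ and $\pi/2$ respectively; when $p_i \equiv -1$, both rotate in the opposite sense, so the factor $\sgn(\sin(2\pi p_i/q))$ in $J_i$ produces the same equality. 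This is the only place the homogeneity hypothesis is used.

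With this identity in hand, I would split the analysis according to whether $\{x, J(x), v\}$ is linearly dependent. If $v = \pm J(x)$, matching coefficients in the first displayed equation gives $\tau \equiv \pm 2\pi k/q \pmod{2\pi}$; the smallest primitive length is $2\pi/q$, attained (with the stated orientation $v = J(x)$) for $k=1$. If $v \neq \pm J(x)$, then $x$, $J(x)$, $v$ are linearly independent and matching coefficients forces $\sin(2\pi k/q) = 0 = \sin\tau$. For $q$ odd this requires $k \equiv 0 \pmod q$, hence $T^k = \eye$, and the smallest positive solution is $\tau = 2\pi$. For $q$ even we additionally have the option $k = q/2$, for which $T^{q/2} = -\eye$, giving $\tau = \pi$.

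In each case the second equation (tangent-vector matching) is automatic: it either follows from orthogonality of $T^k$ together with the first equation, or reduces to the same block computation used above. The main subtlety, and the point requiring care, is the bookkeeping of signs when some $p_i \equiv 1$ and others $\equiv -1 \pmod q$; the $\sgn$ factor in $J_i$ is precisely what unifies the two subcases into the single closed-form identity above. Once that identity is established, the remainder of the proof is straightforward linear algebra.
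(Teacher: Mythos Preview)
Your argument is correct. Both proofs begin with the same closing condition for a lift of a geodesic, but you diverge from the paper in how you exploit it.

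The paper works block by block: it projects the closing equations onto each $2$-plane $\operatorname{span}(e_{2i-1},e_{2i})$, analyzes the invertibility of $R(\theta_i)^k - \cos(t_0)\operatorname{Id}$, and derives an eigenvalue condition forcing $\cos(2\pi p_i k/q) = \cos(t_0)$ for every $i$ with $\pi_i(x)\neq 0$. Only \emph{after} this general analysis does the paper invoke homogeneity, observing that the condition collapses to $\cos(2\pi k/q)=\cos(t_0)$ and hence all short lengths are iterates of $2\pi/q$.

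You instead use homogeneity immediately to obtain the global identity $T^k x = \cos(2\pi k/q)\,x + \sin(2\pi k/q)\,J(x)$, which absorbs the block structure and sign bookkeeping into the definition of $J$. The rest of your argument is then pure linear algebra on the three vectors $x,\,J(x),\,v$: either $v=\pm J(x)$ and coefficient matching gives $\tau\equiv\pm 2\pi k/q$, or $\{x,J(x),v\}$ is independent and coefficient matching forces $\sin(2\pi k/q)=\sin\tau=0$.

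What each approach buys: yours is shorter and more transparent for the homogeneous case, since the single identity does all the work and the case split is geometrically natural. The paper's approach is deliberately more general---its block-by-block eigenvalue analysis makes no use of homogeneity until the last line, so the same machinery is reused verbatim to prove the classification for arbitrary lens spaces (their Corollary on $\overset{(l)}{\sim}$-adapted points). Your identity $T^k x = \cos(2\pi k/q)x + \sin(2\pi k/q)J(x)$ genuinely fails in the inhomogeneous case, so your argument would need to be redone from scratch there.
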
  

\begin{proof}
Since $L$ is covered by a sphere of radius 1 we observe that all geodesics are closed with primitive length at most $2\pi$. In the case that $q$ is even, $L$ is covered by real projective space (equipped with a metric of constant curvature 1) and we conclude that all geodesics are closed with primitive length at most $\pi$.
 
A primitive geodesic in $S^{2n-1}$ based at $x$ with initial direction $y = (y_1,...,y_{2n})$ (here $y \perp x$) is given by: 
\begin{equation*}
\sigma_y(t) = \cos(t) x + \sin(t) y,
\end{equation*}
with $0\leq t \leq 2 \pi$. Observe that $\sigma_y(t)$ descends to a closed geodesic in $L$ of length less than $2 \pi$ ($\pi$, if $q$ is even) 
if and only if the following two equations hold:
\begin{equation}
\label{pos}
T^k x = \cos(t_0) x + \sin(t_0) y,
\end{equation}
and 
\begin{equation}
\label{vel}
T^k y = -\sin(t_0) x + \cos(t_0) y,
\end{equation}
for some $0<t_0<2 \pi$ (or $0<t_0<\pi$ in the case that $q$ is even) and $T^k \in \langle T \rangle$. 
Notice that $T$ commutes with $\pi_i$, where $\pi_i: \mathbb{R}^{2n} \to \mathbb{R}^{2n}$ is the projection onto the subspace $\myspan_{\mathbb{R}} \left( e_{2i-1}, e_{2i} \right)$. Let $x^i = \pi_i(x)$ and $y^i = \pi_i(y)$. Then, \eqref{pos} and \eqref{vel} are equivalent to the following (for each $i$ such that $\pi_i(x)\neq 0$; if $\pi_i(x) = 0$ one sees that $y^i$ is necessarily zero):
\begin{equation}
\label{posi}
\left[R(\theta_i)^k -\cos(t_0) \eye \right] x^i = \sin(t_0) y^i, 
\end{equation}   
and 
\begin{equation}
\label{veli}
\left[R(\theta_i)^k -\cos(t_0) \eye \right] y^i = -\sin(t_0) x^i.
\end{equation}
Computing the determinant of $\left[R(\theta_i)^k -\cos(t_0) \eye \right]$ one sees that necessary and sufficient conditions for $\left[R(\theta_i)^k -\cos(t_0) \eye \right]$ to be singular are $\sin \left(\frac{2 \pi}{q} p_i k \right) = 0$ and $\cos \left(\frac{2 \pi}{q} p_i k \right) = \cos(t_0)$. This occurs only if $t_0 = 0$, $\pi$, or $2\pi$. 

If $q$ is even we may immediately conclude that every short, primitive, closed geodesic based at $\phi(x)$ of length $0<t_0<\pi$ there is a unique direction corresponding to this geodesic. If $q$ is odd, we note that there is no $k$ such that $\cos \left(\frac{2 \pi}{q} p_i k \right) = -1$ (for any $i$). Thus, it is also true in the odd case that primitive closed geodesics of length $0<t_0<2\pi$ are unique. Therefore we conclude that if $t_0$ is a short primitive length of a closed geodesic on $L$ based at a point $\overline{x} = \phi(x)$, then there is a unique direction at $\overline{x}$ corresponding to $t_0$.   

Now we derive necessary conditions for $y^i$. Using \eqref{posi} and \eqref{veli} one can show that $y^i$ satisfies:
\begin{equation}
\left[R(\theta_i)^k -\cos(t_0) \eye \right]^2 y^i = -\sin^2(t_0) y^i.
\end{equation}
Thus $y^i$ is an eigenvector of eigenvalue $-\sin^2(t_0)$. If this is true we must have 
\begin{eqnarray*}
0 &=&\det \left(\left[R(\theta_i)^k -\cos(t_0) \eye \right]^2 + \sin^2(t_0) \eye \right)  \\
&=& 4 \left(\cos\left(\frac{2 \pi}{q} p_i k \right) - \cos(t_0) \right)^2;
\end{eqnarray*}
the second equality follows from standard trigonometric identities.
Therefore, if $t_0$ is the length of a short closed geodesic, then $\cos\left(\frac{2 \pi}{q} p_i k \right) = \cos(t_0)$ for each $i$. Using \eqref{posi} we conclude:
\begin{equation}
\label{torus}
\sgn \left(\frac{\sin\left(\frac{2 \pi}{q} p_i k \right)}{\sin(t_0)} \right)\left[
\begin{array}{cc}
 0 & -1 \\ 
1 & 0 \\ 
\end{array}\right] x^i = y^i.
\end{equation}

This implies that if $\sigma_y(t)$ descends to a primitive short geodesic based at $\phi(x)$ then it is given by the orbit of a one-parameter subgroup of the usual maximal torus $T^n \hookrightarrow \SO(2n)$. 

So far we have not used the fact that $L$ is homogeneous. In this case $\cos \left( \frac{2 \pi}{q} p_i \right) = \cos \left(\frac{2\pi}{q}\right) $ for every $i$. One observes that $\frac{2 \pi}{q}$ is the length of a short geodesic. Using \eqref{torus} we see that all other short lengths $t_0$ satisfying $\cos\left(\frac{2 \pi}{q} p_i k \right) = \cos(t_0)$ are just iterates of the geodesic of length $\frac{2 \pi}{q}$.
\end{proof}

Following \cite{sakai_lens}, we introduce some notation to determine all the closed geodesics in an arbitrary lens space. In the proof of Theorem \ref{geos} we see that  a necessary condition for $\phi(x)$ to have a short geodesic of length $\tau$ is that $\cos \left( \frac{2 \pi }{q} p_i k \right) = \cos(\tau)$ holds for each $i$ such that $\pi^i(x) \neq 0$. With this in mind, we define the following family of equivalence relations on $\lbrace p_1,...,p_n \rbrace$. Fix $l \in \lbrace 1,2,...,q-1 \rbrace$ if $q$ is odd and $l \in \lbrace 1,2,..., \frac{q}{2} - 1 \rbrace$ if $q$ is even. Since each $p_i$ is relatively prime to $q$ there is a unique (mod $q$) integer $k_i$ such that $p_i k_i \equiv l$ (mod $q$). We say $p_i \overset{(l)}{\sim} p_j$ if $\cos\left( \frac{2 \pi}{q} p_i k_j \right) = \cos \left( \frac{2 \pi }{q} l \right)$. $\overset{(l)}{\sim}$ is clearly an equivalence relation. Now let $\lbrace p_1 = p_{j_1},..., p_{j_{m_1}}; ...; p_{j_{m_{b-1}+1}},...,p_{j_{m_b}} \rbrace$ be a partition of $\lbrace p_1,...,p_n \rbrace$ into equivalence classes with respect to $\overset{(l)}{\sim}$. We call a point $\overline{x} \in L(q; p_1,...,p_n)$ with $\overline{x} = \phi(x_1,...,x_{2n})$ {\it $\overset{(l)}{\sim}$-adapted} if there  is an $m_s \in \lbrace m_1,...,m_b \rbrace$ such that $x_{2j-1}=x_{2j}=0$ holds for $p_j \in \lbrace p_1,...,p_n \rbrace \setminus \lbrace p_{j_{m_{s-1}+1}},...,p_{j_{m_s}}\rbrace$. 

Fix $l$ and the decomposition of $\lbrace p_1,...,p_n \rbrace$ into equivalence classes as above. For the $r$-th equivalence class choose $k_r \in \lbrace 1,...,q-1\rbrace$  such that for each \newline $p_i \in \lbrace p_{j_{m_{r-1}+1}},...,p_{j_{m_r}}\rbrace$ we have $\cos \left( \frac{2 \pi}{q} p_i k_r \right) = \cos \left( \frac{2 \pi}{q} l \right)$. There are always two such choices of $k_r$ which determine the orientation of the closed geodesic. We will say that such a choice of $k_r$ {\it realizes} the $r$th equivalence class. Given a choice of $k_r$ for $r=1,...,b$ that realizes each equivalence class, we define the following operator:
\begin{equation*}
\begin{aligned}[c]
\newcommand*{\tempo}{\multicolumn{1}{|c}{0}}
\newcommand*{\tempR}{\multicolumn{1}{|c}{\tilde{J}_n}}
\newcommand*{\tempdots}{\multicolumn{1}{|c}{\ddots}}
J_l = 
\left[
\begin{array}{ccc}
\tilde{J}_1 &\tempo &\tempo \\ \hline
0 & \tempdots &\tempo \\ \hline
0& \tempo & \tempR
\end{array}\right]
\end{aligned}
; \qquad
\begin{aligned}[c]
\tilde{J}_i = \sgn \left(\frac{\sin\left(\frac{2 \pi}{q} p_i k_i \right)}{\sin(2 \pi l/q)} \right)
\left[
\begin{array}{cc}
 0 & -1 \\ 
1 & 0 \\ 
\end{array}\right]
\end{aligned},
\end{equation*}
where here $k_i = k_r$ when $p_i \in \lbrace p_{j_{m_{r-1}+1}},...,p_{j_{m_r}}\rbrace$. As we did with $J$, we define a section of the tangent bundle, $\mathcal{J}_l$, using $J_l$.

With this notation we have the following corollary to Theorem \ref{geos}:

\begin{corollary}
\label{gen_length}
Let $L(q; p_1,...,p_n)$ be a lens space. Let $l \in \lbrace 1,2,...,q-1 \rbrace$ if $q$ is odd and $l \in \lbrace 1,2,..., \frac{q}{2} - 1 \rbrace$ if $q$ is even. Then through every $\overset{(l)}{\sim}$-adapted point $\overline{x} \in L$ with $\phi(x) = \overline{x}$ there exists a geodesic of length $\frac{2 \pi}{q}l$ with initial direction $D_x \phi (J_l(x))$. This closed geodesic is primitive if and only if the base point is never $\overset{(l_i)}{\sim}$-adapted for all divisors $l_i$ of $l$ smaller than $l$. Every closed geodesic of length $\frac{2 \pi}{q} l$ is obtained in this way. All other geodesics are closed with length $2 \pi$ if $q$ is odd and $\pi$ if $q$ is even.  
\end{corollary}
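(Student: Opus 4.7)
The plan is to piggy-back on the machinery already developed in the proof of Theorem \ref{geos}. Equations \eqref{posi} and \eqref{veli}, and the conclusions drawn from them about $y^i$, were established for a general lens space; only the final paragraph of that proof specialised to the homogeneous case. So I would begin by re-invoking that analysis: a primitive geodesic $\sigma_y$ through $x \in S^{2n-1}$ descends to a closed geodesic of length $t_0 \in (0, 2\pi)$ (respectively $(0,\pi)$ if $q$ is even) if and only if there exists $k \in \{1,\dots,q-1\}$ such that, for every index $i$ with $\pi_i(x) \neq 0$,
\[
\cos\!\left(\tfrac{2\pi}{q} p_i k\right) = \cos(t_0), \qquad y^i = \sgn\!\left(\tfrac{\sin(\frac{2\pi}{q} p_i k)}{\sin(t_0)}\right) \!\begin{pmatrix} 0 & -1 \\ 1 & 0 \end{pmatrix}\! x^i .
\]
The first equation forces $t_0 = \frac{2\pi l}{q}$ for some $l$ in the specified range.

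Next I would translate the existence of such a $k$ into the $\overset{(l)}{\sim}$-adapted condition. Set $S(x) = \{\, i : \pi_i(x) \neq 0 \,\}$. Fix any $i_0 \in S(x)$ and let $k_{i_0}$ be the unique element of $\{1,\dots,q-1\}$ with $p_{i_0} k_{i_0} \equiv l \pmod{q}$. Then a single $k$ works for all of $S(x)$ if and only if $k = k_{i_0}$ and $\cos(\frac{2\pi}{q} p_i k_{i_0}) = \cos(\frac{2\pi}{q} l)$ for every $i \in S(x)$; equivalently, all $p_i$ with $i \in S(x)$ lie in a single equivalence class under $\overset{(l)}{\sim}$. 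This is precisely what it means for $\bar{x}$ to be $\overset{(l)}{\sim}$-adapted. Having fixed such a $k = k_r$ realising the relevant equivalence class, the formula above for $y^i$ reads off as the $i$-th block of $J_l(x)$, so the initial direction of the geodesic is $D_x\phi(J_l(x))$, as claimed.

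For primitivity, observe that a closed geodesic through $\bar{x}$ of length $\frac{2\pi l}{q}$ fails to be primitive exactly when it is the $(l/l_i)$-th iterate of a shorter closed geodesic through $\bar{x}$ of length $\frac{2\pi l_i}{q}$ with $l_i \mid l$, $l_i < l$. By the preceding paragraph such a shorter closed geodesic through $\bar{x}$ exists if and only if $\bar{x}$ is $\overset{(l_i)}{\sim}$-adapted; the compatibility of initial directions (i.e.\ that the shorter geodesic, when iterated $l/l_i$ times, gives back the one of length $\frac{2\pi l}{q}$ in direction $D_x\phi(J_l(x))$) follows from the sign rule built into $J_l$ and $J_{l_i}$. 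Finally, any geodesic that is not one of these ``short'' closed geodesics must lift to a full great circle in the covering $S^{2n-1}$, or to a full closed geodesic in $\mathbb{RP}^{2n-1}$ when $q$ is even, hence has length $2\pi$ or $\pi$ respectively.

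The main obstacle I foresee is the bookkeeping of signs: verifying that the sign $\sgn(\sin(\frac{2\pi}{q} p_i k_r)/\sin(\frac{2\pi l}{q}))$ appearing in the equations for $y^i$ matches precisely the one built into $\tilde J_i$, and that under this sign convention the $(l/l_i)$-th iterate of the $J_{l_i}$-direction geodesic is indeed the $J_l$-direction geodesic. Everything else is a direct translation of the equations from Theorem \ref{geos} through the $\overset{(l)}{\sim}$ formalism.
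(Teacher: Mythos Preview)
Your proposal is correct and follows essentially the same route as the paper: both arguments re-use the equations from Theorem~\ref{geos} to characterise short closed geodesics, translate the condition $\cos(\tfrac{2\pi}{q}p_i k)=\cos(\tfrac{2\pi l}{q})$ for all $i\in S(x)$ into the $\overset{(l)}{\sim}$-adapted language, and read off the direction as $J_l(x)$ via \eqref{torus}. For the sign bookkeeping you flag as an obstacle, the paper dispatches it in one line: if $\tilde l s = l$ and $\tilde k$ realises the $\overset{(\tilde l)}{\sim}$-class of $\bar x$, then $\tilde k s$ realises its $\overset{(l)}{\sim}$-class, whence $J_{\tilde l}(x) = J_l(x)$ on the relevant coordinates.
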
   

\begin{proof}
By the argument in Theorem \ref{geos}, we see that $\overline{x} \in L$ has a short, closed geodesic if and only if $\overline{x}$ is $\overset{(l)}{\sim}$-adapted. Moreover, if $x$ is a lift of $\overline{x}$, then by \eqref{torus}, the direction of the short geodesic is given by $J_l(x)$. One observes that if $k$ is another choice which realizes the equivalence class corresponding to $\overline{x}$, then $J_l(x)$ will differ from the original by a sign. Thus, the short geodesic does not depend on the choice of $k$ that realizes each equivalence class. 

If the closed geodesic is not primitive, then $\overline{x}$ must be $\overset{(\tilde{l})}{\sim}$-adapted for some smaller divisor $\tilde{l}$ of $l$. Conversely, if $\overline{x}$ is $\overset{(\tilde{l})}{\sim}$-adapted for some smaller $\tilde{l}$ dividing $l$, then for every $p_i$ in the $\overset{(l)}{\sim}$-class corresponding to $\overline{x}$ there is $\tilde{k}$ such that $p_i \tilde{k} \equiv \pm \tilde{l}$ (mod $q$). Thus, if $\tilde{l} s = l$, then $\tilde{k}s$ realizes the $\overset{(l)}{\sim}$-class corresponding to $\overline{x}$. So $J_{\tilde{l}}(x) = J_{l}(x)$ for $x$ a lift of $\overline{x}$.  
\end{proof}

\subsection{Lens Spaces are Clean}
  With the closed geodesics classified, we tackle the cleanliness issue. Recall that since $\fix (\Phi_{\tau}) = \fix (\Phi_{-\tau})$ and $D_u \Phi_{-\tau} = \left( D_u \Phi_{\tau} \right)^{-1}$, it suffices to check cleanliness for $\tau >0$. We start by showing that for $\tau \in \specl(L)$ the fixed point set of $\Phi_\tau$ is a disjoint union of finitely many closed (embedded) submanifolds.

\begin{lemma}
\label{sub_inhom}
Let $L(q; p_1,...,p_n)$ be a lens space. Suppose $\tau \in \specl(L)$, then $\fix ( \Phi_{\tau} )$ is a finite disjoint union of closed submanifolds. 
\end{lemma}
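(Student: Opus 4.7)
The plan is to lift the problem to the universal cover $\phi \colon S^{2n-1} \to L$ and exploit the explicit form of the geodesic flow on the round sphere, $\widetilde{\Phi}_t(x,v) = (\cos(t)\,x + \sin(t)\,v,\,-\sin(t)\,x + \cos(t)\,v)$. A point in $SL$ is fixed by $\Phi_\tau$ precisely when one (equivalently, every) lift $(x,v) \in SS^{2n-1}$ satisfies $\widetilde{\Phi}_\tau(x,v) = T^k(x,v)$ for some $k \in \{0,1,\ldots,q-1\}$. For each such $k$ I set
\begin{equation*}
B_k = \{(x,v) \in SS^{2n-1} : \widetilde{\Phi}_\tau(x,v) = T^k(x,v)\},
\end{equation*}
so that $\phi^{-1}(\fix(\Phi_\tau)) = \bigcup_{k=0}^{q-1} B_k$. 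I will show each $B_k$ is a closed embedded submanifold of $SS^{2n-1}$, the $B_k$ are pairwise disjoint, and the projections $\phi(B_k)$ assemble into the required decomposition of $\fix(\Phi_\tau)$.

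The first and main step is to verify that each $B_k$ is a smooth closed submanifold. For this, I re-use the $T$-invariant decomposition $\mathbb{R}^{2n} = \bigoplus_{i=1}^n \myspan(e_{2i-1}, e_{2i})$ introduced in the proof of Theorem \ref{geos}. Writing $x = \sum x^i$ and $v = \sum v^i$, the defining equations $T^k x = \cos\tau\, x + \sin\tau\, v$ and $T^k v = -\sin\tau\, x + \cos\tau\, v$ decouple into $n$ independent two-dimensional systems in $(x^i, v^i)$. Using essentially the determinant computation already appearing in the proof of Theorem \ref{geos}, one sees that in the $i$-th plane the system forces $x^i = v^i = 0$ unless $\cos\left(\frac{2\pi}{q} p_i k\right) = \cos(\tau)$; in the latter case, when $\sin\tau \neq 0$ the velocity is determined by the position via $v^i = \pm J x^i$, while when $\sin\tau = 0$ both $x^i$ and $v^i$ are unconstrained in the plane. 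In every case the conditions cutting out $B_k$ are linear in $(x,v)$ and automatically compatible with $\|x\| = 1$ and $\langle x,v \rangle = 0$, so $B_k$ is the intersection of a linear subspace of $T\mathbb{R}^{2n}$ with $SS^{2n-1}$, hence a smooth closed submanifold.

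To finish, I descend to $SL$. Since each $T^j$ is an isometry it commutes with $\widetilde{\Phi}_\tau$, so each $B_k$ is $\langle T \rangle$-invariant, and because $\langle T \rangle$ acts freely on $SS^{2n-1}$, each $\phi(B_k) \cong B_k/\langle T \rangle$ is a closed embedded submanifold of $SL$. The $B_k$ are pairwise disjoint for distinct $k \in \{0,\ldots,q-1\}$: if $(x,v) \in B_k \cap B_{k'}$ then $T^{k-k'}(x,v) = (x,v)$, forcing $k = k'$ by freeness; this disjointness then descends to the images $\phi(B_k)$ because each $B_k$ is already $\langle T \rangle$-invariant. This exhibits $\fix(\Phi_\tau)$ as a disjoint union of at most $q$ closed embedded submanifolds. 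I expect the main technical obstacle to be clean execution of the two-plane case analysis, particularly the degenerate case $\sin\tau = 0$, where the defining equations collapse to $T^k u = \pm u$ and one must carefully track which indices $i$ contribute active blocks; the non-degenerate case is handled almost verbatim by the argument in Theorem \ref{geos}.
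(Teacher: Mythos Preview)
Your argument is correct and rests on the same blockwise linear analysis of the equations $T^k x=\cos\tau\,x+\sin\tau\,v$, $T^k v=-\sin\tau\,x+\cos\tau\,v$ that the paper uses. The difference is organizational: the paper indexes the components of $\fix(\Phi_\tau)$ by $\overset{(l)}{\sim}$-equivalence classes together with an orientation, parametrizing each one explicitly as the image of an embedding $\mathcal{J}_l^{\pm}\circ\iota_r\colon S^{2m_r-1}\to SS^{2n-1}$, whereas you index by the deck transformation $k\in\{0,\dots,q-1\}$ and recognize each $B_k$ as the intersection of a linear subspace of $T\mathbb{R}^{2n}$ with $SS^{2n-1}$. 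The two decompositions agree via $N_r^{+}=B_{k_r}$ and $N_r^{-}=B_{q-k_r}$. Your indexing buys you disjointness and $\langle T\rangle$-invariance essentially for free (freeness of the action and commutativity of $T$ with $\widetilde{\Phi}_\tau$), which the paper instead reads off from its explicit parametrization. Conversely, the paper's explicit identification of each component as a sphere of dimension $2m_r-1$ via $\mathcal{J}_l^{\pm}\circ\iota_r$ is not incidental bookkeeping: it is reused throughout Sections~\ref{morse_index_comp} and~\ref{DG_density_section} to compute the Morse index and the Duistermaat--Guillemin density, so if you continue in this direction you will eventually need to unpack your $B_k$'s into that form anyway. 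One small point: your ``$v^i=\pm Jx^i$'' should not be read as a choice---the sign is determined by $\sgn\!\big(\sin(2\pi p_ik/q)/\sin\tau\big)$, exactly as in equation~\eqref{torus}.
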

 
\begin{proof}
It suffices to consider the case where $\tau = \frac{2 \pi}{q} l$ where $l = 1,...,q-1$ in the odd case and $l=1,...,\frac{q}{2}-1$ in the even case. Now, let \newline $\lbrace p_1 = p_{j_1},..., p_{j_{m_1}}; ...; p_{j_{m_{b-1}+1}},...,p_{j_{m_b}} \rbrace$ be a partition of $\lbrace p_1,...,p_n \rbrace$ into equivalence classes with respect to $\overset{(l)}{\sim}$. For the $r-$th equivalence class let $\iota_{r}\colon \mathbb{R}^{m_r} \to \mathbb{R}^{2n}$ be the inclusion of $\mathbb{R}^{m_r}$ into $\mathbb{R}^{2n}$ whose image satisfies:
\begin{multline*}
\text{image}(\iota_r) = \lbrace (x_1,...,x_{2n}) \in \mathbb{R}^{2n}; \hspace{4 pt} x_{2k-1}=x_{2k}=0 \mbox{ holds for } \\
p_k \in \lbrace p_1,...,p_n \rbrace \setminus \lbrace p_{j_{m_{r-1}+1}},...,p_{j_{m_r}}\rbrace \rbrace.
\end{multline*}
 For distinct $r$-values the images of $\iota_r$ are disjoint. Through restriction we get an isometric embedding $\iota_r\colon S^{2m_r-1} \to S^{2n-1}$. Also (after a choice of $k$ realizing each equivalence class) we get a pair of embeddings $\mathcal{J}_l^{\pm} \circ \iota_r\colon S^{2m_r-1} \to S(S^{2n-1})$, which give the lift of the closed geodesics and their time reversals. Here $\mathcal{J}_l^{\pm}(x) = (x, \pm J_l(x))$.

Let the image of $S^{2m_r - 1}$ under $\mathcal{J}_l^{\pm} \circ \iota_r$ be $N_r^{\pm}$ (a pair of disjoint closed embedded submanifolds in $S(S^{2n-1})$). By Corollary \ref{gen_length} we see that both $D \phi(N_r^{\pm})$ are contained in $\fix ( \Phi_{\tau} )$ and are embedded submanifolds of $SL$. Moreover, every element of $\fix ( \Phi_{\tau} )$ is in the image of one of these embeddings. Clearly, the images of $\mathcal{J}_l^{\pm} \circ \iota_r$ are distinct for different $r$ values. Moreover, since $T$ preserves $N_r^{\pm} \subset S(S^{2n-1})$ we see that $D \phi (N_r^{\pm})$ are disjoint submanifolds of $SL$.  
\end{proof} 
\begin{remark}
In case $L$ is homogeneous we can prove Lemma \ref{sub_inhom} by noting that $\fix(\Phi_\tau)$ is just a pair of orbits of a connected compact Lie group (namely the one acting transitively on $L$). Therefore the fixed point set is a disjoint union of two closed submanifolds. 
\end{remark}  

\begin{proposition}
\label{clean}
For each $\tau \in \specl(L(q; p_1,...,p_n))$ and for each $\overline{\theta} \in \fix(\Phi_\tau)$ we have:
\begin{equation*}
\ker \left[ D_{\overline{\theta}} \Phi_\tau - \eye \right] = T_{\overline{\theta}} \fix(\Phi_\tau).
\end{equation*} 
\end{proposition}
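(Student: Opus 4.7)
The containment $T_{\overline\theta}\fix(\Phi_\tau) \subseteq \ker[D_{\overline\theta}\Phi_\tau - \eye]$ is immediate by differentiating $\Phi_\tau|_{\fix(\Phi_\tau)} \equiv \id$. For the reverse inclusion, my plan is to lift to the covering $\phi\colon S^{2n-1} \to L$. Choose $\theta = (x, v) \in SS^{2n-1}$ with $D\phi(\theta) = \overline\theta$. By Corollary \ref{gen_length} and the proof of Lemma \ref{sub_inhom}, $\theta$ lies in a submanifold $N_r^\pm \subset SS^{2n-1}$ for some equivalence class $r$, with $x \in V := \text{image}(\iota_r)$, $v = \pm J_l(x)$, and $\hat\Phi_\tau(\theta) = T^k \theta$ for some $k$ realizing the $r$th class (here $\hat\Phi_\tau$ is the geodesic flow on $SS^{2n-1}$). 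Using $\phi \circ \hat\Phi_\tau = \Phi_\tau \circ \phi$, the condition $(X,Y) \in \ker[D_{\overline\theta}\Phi_\tau - \eye]$ translates via $D_\theta\phi$ to
\[
D_\theta \hat\Phi_\tau(X,Y) = DT^k|_\theta(X,Y)
\]
on $T_\theta SS^{2n-1}$.

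Next I would compute both sides using the Jacobi field description on a space of constant curvature $1$. Writing $\gamma(t) = \cos t \cdot x + \sin t \cdot v$ and decomposing $X = \alpha v + X^\perp$ with $X^\perp \perp v$, note that $X^\perp$ and $Y$ are orthogonal to both $x$ and $v$, so they are constant as vectors in $\mathbb{R}^{2n}$ when parallel-transported along $\gamma$. Consequently the Jacobi field with $J(0) = X$, $J'(0) = Y$ has the closed form
\begin{align*}
J(t) &= \alpha \gamma'(t) + \cos t \cdot X^\perp + \sin t \cdot Y,\\
J'(t) &= -\sin t \cdot X^\perp + \cos t \cdot Y.
\end{align*}
On the other hand, $DT^k|_\theta(X,Y) = (T^k X, T^k Y)$ since $T^k$ is linear, and $T^k v = \gamma'(\tau)$ on $V$ (a calculation from the proof of Theorem \ref{geos}). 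Equating $(J(\tau), J'(\tau))$ with $(T^k X, T^k Y)$ and separating the $v$-component yields the coupled system
\[
(T^k - \cos\tau \cdot I) X^\perp = \sin\tau \cdot Y, \qquad (T^k - \cos\tau \cdot I) Y = -\sin\tau \cdot X^\perp.
\]
Eliminating $Y$ gives $\bigl[(T^k - \cos\tau \cdot I)^2 + \sin^2\tau \cdot I\bigr] X^\perp = 0$.

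A block-by-block determinant computation on $T^k = \operatorname{diag}(R(2\pi p_i k/q))$, essentially the one carried out in the proof of Theorem \ref{geos}, shows this operator has nontrivial kernel precisely on blocks with $\cos(2\pi p_i k/q) = \cos\tau$. Since $k$ realizes the $r$th class, this kernel is exactly $V$. On $V$, $T^k$ restricts to $\cos\tau \cdot I + \sin\tau \cdot J_l$, so the first equation gives $Y = J_l(X^\perp)$. Combined with the constraints $X^\perp \perp x, v$ and $Y \perp x, v$ from $(X, Y) \in T_\theta SS^{2n-1}$, this yields
\[
\ker[D_\theta \hat\Phi_\tau - DT^k|_\theta] = \{(\alpha v + X^\perp, J_l(X^\perp)) : \alpha \in \mathbb{R},\, X^\perp \in V \cap x^\perp \cap v^\perp\}.
\]
Expanding $D(\mathcal{J}_l^\pm \circ \iota_r)$ on $T_x S^{2m_r-1}$ in the horizontal-vertical decomposition of $T_\theta SS^{2n-1}$ produces precisely the same subspace, so pushing down via $D_\theta\phi$ completes the proof.

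The main obstacle is the block-diagonal algebra identifying $\ker[(T^k - \cos\tau I)^2 + \sin^2\tau I]$ with $V$, together with a dimension check matching this kernel to $T_\theta N_r^\pm$. A secondary subtlety is keeping careful track of the horizontal-vertical decomposition on the unit tangent bundle: allowing $X$ to have a $v$-component (the free parameter $\alpha$) corresponds geometrically to reparametrization along the closed geodesic, and this is what makes the kernel one dimension larger than one might naively expect.
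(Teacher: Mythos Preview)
Your argument is correct and follows essentially the same route as the paper: lift to the sphere, express $D_\theta\Phi_\tau$ via Jacobi fields, derive the coupled system $(T^k-\cos\tau\,I)X^\perp=\sin\tau\,Y$ and $(T^k-\cos\tau\,I)Y=-\sin\tau\,X^\perp$, eliminate to force $X^\perp\in V$ and $Y=J_l(X^\perp)$, and then match with $T_\theta N_r^\pm$. The only cosmetic differences are that the paper disposes of the $(\ol v,0)$ and $(0,\ol v)$ directions at the outset (your free parameter $\alpha$), packages the parallel-transport observation as a separate Lemma~\ref{little_lemma}, and records the identification of $T_\theta N_r^\pm$ as Lemma~\ref{char}; your ambient-$\mathbb{R}^{2n}$ viewpoint makes the parallel-transport step immediate.
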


Before proving Proposition \ref{clean} we characterize $T_{\overline{\theta}} \fix(\Phi_\tau)$. Fix $\overline{\theta} = (\overline{x}, \hspace{2 pt} \overline{v}) \in SL$. Let $\theta = (x, \hspace{2 pt} v)$ be a lift of $\overline{\theta}$. If $\overline{\theta} \in \fix(\Phi_{\tau})$, then $\theta \in N_r^{\pm}$ for some $r = 1,...,b$ (with the notation for the equivalence classes as above). It suffices to characterize $T_{\theta} N_r^{\pm}$ since $N_r^{\pm}$ descends to a connected component of $\fix(\Phi_\tau)$.  

For the next lemma we identify $S^{2m_r-1}$ with its image under $\iota_r$. As in the flat case, we have a decomposition of $T_\theta TS^{2n-1}$ into a direct sum of horizontal and vertical subspaces: $T_\theta TS^{2n-1} = H(\theta) \oplus V(\theta)$. 
Given $x \in S^{2n-1} \subset \mathbb{R}^{2n}$ we may identify $T_x \mathbb{R}^{2n}$ with $\mathbb{R}^{2n}$. Through this identification we may consider $J_l$ as an endomorphism of  $T_x \mathbb{R}^{2n}$. Given $v \in T_x \mathbb{R}^{2n}$ let $v^T$ denote the tangential projection of $v$ onto $T_x S^{2n-1}$. Then, if we let $J_l^T$ denote the the map $v \mapsto J_l(v)^T$, we may consider $J_l^T$ as an endomorphism of $T_x S^{2n-1}$.   

\begin{lemma}
\label{char}
Let $x \in S^{2m_r-1} \subset S^{2n-1}$. Under the identifications described above:
\begin{equation}
T_\theta \mathcal{J}_l(S^{2m_r-1}) \cong \text{ Graph}\left(\restr{J_l^T}{T_x S^{2m_r-1}} \right)\subset H(\theta) \oplus V(\theta).
\end{equation}
(Recall that $H(\theta)$ and $V(\theta)$ are both isomorphic to $T_xS^{2n-1}$.)
\end{lemma}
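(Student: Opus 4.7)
My plan is to compute $T_\theta \mathcal{J}_l(S^{2m_r-1})$ first as a subspace of the ambient $T_\theta(T\mathbb{R}^{2n}) \cong \mathbb{R}^{2n} \oplus \mathbb{R}^{2n}$, and then translate the answer into the horizontal-vertical splitting using the bundle projection $\pi$ and the connection map $K_\theta$.

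First, I will use the fact that $\mathcal{J}_l(x) = (x, J_l(x))$ with $J_l$ a fixed linear endomorphism of $\mathbb{R}^{2n}$, so that differentiating at $x$ along a tangent vector $w \in T_xS^{2m_r-1}$ (included in $T_xS^{2n-1}$ via $\iota_r$) gives the extrinsic expression
\[
D_x(\mathcal{J}_l \circ \iota_r)(w) = (w, J_l(w)).
\]
This identifies $T_\theta \mathcal{J}_l(S^{2m_r-1})$ with $\{(w, J_l(w)) : w \in T_xS^{2m_r-1}\}$ as a subspace of $\mathbb{R}^{2n} \oplus \mathbb{R}^{2n}$.

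Next, I will apply the horizontal-vertical decomposition to each such pair. The horizontal component is $D_\theta \pi(w, J_l(w)) = w \in T_xS^{2n-1} \cong H(\theta)$. For the vertical component I will compute $K_\theta(w, J_l(w))$ by picking a curve $\gamma \colon (-\varepsilon, \varepsilon) \to S^{2m_r-1}$ with $\gamma(0) = x$ and $\gamma'(0) = w$, and taking the covariant derivative of the vector field $V(t) := J_l(\gamma(t))$ along $\gamma$ at $t = 0$. Since the Levi-Civita connection on the round sphere is the tangential projection of the ambient flat connection on $\mathbb{R}^{2n}$, and since $V'(0) = J_l(w)$ by linearity of $J_l$, this yields $K_\theta(w, J_l(w)) = J_l(w)^T = J_l^T(w)$.

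Combining the two components, under the identification $H(\theta) \oplus V(\theta) \cong T_xS^{2n-1} \oplus T_xS^{2n-1}$ the tangent space becomes $\{(w, J_l^T(w)) : w \in T_xS^{2m_r-1}\}$, which is exactly the graph of $\restr{J_l^T}{T_xS^{2m_r-1}}$. The main subtlety (though not a real obstacle) is keeping straight the translation between the extrinsic $\mathbb{R}^{2n} \oplus \mathbb{R}^{2n}$ description and the intrinsic horizontal-vertical splitting; once that is handled via the connection map, the linearity of $J_l$ makes the computation immediate.
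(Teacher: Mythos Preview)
Your proposal is correct and follows essentially the same approach as the paper: both pick a curve $\sigma(t)=(\alpha(t),J_l(\alpha(t)))$ in $\mathcal{J}_l(S^{2m_r-1})$, read off the horizontal component as $\dot\alpha(0)$ via $D\pi$, and compute the vertical component via the connection map $K_\theta$ as the tangential projection of the Euclidean derivative of $t\mapsto J_l(\alpha(t))$, which by linearity of $J_l$ is $J_l(\dot\alpha(0))^T=J_l^T(\dot\alpha(0))$. The only cosmetic difference is that you first record the extrinsic derivative $(w,J_l(w))\in\mathbb{R}^{2n}\oplus\mathbb{R}^{2n}$ before passing to $H(\theta)\oplus V(\theta)$, whereas the paper works directly with the curve throughout.
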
 
\begin{proof}
Let $\sigma(t) = (\alpha(t), \hspace{2 pt} Z(t))$ be a curve in $TS^{2n-1}$ such that $\sigma(t)$ is contained in $\mathcal{J}_l(S^{2m_r-1})$ and $\sigma(0) = \theta$. By hypothesis $Z(t) = J_l(\alpha(t))$. We compute $\dot{\sigma}(0) \in H(\theta) \oplus V(\theta)$.
The $H(\theta)$-component is given by: 
\begin{eqnarray*}
D_x \pi (\dot \sigma (0)) &=& \restr{\frac{d}{dt}}{t=0} \pi(\sigma(t)) \\
&=& \dot{\alpha}(0).
\end{eqnarray*}  
The $V(\theta)$ component is given by $K_\theta (\dot{\sigma}(0))$. We compute:
\begin{eqnarray*}
K_\theta (\dot{\sigma}(0)) &=& \nabla_{\alpha} Z(0)\\
&=& \left(\overline{\nabla}_{\dot{\alpha}(0)} \tilde{Z}\right)^T,
\end{eqnarray*}
where $\overline{\nabla}$ is the Euclidean connection and $\tilde{Z}$ is an extension of $Z$ to a vector field on $\mathbb{R}^{2n}$. In standard Euclidean coordinates let $\dot{\alpha}(0) = Y^i \frac{\partial}{\partial x^i}$ (with the Einstein summation convention). An extension of $Z$ is given by $\tilde{Z}(x) = J^{ij}_l x_j \frac{\partial}{\partial x_i}$ ($J^{ij}_l$ are the matrix elements of $J_l$ with respect to the standard basis). Now we compute:  
\begin{eqnarray*}
\overline{\nabla}_{\dot{\alpha}(0)} \tilde{Z} &=& Y^k \frac{\partial}{\partial x_k} J^{ij}_l x_j \frac{\partial}{\partial x_i} \\
&=& Y^k J^{ik}_l \frac{\partial}{\partial x_i} \\
&=& J_l(\dot{\alpha}(0))(x). 
\end{eqnarray*}
Therefore $K_\theta (\dot{\sigma}(0)) = J_l^T(\dot{\alpha}(0))$.  
\end{proof}
\begin{proof}[Proof of Proposition \ref{clean}]
Since the geodesic flow on $L(q; p_1,...,p_n)$ is periodic, we need only consider $0< \tau < 2 \pi$ when $q$ is odd, or $0< \tau < \pi$ when $q$ is even. Fix $\tau = \frac{2 \pi}{q} l$ with $l = 1,...,q-1$ in the odd case and $l=1,...,\frac{q}{2}-1$ in the even case. Suppose that $(\ol{x}, \ol{v}) = \overline{\theta} \in \fix (\Phi_{\tau} )$. One inclusion is obvious. We need to prove that $\text{ker} \left[ D_{\ol{\theta}} \Phi_{\tau} - \id \right] \subset  T_{\ol{\theta}} \fix(\Phi_{\tau})$. 

By Lemma \ref{sub_inhom} we may assume that $\overline{\theta} \in D\phi(N_r^+)$ for some $r$ (the case of $N_r^-$ is handled similarly). Using the vertical/horizontal splitting of $T_{\overline{\theta}} TL$, we have \newline $D_{\overline{\theta}} \Phi_\tau (\overline{A}, \overline{B}) = (\overline{Y}(\tau),\nabla \overline{Y}(\tau))$ where $\overline{Y}(t)$ is a Jacobi field along the short geodesic with initial conditions $\overline{Y}(0) = \overline{A} \in T_{\ol{x}}L$ and $\nabla \overline{Y}(0) = \overline{B} \in T_{\ol{x}}L$. We have the following standard observations: $D_{\ol{\theta}} \Phi_{\tau}(\ol{v},0) = (\ol{v},0)$ and $D_{\ol{\theta}} \Phi_{\tau}(0,\ol{v}) = (\tau \ol{v}, \ol{v} )$.  Therefore, we need only consider $(\overline{A},\overline{B}) \in \mye_{\ol{\theta}} \oplus \mye_{\ol{\theta}}$, where 
\begin{equation*}
\mye_{\ol{\theta}} = \{ X \in T_{\ol{x}} L; \hspace{4 pt} \left \langle X, \ol{v} \right \rangle = 0 \}. 
\end{equation*}

Let $\sigma(t)$ be the geodesic on the sphere that satisfies $\sigma(0) = x$ and $\dot{\sigma}(0) = v$. Then, $\sigma$ projects to a geodesic in $L$ based at $\overline{x}$. Since the lift of any Jacobi field in $L$ is a Jacobi field on $S^{2n-1}$, it suffices to show that if $Y(t)$ is a Jacobi field on $S^{2n-1}$ along $\sigma$ satisfying $D\phi \left(\sigma\left( \tau \right), Y\left( \tau \right) \right) = D\phi \left(\sigma\left( 0 \right), Y(0) \right)$ and $D\phi \left(\sigma\left( \tau \right), \nabla Y\left( \tau \right) \right) = D\phi \left(\sigma\left( 0 \right), \nabla Y(0) \right)$, then $(Y(0), \nabla Y(0)) \in T_{\theta} \mathcal{J}_l\left( S^{2m_r-1} \right)$. 

Fix $(A,B) \in \mye_{\theta} \oplus \mye_{\theta} \subset T_{\theta}S(S^{2n-1})$. A Jacobi field with initial conditions \newline $(Y(0), \nabla Y(0) ) = (A, B)$ is of the form: 
\begin{equation*}
Y(t) = \cos(t) A(t) + \sin(t) B(t),
\end{equation*}
where $A(t)$ and $B(t)$ are the parallel translations of $A$ and $B$ along $\sigma(t)$, respectively. Let $P_{x}(t)$ be parallel translation along $\sigma$ up to time $t$. Since $S^{2n-1}$ is a symmetric space, then $P_{x}(t)$ is the derivative of a one parameter subgroup of $\SO (2n)$ whose orbit is the image of the geodesic $\sigma$. Thus, by the linearity of the action, $P_{x}(t) \in \SO (2n)$ for every $t$. 

Now suppose that $Y(t)$ descends to a periodic Jacobi field on $L$ of period $\tau$. This means that $Y(\tau)=  D_{x} T^{k_r} (Y(0) )$ and $\nabla Y(\tau)=  D_{x} T^{k_r} (\nabla Y(0) )$, where $k_r$ realizes the $r$th-equivalence class. This sets up the following equation:
\begin{equation}
\cos(\tau) P_{x}(\tau)A + \sin(\tau) P_{x}(\tau) B = T^{k_r}(A),
\end{equation}
Proceeding as in the proof of Theorem \ref{geos} we write:
\begin{equation}
P_{x}(\tau)B = \frac{1}{\sin(\tau)} \left[T^{k_r} - \cos(\tau)P_{x}(\tau) \right] A.
\end{equation}
We claim that (under the usual identifications) $P_{x}(\tau)B = B$ and $P_{x}(\tau)A = A$. We defer the proof of this fact until after the proof of the proposition. Assuming $P_x(\tau)$ fixes $A$ and $B$, we have:
\begin{equation*}
B = \frac{1}{\sin(\tau)} \left[T^{k_r} - \cos(\tau)\eye \right] A.
\end{equation*}
Similarly, by using the condition on $\nabla Y(t)$ one finds $A$ and $B$ must also satisfy:
\begin{equation*}
A = \frac{1}{\sin(\tau)} \left[T^{k_r} - \cos(\tau)\eye \right] B.
\end{equation*}
Proceeding as in the proof of Theorem \ref{geos} we study the previous two equations for the projections of $A$ and $B$ onto $\myspan_{\mathbb{R}} \left( e_{2i-1}, e_{2i} \right)$. We obtain:
\begin{equation*}
\left[R(\theta_i)^{k_r} -\cos(\tau) \eye \right]^2 B^i = -\sin^2(\tau) B^i,
\end{equation*}
However, as observed earlier, this is true only if $\cos(2 \pi p_i k_r/q ) = \cos( \tau)$. Thus, $Y(t)$ descends to a periodic Jacobi field only if $B = J_l(A)$ with $A \in T_{x}S^{2m_r-1}$. Moreover, since $\langle A, v \rangle = 0$ we see that $\langle J_l(A), x \rangle = \langle A, - J_l(x) \rangle = \langle A, -v \rangle = 0$. Thus $B = J_l(A) = J_l^T(A)$. By Lemma \ref{char} this implies that $(A,B) \in T_{\theta} \mathcal{J}_l(S^{2m_r-1})$.
\end{proof}

\noindent Proposition \ref{clean} is proved, and therefore cleanliness, modulo the following lemma:

\begin{lemma}
\label{little_lemma}
With the notation as above, for every $V \in \mye_{\theta}$ we have $P_{x}(t) V = V$ for every $t$ (under the usual identifications). 
\end{lemma}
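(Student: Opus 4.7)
The plan is to observe that the geodesic $\sigma(t)=\cos(t)x+\sin(t)v$ on $S^{2n-1}$ lies entirely in the osculating $2$-plane $P = \operatorname{span}_{\mathbb{R}}(x,v)$, and that parallel transport along $\sigma$ acts as rotation inside $P$ while fixing $P^{\perp}$ pointwise. Since we are working with $V \in \mye_\theta$, I first unpack what this membership gives: $V \in T_x S^{2n-1}$, so $\langle V,x\rangle=0$, and by definition $\langle V,v\rangle=0$. Hence $V \in P^{\perp}$, which will be the key geometric observation.

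Next I will realize $V$ as a tangent vector field along $\sigma$ by the constant extension $\widetilde{V}(y) := V$ on $\mathbb{R}^{2n}$, and verify that this restricts to a section of $TS^{2n-1}$ along $\sigma$. Indeed, for every $t$ one has
\begin{equation*}
\langle \widetilde{V}(\sigma(t)),\sigma(t)\rangle = \cos(t)\langle V,x\rangle + \sin(t)\langle V,v\rangle = 0,
\end{equation*}
so $\widetilde{V}|_\sigma$ is a tangent vector field on $S^{2n-1}$ along $\sigma$.

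I will then apply the Gauss formula for the Levi-Civita connection induced by the embedding $S^{2n-1}\hookrightarrow\mathbb{R}^{2n}$: the covariant derivative is the tangential projection of the Euclidean derivative. Since $\widetilde{V}$ is Euclidean-parallel, $\overline{\nabla}_{\dot\sigma(t)}\widetilde{V}=0$, and therefore $\nabla_{\dot\sigma(t)}\widetilde{V}=0$ as well. Thus $\widetilde{V}|_\sigma$ is a parallel vector field along $\sigma$ with initial value $V$, so by uniqueness $P_x(t)V=V$ for all $t$, as claimed. (Equivalently, one could invoke the fact already used in the proof of Proposition \ref{clean} that $P_x(t)$ is the derivative of a one-parameter subgroup of $\operatorname{SO}(2n)$ whose orbit is $\sigma$; that subgroup acts as rotation in $P$ and as the identity on $P^{\perp}$, so it fixes any $V \perp \{x,v\}$.) There is no real obstacle here — the only subtlety is to check that the two orthogonality conditions defining $\mye_\theta$ together force $V$ into $P^{\perp}$, after which the conclusion is immediate.
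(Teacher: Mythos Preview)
Your proof is correct. Both your argument and the paper's rest on the same geometric fact: parallel transport along a great circle of $S^{2n-1}$ fixes every vector orthogonal to the $2$-plane spanned by $x$ and $v$. The paper establishes this by first checking it at a ``pole'' $x_0=(0,\dots,0,1,0,\dots,0)$ (where the claim is easy to see) and then conjugating by an element $g\in\SO(2n)$ carrying $(x,J_l(x))$ to $(x_0,J_l(x_0))$ to deduce the general case. You instead verify the claim directly at an arbitrary $x$ via the Gauss formula: the constant Euclidean extension $\widetilde V$ of $V$ is tangent to the sphere along $\sigma$ (because $V\perp x$ and $V\perp v$), and since $\overline\nabla_{\dot\sigma}\widetilde V=0$ it is already parallel. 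Your route is shorter and avoids the conjugation step; the paper's route has the minor advantage of making the $\SO(2n)$-equivariance of parallel transport explicit, which is consonant with how $P_x(t)$ was introduced in the proof of Proposition~\ref{clean}. Either way the content is the same.
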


\begin{proof}
First call a point $x_0 \in S^{2m_r-1} \subset S^{2n-1}$ a {\it pole} if $x_0 = (0,...,0,1,0,...,0)$. For every $m_r \in \lbrace m_1,...,m_b \rbrace$, $S^{2m_r-1}$ contains at least one pole. Fix a pole $x_0 \in S^{2m_r-1}$. If $P_{x_0}(t)$ denotes parallel translation along the short geodesic in the direction $J_l(x_0)$ then one easily observes that if $X \in T_{x_0} S^{2n-1}$ with $\langle X, J_l(x_0) \rangle = 0$ then $P_{x_0}(t) A = A$.

Given a general $x \in S^{2m_r-1}$, let $g \in \SO (2n)$ with $g(x) =x_0$ and $g(J_l(x)) = J_l(x_0)$. Then $P_{x}(t) = g^{-1} P_{x_0}(t) g$. From this we observe that if $\langle X, J_l(x) \rangle = 0$, then 
\begin{eqnarray*}
P_{x}(t) X &=&  g^{-1} P_{x_0}(t) g X \\
&=& g^{-1} g X \\
&=& X,
\end{eqnarray*}
as required.
\end{proof}

From this we conclude that for homogeneous lens spaces the Poisson relation is an equality.

\begin{proof}[Proof of Corollary \ref{homog_result}]
If $L(q; p_1,...,p_n)$ is homogeneous, then for every $l$ there is only one $\overset{(l)}{\sim}$-class. Thus, the sum in \eqref{zeroth} has two terms, i.e. $\fix(\Phi_{\tau})$ has two connected components coming from the closed geodesics and their time-reversals. Since the two fixed point sets are isometric (via the map that is the antipodal map in each fiber of $SL$), and the Morse index and Duistermaat-Guillemin density is invariant under time reversal (see \cite[Appendix]{uribe}), the two terms are the same. Thus the sum in \eqref{zeroth} for a homogenous lens space is non-zero.     
\end{proof}

For lens spaces that are not homogeneous there can be more terms in \eqref{zeroth}. Therefore, a priori, cancellation is possible. A first step in showing that cancellations do not occur is computing the Morse index of a closed geodesic. 

\section{Computation of the Morse Index}
\label{morse_index_comp}

We briefly recall the formula for the Morse index of a closed geodesic (following \cite{ballmann_closed_geos}).  Fix $\sigma\colon \left[0, \tau \right] \to M$ a closed geodesic in a closed Riemannian manifold $(M,g)$ of length $\tau$ parametrized by arclength. Let $V_{\Lambda} (\sigma )$ be the space of piecewise $C^{\infty}$-vector fields $X$ along $\sigma$ satisfying $\langle X(t), \dot{\sigma}(t) \rangle = 0$. On this vector space we have the usual index form for the energy functional $H\colon V_{\Lambda} (\sigma ) \times V_{\Lambda} (\sigma ) \to \mathbb{R}$ given by:
\begin{eqnarray*}
H(X,Y) = \int_0^{\tau} \langle \nabla X, \nabla Y \rangle - \langle R(X, \dot{\sigma}) \dot{\sigma}, Y \rangle dt.
\end{eqnarray*}
The Morse index of $\sigma$ will be denoted by $\ind(\sigma)$ and is the index of $H$. The nullity of $\sigma$, denoted by $\mynull(\sigma)$, is $\dim \ker \left(\restr{H}{V_{\Lambda} (\sigma ) } \right)$. We also define $\ind_{\Omega}(\sigma)$ to be the index of $H$ on the space of fixed endpoint variations: $V_{\Omega}(\sigma)$, which is the collection of piecewise smooth vector fields on $\sigma$ that are orthogonal to $\dot{\sigma}$ and satisfy $X(0) = X(\tau) = 0$. We define $\mynull_{\Omega}(\sigma)$ similarly. 

Let $\mu(t)$ be the number of linearly independent Jacobi fields $Y(t)$ along $\sigma$ such that $Y(0) = Y(\tau) = 0$. Then the {\it Morse Index Theorem} states (see \cite{milnor_morse_theory}) that:
\begin{equation*}
\mynull_{\Omega}(\sigma) = \mu(\tau),
\end{equation*}
and
\begin{equation*}
\ind_{\Omega}(\sigma) = \sum_{0<t<\tau} \mu(t).
\end{equation*}
To compute the index for a variation that does not have fixed endpoints, we recall the following linear algebra fact: Let $H$ be a symmetric bilinear form on a finite dimensional real vector space $V$. For any subspace $W \subset V$ we have:
\begin{equation}
\label{indexeq}
\ind H = \ind \restr{H}{W} + \ind \restr{H}{W^{\perp}} + \dim(W \cap W^{\perp}) - \dim(W \cap \ker H),
\end{equation}
where $W^{\perp} = \lbrace X \in V; \hspace{4pt} H(X,Y) = 0 \text{ for all } Y \in W \rbrace$. 
To use (\ref{indexeq}), we fix $0 = t_0<t_1<...<t_k = \tau$ such that $\restr{\sigma}{[t_i,t_{i+1}]}$ has no conjugate points. Then $V_{\Lambda}(\sigma)$ is a direct sum of $V_{\Lambda}^1(\sigma)$ and $V_{\Lambda}^2(\sigma)$ where:
\begin{equation*}
V^1_{\Lambda}(\sigma) = \lbrace X \in V_{\Lambda}(\sigma); \hspace{4 pt} \restr{X}{[t_i, t_{i+1}]} \text{ is a Jacobi field } \rbrace,
\end{equation*}
and
\begin{equation*}
V^2_{\Lambda}(\sigma) = \lbrace X \in  V_{\Lambda}(\sigma); \hspace{4 pt} X(t_i) = 0 \text{ for every } i \rbrace. 
\end{equation*}
Observe that $V^1_{\Lambda}( \sigma)$ and $V^2_{\Lambda}(\sigma)$ are orthogonal with respect to $H$, and $H$ is positive definite on $V^2_{\Lambda}(\sigma)$. Therefore, $\ind(\sigma) = \ind \restr{H}{V^1_{\Lambda}(\sigma)}$ (hence is finite). Now set $W = V^1_{\Lambda}(\sigma) \cap V_{\Omega}(\sigma)$. Then $\ind \restr{H}{W} = \ind_{\Omega}(\sigma)$. One observes that $W^{\perp}$ is the set of smooth Jacobi fields $Y(t)$ along $\sigma$ with $Y(0) = Y(\tau)$; however, it is not necessary that $\nabla Y(0) = \nabla Y(\tau)$. Thus, $\dim(W \cap W^{\perp}) = \mu(\tau)$. One also observes that $W \cap \text{ker}(H)$ is the collection of periodic Jacobi fields vanishing at 0. Moreover, from the formula for the index form above, it is clear that if $X$, $Y \in W^{\perp}$:
\begin{equation*}
H(X,Y) = \langle \nabla X(\tau^-) - \nabla X(0^+), Y(0) \rangle.
\end{equation*}
Suppose that $\sigma(0) = x$ and $\dot{\sigma}(0) = v$. Then, if $\theta = (x, v) \in TM$, using the geodesic flow and the standard symplectic form $\Omega$ on $TM$, we write:
\begin{equation*}
H(X,Y) = - \Omega \left( (D_{\theta} \Phi_{\tau} - \id)(X(0), \nabla X(0)), (Y(0),\nabla(Y(0)) \right),
\end{equation*}
for $X,Y \in W^{\perp}$.
Define the {\it concavity form} on $\left(D_{\theta} \Phi_{\tau} - \id \right)^{-1}(0, \mye_{\theta})$ by:\footnote{Since $X(t)$ and $Y(t)$ must be orthogonal to $\dot{\sigma}(t)$, then the initial conditions for $X$ and $Y$ must be in $\mye_{\theta} \oplus \mye_{\theta}$.} 
\begin{equation*}
\tilde{H}(X,Y) = -\Omega ((D_{\theta} \Phi_{\tau} - \id)X,Y).
\end{equation*}
Thus, $\ind \restr{H}{W^{\perp}} = \ind \tilde{H}$. Using Equation (\ref{indexeq}) we have the following theorem:

\begin{theorem}[\cite{ballmann_closed_geos} Section 1]
\label{morse_generalization}
With the notation as above, if $\sigma$ is a smoothly closed geodesic then:
\begin{equation}
\label{indexeq2}
\ind(\sigma) = \ind_{\Omega}(\sigma) + \mu(\tau) - \dim \left[ (0,\mye_{\theta}) \cap \ker \left( D_{\theta} \Phi_{\tau} - \id \right) \right] + \ind \tilde{H}.
\end{equation}
\end{theorem}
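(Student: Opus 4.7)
The plan is to apply the linear algebra identity \eqref{indexeq} to $H$ on $V^1_\Lambda(\sigma)$ with $W = V^1_\Lambda(\sigma)\cap V_\Omega(\sigma)$. The $H$-orthogonal splitting $V_\Lambda(\sigma) = V^1_\Lambda(\sigma) \oplus V^2_\Lambda(\sigma)$ with $H$ positive definite on $V^2_\Lambda(\sigma)$ gives $\ind(\sigma) = \ind H|_{V^1_\Lambda(\sigma)}$, and the same argument applied to $V_\Omega(\sigma)$ gives $\ind_\Omega(\sigma) = \ind H|_W$, as already noted. It therefore suffices to identify the remaining three quantities in \eqref{indexeq}: $\ind H|_{W^\perp}$, $\dim(W\cap W^\perp)$, and $\dim(W\cap \ker H)$.

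The key computation is that for $X, Z \in V^1_\Lambda(\sigma)$, integrating $H(X,Z)$ by parts on each subinterval $[t_i, t_{i+1}]$, invoking the Jacobi equation to cancel the curvature term, and using the loop identification $Z(0)=Z(\tau)$ yields
\[
H(X,Z) = \bigl\langle \nabla X(\tau^-) - \nabla X(0^+),\, Z(0)\bigr\rangle + \sum_{i=1}^{k-1} \bigl\langle \nabla X(t_i^-) - \nabla X(t_i^+),\, Z(t_i)\bigr\rangle.
\]
Restricting $Z$ to $W$ makes $Z$ vanish at $0$ and $\tau$ and the first term drops out, so $X \in W^\perp$ iff the interior jumps of $\nabla X$ vanish, i.e.\ $X$ is a smooth Jacobi field (with $X(0) = X(\tau)$ automatic in $V^1_\Lambda(\sigma)$). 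Hence $W \cap W^\perp$ consists of smooth Jacobi fields with $X(0) = X(\tau) = 0$, giving $\dim(W\cap W^\perp) = \mu(\tau)$. Parameterizing $W^\perp$ by initial data $(X(0), \nabla X(0)) \in \mye_\theta \oplus \mye_\theta$, the loop condition $X(\tau) = X(0)$ translates to $(D_\theta\Phi_\tau - \id)(X(0),\nabla X(0)) \in (0,\mye_\theta)$, and the surviving boundary term in the displayed formula is precisely the concavity form $\tilde H$ (a direct calculation with the standard symplectic form $\Omega$). Thus $\ind H|_{W^\perp} = \ind \tilde H$.

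For the last term, $X \in \ker H$ requires the right-hand side of the displayed equation to vanish for every $Z \in V^1_\Lambda(\sigma)$. Test fields supported near each interior $t_i$ force $\nabla X$ continuous at $t_i$, making $X$ a smooth Jacobi field; test fields with $Z(0) = Z(\tau) \neq 0$ then force $\nabla X(0^+) = \nabla X(\tau^-)$; together with $X(0) = X(\tau)$ from $V^1_\Lambda(\sigma)$, this says $X$ is a \emph{periodic} Jacobi field. Intersecting with $W$ adds $X(0) = X(\tau) = 0$, so $W\cap \ker H$ is in bijection (via $X \mapsto (0, \nabla X(0))$) with the space of pairs $(0, B) \in (0, \mye_\theta)$ fixed by $D_\theta\Phi_\tau$, giving $\dim(W\cap \ker H) = \dim[(0,\mye_\theta) \cap \ker(D_\theta\Phi_\tau - \id)]$. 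Substituting all four identifications into \eqref{indexeq} yields \eqref{indexeq2}. The main technical point is producing enough flexibility in $V^1_\Lambda(\sigma)$ to separately detect each boundary contribution by integration by parts; this works because on each subinterval the no-conjugate-point hypothesis makes the Jacobi boundary-value problem uniquely solvable, so one can prescribe the nodal values of test fields freely.
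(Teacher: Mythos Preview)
Your proof is correct and follows essentially the same route as the paper's argument preceding the theorem: apply the linear-algebra identity \eqref{indexeq} on $V^1_\Lambda(\sigma)$ with $W = V^1_\Lambda(\sigma)\cap V_\Omega(\sigma)$, use integration by parts to identify $W^\perp$ with smooth Jacobi fields satisfying $X(0)=X(\tau)$, and read off $\dim(W\cap W^\perp)=\mu(\tau)$, $\ind H|_{W^\perp}=\ind\tilde H$, and $W\cap\ker H$ as the periodic Jacobi fields vanishing at $0$. You give slightly more justification (the role of the no-conjugate-point hypothesis in prescribing nodal values of test fields) than the paper does, but the approach is the same.
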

\subsection{The Morse Index of Closed Geodesics on Lens Spaces}

First we compute the Morse index for a closed geodesic of length $2 \pi k$ (if $q$ is odd) or $k \pi$ (if $q$ is even) with $k = 1,2,...$. Let $\sigma$ denote the geodesic. Then in the odd (respectively even) case $\sigma$ lifts to a closed geodesic on $S^{2n-1}$ (respectively $\mathbb{R}\myP^{2n-1}$). Moreover, in both cases the geodesic flow fixes all of the unit tangent bundle. From this we conclude that the index of the concavity form is identically 0, and the third term in \eqref{indexeq2} is equal to $2n-2$. Thus:
\begin{equation*}
\ind(\sigma) = \begin{cases} (2n - 2)(2k - 1) &\mbox{if $q$ is odd} \\ 
(2n - 2)(k-1) & \mbox{if $q$ is even} \end{cases}.
\end{equation*}
In this case, it is clear that the Morse index only depends on the length of the closed geodesic.

Now, we consider lengths coming from short geodesics and their iterates: Let $ \tau$ be the length of a closed geodesic on a lens space $L = L(q; p_1,..., p_n)$ that cannot be lifted to a closed geodesic on the Riemannian cover. Then $\tau = 2\pi k + \frac{2 \pi}{q}l$ for some $k \in \mathbb{N}$ and $l = 1,...,q-1$ in the odd case, and $\tau = \pi k + \frac{2 \pi}{q}l$ for some $k \in \mathbb{N}$ and $l=1,...,\frac{q}{2}-1$ in the even case. Let $\lbrace p_1 = p_{j_1},..., p_{j_{m_1}}; ...; p_{j_{m_{b-1}+1}},...,p_{j_{m_b}} \rbrace$ be a partition of $\lbrace p_1,...,p_n \rbrace$ into equivalence classes with respect to $\overset{(l)}{\sim}$. Moreover let $\overline{\theta} \in \fix (\Phi_{\tau})$ be contained in a connected component which corresponds to an $\overset{(l)}{\sim}$-class that is realized by $k$. 

We compute each term on the right hand side of \eqref{indexeq2}. One readily sees that $\ind_{\Omega}(\sigma)$ depends only on $\tau$ (using the Morse index theorem):
\begin{equation}
\ind_{\Omega}(\sigma) = (2n-2) \left( \floor{\frac{\tau}{\pi}}-1 \right).
\end{equation}
Similarly, $\mu(\tau) = 0$. By Lemma \ref{char} and Proposition \ref{clean} we see that $\ker \left[ D_{\overline{\theta}} \Phi_{\tau} - \id \right]$ is the graph of a linear function, therefore $\dim \left[ (0, \mye_{\overline{\theta}}) \cap \ker \left[ D_{\overline{\theta}} \Phi_{\tau} - \id \right] \right] = 0$. This discussion shows that the first three terms in the right hand side of \eqref{indexeq2} only depend on $\tau$.

Now we compute the index of the concavity form. As above, let $\theta = (x,v) \in S(S^{2n-1})$ be a lift of $\overline{\theta}$ and let $\sigma$ be the lift onto $S^{2n-1}$ of the short geodesic in $L$. We want to find conditions on $X$, a Jacobi field along $\sigma$ such that $\tilde{H}(\overline{X},\overline{X})<0$. Thus we want to find when $\langle \nabla \overline{X}(0),\overline{X}(0) \rangle > \langle \nabla \overline{X}(\tau), \overline{X}(0) \rangle$. Let $(A,B)$ be initial conditions for $X$. Since $X$ is a Jacobi field on the sphere we have:
\begin{equation}
\label{jacobifield1}
X(\tau) = \cos(\tau) P_{x}(\tau) A + \sin(\tau) P_{x}(\tau) B.
\end{equation}
By hypothesis, $X(0)$ and $X(\tau)$ are identified in $L$. Therefore:
\begin{equation}
\label{jacobifield2}
B = \frac{1}{\sin(\tau)} \left[ T^{k} - \cos(\tau) \id \right] A.
\end{equation}
We also have:
\begin{equation}
\label{jacobifield3}
\nabla X (\tau) = -\sin(\tau) P_{x}(\tau) A + \cos(\tau) P_{x}(\tau) B. 
\end{equation}
Using (\ref{jacobifield1})-(\ref{jacobifield3}) we find that for $X$ to satisfy $\tilde{H}(\overline{X},\overline{X}) <0$ we must have:
\begin{eqnarray*}
\frac{1}{\sin(\tau)}\langle T^{k} A, A \rangle - \frac{\cos(\tau)}{\sin(\tau)} |A|^2 >&-\sin(\tau)\langle T^{-k} A, A \rangle + \cos(\tau) \langle T^{-k} B,A \rangle \\
=&-\sin(\tau)\langle A, T^{k} A \rangle +  \cos(\tau) \langle B, T^{k} A \rangle \\
=&-\sin(\tau)\langle A, T^{k} A \rangle + \frac{\cos(\tau)}{\sin(\tau)} \langle T^{k} A, T^{k}A \rangle \\ 
&-\frac{\cos^2(\tau)}{\sin(\tau)}\langle A, T^{k}A \rangle \\ 
\end{eqnarray*} 
Which is equivalent to:
\begin{equation}
\label{condition}
\frac{1}{\sin(\tau)} \langle T^{k}A, A \rangle >  \frac{\cos(\tau)}{\sin(\tau)} |A|^2.
\end{equation}

For ease of computation, let us assume that $x$ is a vector of zeros with 1 in the $j$-th slot. Then $\mye_{\theta} \subset T_x S^{2n-1}$ has the following orthonormal basis:  
\begin{equation*}
\lbrace e_1, J(e_1),..., \hat{e}_j,\widehat{J(e_j)},..., e_{2n}, J(e_{2n}) \rbrace, 
\end{equation*}
where $e_i$ is the standard basis vector in $\mathbb{R}^{2n}$ and $\hat{v}$ denotes the fact that $v$ is missing from the collection. Computing with respect to this basis, the following proposition is clear: 

\begin{proposition}
\label{index}
Let $\tau$ be the length of a short geodesic and $l$ as above. Fix a partition of $\lbrace p_1,..., p_n \rbrace$ into $\overset{(l)}{\sim}$-equivance classes. Let $(\overline{x}, \overline{v}) = \overline{\theta} \in \fix (\Phi_{\tau})$ with $\overline{x}$ $\overset{(l)}{\sim}$-adapted corresponding to the $r$th-equivalence class. Moreover, suppose that $k_r$ realizes this equivalence class. Then the index of the concavity form for a geodesic based at $\overline{x}$ in the direction $\overline{v}$ is given by:
\begin{equation}
\label{concavity}
2  \# \left \lbrace p_i ; \hspace{4 pt} \frac{\cos\left( 2 \pi  k_r p_i/q \right)}{\sin(\tau)} > \frac{\cos(\tau)}{\sin(\tau)} \right \rbrace.
\end{equation}
\end{proposition}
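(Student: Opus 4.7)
The plan is to read off the index of the concavity form directly from condition \eqref{condition} by exploiting the block structure of $T$. By the derivation immediately preceding the proposition, a tangent vector $(A,B)\in (D_\theta\Phi_\tau-\id)^{-1}(0,\mye_\theta)$ (parametrized by $A\in\mye_\theta$, with $B$ determined by $A$) satisfies $\tilde H(X,X)<0$ if and only if $A$ satisfies \eqref{condition}. Thus the index equals the dimension of a maximal subspace of $\mye_\theta$ on which the symmetric quadratic form
\begin{equation*}
F(A):=\frac{1}{\sin\tau}\bigl(\langle T^{k_r}A,A\rangle-\cos(\tau)|A|^2\bigr)
\end{equation*}
is strictly positive; it suffices to compute the signature of $F$ on $\mye_\theta$.

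Next I would diagonalize $F$ in the standard basis, using that $T^{k_r}$ is block diagonal with $2\times 2$ rotation blocks $R(2\pi p_i k_r/q)$ on each plane $\myspan(e_{2i-1},e_{2i})$. For any vector $w$ in the plane of a planar rotation $R(\phi)$ one has $\langle R(\phi)w,w\rangle=\cos(\phi)|w|^2$, so writing $A^i:=\pi_i(A)$ gives
\begin{equation*}
F(A)=\sum_{i=1}^{n}\frac{\cos(2\pi p_i k_r/q)-\cos\tau}{\sin\tau}\,|A^i|^2.
\end{equation*}
Hence in the basis $\{e_1,J(e_1),\ldots,\widehat{e_j},\widehat{J(e_j)},\ldots,e_{2n},J(e_{2n})\}$ of $\mye_\theta$ (obtained after the reduction to $x=e_j$), $F$ is block diagonal and on each 2-plane is a scalar multiple of the identity. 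Each surviving block $i\neq\lceil j/2\rceil$ therefore contributes $2$ to the positive index of $F$ exactly when $[\cos(2\pi p_i k_r/q)-\cos\tau]/\sin\tau>0$, i.e. precisely when the inequality in \eqref{concavity} holds.

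It remains to check that the omitted block $i=\lceil j/2\rceil$ (the one containing the basepoint, which automatically lies in the $r$-th $\overset{(l)}{\sim}$-class) may be harmlessly included in the count appearing in \eqref{concavity}. This is true because $\cos(2\pi p_{\lceil j/2\rceil}k_r/q)=\cos(2\pi l/q)$ by definition of ``$k_r$ realizes the $r$-th class'', and this coincides with $\cos\tau$ on the relevant geodesic arc, so the strict inequality in \eqref{concavity} fails for that index and the formula is unchanged. The reduction to $x=e_j$ used above is justified by applying an ambient rotation in $\SO(2n)$ that commutes blockwise with $T$ (i.e.\ one lying in the maximal torus of $T^n\subset\SO(2n)$); such a rotation is an isometry of $S^{2n-1}$ commuting with $T^{k_r}$, so it preserves both $\fix(\Phi_\tau)$ and the concavity form up to isomorphism, and any $\overset{(l)}{\sim}$-adapted $x$ in the $r$-th class can be brought into this standard form.

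The main obstacle is not the diagonalization, which is immediate once the block structure is invoked, but rather the sign bookkeeping: one must check that condition \eqref{condition} really is equivalent to $F(A)>0$ (taking into account the sign of $\sin\tau$), and in the even-$q$ case one must verify that the power of $T$ appearing in the Jacobi-field identification at time $\tau=\pi k+2\pi l/q$ agrees, up to the central element $T^{q/2}=-\id$, with the realizer $k_r$; the factors of $-1$ introduced thereby appear symmetrically on both sides of the inequality in \eqref{concavity} and so do not change the count.
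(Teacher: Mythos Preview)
Your proof is correct and follows the same approach as the paper: the paper derives condition \eqref{condition}, reduces to the case $x=e_j$, and then simply says ``Computing with respect to this basis, the following proposition is clear.''  You fill in exactly that computation, diagonalizing $F(A)=\frac{1}{\sin\tau}(\langle T^{k_r}A,A\rangle-\cos\tau\,|A|^2)$ block by block.

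One small correction: your justification for the reduction to $x=e_j$ via the maximal torus $T^n\subset\SO(2n)$ does not quite work.  A torus element rotates each coordinate $2$-plane independently but preserves the norm of each $\pi_i(x)$, so if an $\overset{(l)}{\sim}$-adapted point has nonzero projection onto more than one $2$-plane (which happens whenever the $r$-th class has more than one element) it cannot be carried to a pole.  There are two easy fixes.  First, the index of the concavity form is integer-valued and varies continuously on the connected component $\Theta\subset\fix(\Phi_\tau)$, hence is constant there; and as noted in Lemma~\ref{little_lemma}, each component contains a pole, so it suffices to compute at a pole.  Second---and this is really already implicit in your second paragraph---your diagonalization $F(A)=\sum_i c_i|A^i|^2$ with $c_i=[\cos(2\pi p_i k_r/q)-\cos\tau]/\sin\tau$ holds for \emph{every} $A\in\mathbb{R}^{2n}$, independent of $x$.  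Since $c_i=0$ for every $p_i$ in the $r$-th class and both $x$ and $v=J_l(x)$ lie in the span of those $2$-planes, passing from $\mathbb{R}^{2n}$ to $\mye_\theta=\{x,v\}^\perp$ removes only null directions of $F$ and therefore does not change the positive index.  Either route closes the gap, and your handling of the omitted block (the strict inequality fails for $p_i$ in the $r$-th class) is then exactly the right observation.
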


\begin{remark}
Since the only other choice of $k_r$ that realizes a fixed equivalence class is the negation (modulo $q$) of the original, it is clear that the index of the concavity form does not depend on the choice of $k_r$ (and therefore does not depend on the orientation of the geodesic). 
\end{remark}

From Proposition \ref{index} it is clear that the Morse index depends not only on the length of the closed geodesic, but on the component of $\fix(\Phi_{\tau})$ containing $\overline{\theta}$. Thus, the terms in \eqref{zeroth} can in general have opposite signs, which means cancellation could still occur. This is not the case for the systole of $L(q; p_1,...,p_n)$ (which in this case corresponds to the length of the shortest closed geodesic).

\begin{proof}[Proof of Theorem \ref{systole_result}]
Set $\tau = \frac{2 \pi}{q}$.   
Let $\lbrace \Theta_j^{\pm} \rbrace_{j=1}^{r} \subset \fix (\Phi_{\tau})$ be the fixed point sets of largest dimension, with $\pm$ denoting the orientation of the geodesics contained in each set. We need to show that the following sum is non-zero:
\begin{equation*}
\sum_{j=1}^r \frac{i^{- \sigma_j}}{2 \pi} \left(\int_{\Theta_j^+} \text{ d}\mu_j+ \int_{\Theta_j^-} \text{ d}\mu_j \right),
\end{equation*}
where $\sigma_j$ denotes the Morse index corresponding to a closed geodesic loop with initial conditions given by $\overline{\theta}_j \in \Theta_j^{\pm}$. It suffices to show that the Morse indices are equal. By \eqref{indexeq2} and the above discussion, the only contributing term towards the Morse index that could differ depending on $\overline{\theta}_j$ is the index of the concavity form. 

For fixed $j$ we know that if $(\overline{x}_j, \overline{v}_j) = \overline{\theta}_j$, then $\overline{x}_j$ is $\overset{(1)}{\sim}$-adapted corresponding to some equivalence class. If $k$ realizes this equivalence class, then for any $p_i$ in the equivalence class we have: $\cos(2 \pi k p_i/q) = \cos(2 \pi /q)$. Notice that since $1$ and $p_i$ are both units in the ring $\mathbb{Z}_q$, $k$ must be a unit also. This means that for any other $p_l \overset{(1)}{\nsim} p_i$ we know that $k p_l \not\equiv 0$ (mod $q$). By \eqref{concavity}, this implies that the index of the concavity form for $\overline{\theta}_j$ is zero. Thus, the Morse index does not depend on which component $\overline{\theta}_j$ is from. Indeed, $\sigma_j =0$, independent of $j$. 
\end{proof}

So far, we have computed the singular support of the wave trace for homogeneous lens spaces. The theorem above demonstrates that the systole is always in the singular support of the wave trace on an arbitrary lens space. However, since the Morse index in general depends on more than just the length of the corresponding geodesic, to rule out cancellation in \eqref{zeroth} we must compute the Duistermaat-Guillemin density. 

\section{Computation of the Duistermaat-Guillemin density}
\label{DG_density_section}

In this section we construct the Duistermaat-Guillemin density (DG-density), first constructed in \cite[Section 4]{duistermaat_spec}. Let $(M,g)$ be a compact Riemannian manifold and assume that $M$ is clean. Let $\tau \in \specl(M,g)$ and let $\Theta \subset \fix(\Phi_{\tau})$ be connected. Let $\tilde{\Theta} = \{ t X_x ; \hspace{4 pt} X_x \in \Theta \text{ and } t>0 \}$. Using the symplectic form on $TM$ we construct a canonical density $\tilde{\mu}^{\tau}$ on $\tilde{\Theta}$ and obtain a canonical density on $\Theta$ by dividing by $\text{dvol}$ in the transverse direction. 

First, notice that $\tilde{\Theta}$ is a clean fixed point set of $\Phi_{\tau}$ (in $TM$). Let $\theta \in \tilde{\Theta}$ and $P_\theta = \eye - D_{\theta} \Phi_{\tau} \colon V \to V$ where $V = T_{\theta}TM$. Following the construction of the DG-density given in the appendix of \cite{uribe}, we form:
\begin{itemize}
\item $E = \{e_1,...,e_k \}$, a basis for $W = T_{\theta} \tilde{\Theta}$;
\item Let $W^{\Omega}$ be the symplectic complement of $W$ in $V$;
\item Let $F = \{f_1,...,f_k\}$ be a basis for the complement of $W^{\Omega}$ satisfying
\begin{equation*}
\Omega(e_i,f_j) = \delta_{ij};
\end{equation*}
\item Let $V = \{v_1,...,v_{2n-k}\}$ be a basis for the complement of $W$ in $V$.
\end{itemize}
The following lemma gives a formula for the DG density with respect to this basis.
\begin{lemma}[\cite{uribe} Section A.1]
\label{uribe_dg}
With the notation as above, $\text{ker}(P) = W$ and the image of $P$ is $W^{\Omega}$. So $P(V) \oplus F$ is a basis for $V$. If $\nu$ is a $\frac{1}{2}$-density on $T_{\theta} TM$, then the DG-density $\tilde{\mu}^{\tau}$ on $\tilde{\Theta}$ is given by:
\begin{equation}
\label{uribe_eq}
\tilde{\mu}^{\tau}(e) = \frac{\nu(v \wedge e)}{\nu(Pv \wedge f)},
\end{equation}
where $\displaystyle e = \bigwedge_{i} e_i$. The expression is independent of $f,v,$ and $\nu$. Moreover, $\tilde{\mu}^{\tau} = \tilde{\mu}^{-\tau}$. 
\end{lemma}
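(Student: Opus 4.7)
The plan is to proceed in four stages: establish the two structural claims about $P$, verify the formula's independence of $\nu$, $v$, and $f$, and then deduce the symmetry $\tilde{\mu}^\tau = \tilde{\mu}^{-\tau}$.

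First I would establish $\ker P = W$ by appealing to the cleanliness hypothesis applied to $\tilde{\Theta}$, which is clean as a cone over the clean submanifold $\Theta \subset SM$. For the image claim I would use that $D_\theta \Phi_\tau$ is a symplectic linear map (the geodesic flow being Hamiltonian): for $u \in V$ and $w \in W = \ker P$ (so that $D\Phi_\tau w = w$),
\[
\Omega(Pu, w) = \Omega(u, w) - \Omega(D\Phi_\tau u, D\Phi_\tau w) = 0,
\]
showing $\text{image}(P) \subseteq W^\Omega$; the rank--nullity theorem gives $\dim \text{image}(P) = 2n-k = \dim W^\Omega$, so equality holds and $\{Pv_1,\ldots,Pv_{2n-k}\}$ is a basis of $W^\Omega$.

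Next I would check that the right-hand side of \eqref{uribe_eq} is independent of the auxiliary choices. Any two half-densities on $V$ differ by a positive scalar that cancels in the ratio. If $v'_i = \sum_j A_{ij} v_j + w_i$ (with $A$ invertible and $w_i \in W$) is another basis of a complement of $W$, then the $w_i$-terms vanish when wedged with $e$, yielding $v' \wedge e = (\det A)\, v \wedge e$; simultaneously $Pv'_i = \sum_j A_{ij} Pv_j$ since $w_i \in \ker P$, so the factor $\det A$ cancels in numerator and denominator. Any two admissible choices of $f$ differ by a shift $f'_j = f_j + w'_j$ with $w'_j \in W^\Omega$, because $\Omega$ pairs $W$ with $W^\Omega$ trivially so the duality $\Omega(e_i, f_j) = \delta_{ij}$ is preserved; expanding the wedge, every term containing some $w'_j$ wedges $Pv$ with an extra element of $W^\Omega$, and since $Pv$ already spans $\Lambda^{2n-k} W^\Omega$ all such terms vanish, leaving $Pv \wedge f' = Pv \wedge f$.

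For the symmetry $\tilde{\mu}^{\tau} = \tilde{\mu}^{-\tau}$ I compute
\[
P_- := I - D\Phi_{-\tau} = -(D\Phi_\tau)^{-1}(I - D\Phi_\tau) = -(D\Phi_\tau)^{-1} P,
\]
so $P_- v = -(D\Phi_\tau)^{-1}(Pv)$. Since $D\Phi_\tau$ is symplectic and preserves $W$, it also preserves $W^\Omega$, so $(D\Phi_\tau)^{-1}|_{W^\Omega}$ is a well-defined automorphism of $W^\Omega$; writing its matrix in the basis $\{Pv_i\}$ as $B$, one obtains $\nu(P_- v \wedge f) = |\det B|^{1/2}\, \nu(Pv \wedge f)$. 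The main obstacle is showing $|\det B| = 1$, which I would handle by symplectic reduction: setting $W_0 = W \cap W^\Omega$ (the null space of $\Omega|_{W^\Omega}$), the quotient $W^\Omega / W_0$ inherits a nondegenerate symplectic form and $D\Phi_\tau$ descends to a symplectic automorphism with determinant $1$; since $D\Phi_\tau|_{W_0}$ is the identity (as $W_0 \subseteq W = \ker P$), multiplicativity of determinants yields $\det(D\Phi_\tau|_{W^\Omega}) = 1$, which gives the desired symmetry and completes the proof.
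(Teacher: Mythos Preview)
The paper does not prove this lemma; it is quoted from the appendix of \cite{uribe} and used as a black box. Your argument is correct for what it sets out to do: the kernel claim is exactly cleanliness, the image claim follows from symplecticity of $D\Phi_\tau$ plus rank--nullity, the independence of $\nu$, $v$, and $f$ is handled correctly (in particular your observation that the duality $\Omega(e_i,f_j)=\delta_{ij}$ forces any two admissible $f$'s to differ by elements of $W^\Omega$, which then wedge to zero against $Pv$), and the time-reversal symmetry via the symplectic reduction $W^\Omega/(W\cap W^\Omega)$ is the right route, since $D\Phi_\tau$ acts as the identity on $W\cap W^\Omega \subset W$ and as a genuine symplectic map (hence with determinant one) on the quotient.

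One caveat worth flagging: the full content of the lemma as stated is that formula~\eqref{uribe_eq} coincides with the canonical density produced by the Duistermaat--Guillemin clean-intersection calculus for Fourier integral operators. Your argument establishes the well-definedness and symmetry of the right-hand side of~\eqref{uribe_eq} but does not, and without invoking that FIO machinery cannot, make this identification. Since the present paper only uses~\eqref{uribe_eq} as a working formula for computation, your verification covers precisely what is needed here; the deeper identification is what \cite{uribe} supplies.
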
 

Throughout this section fix a length $\tau = \frac{2 \pi}{q}l$ of a closed geodesic on the lens space $L(q; p_1,...,p_n)$ (since the geodesic flow is periodic, it suffices to consider $0< \tau < 2 \pi$ [$\pi$ in case $q$ is even]). Let $\lbrace p_1 = p_{j_1},..., p_{j_{m_1}}; ...; p_{j_{m_{b-1}+1}},...,p_{j_{m_b}} \rbrace$ be a partition of $\lbrace p_1,...,p_n \rbrace$ into equivalence classes with respect to $\overset{(l)}{\sim}$. Let $\Theta \subset \fix(\Phi_{\tau})$ be a connected submanifold. Let $\overline{\theta} = (\ol{x}, \ol{v}) \in \Theta$. By Corollary \ref{gen_length} we know that $\ol{x}$ is $\overset{(l)}{\sim}$-adapted corresponding to some equivalence class. Let $p_*$ be a representative for this equivalence class. Moreover, suppose that the equivalence class corresponding to $\ol{x}$ has $m$ elements and is realized by $k$. With this notation, we have:

\begin{theorem}
\label{DG_density}
The Duistermaat-Guillemin density on $\Theta$ is given by:
\begin{equation}
\mu = \frac{1}{\sqrt{\tau}}2^{-(m-1)/4} |\sin(\tau)|^{-(m-1)/2} \left(\prod_{p_i \overset{(l)}{\nsim} p_*} 2|\cos(2 \pi k p_i/q) - \cos(\tau) | \right)^{-1} |\dvol|,
\end{equation}
where $|\dvol|$ is the Riemannian density (induced from the Sasaki metric) on $\Theta$. 
\end{theorem}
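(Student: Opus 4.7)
The plan is to apply Lemma \ref{uribe_dg} at a lift $\theta = (x,v) \in TS^{2n-1}$ of $\bar\theta$, chosen so that $v = J_l(x)$ and $x$ lies in the sub-sphere $S^{2m-1}$ cut out by the blocks $V_i$ with $p_i \overset{(l)}{\sim} p_*$, and then to convert the resulting density on $\tilde\Theta$ into one on $\Theta$ by dividing by the transverse radial direction. Working in the Sasaki horizontal/vertical decomposition $T_\theta TS^{2n-1} = H(\theta) \oplus V(\theta)$, Lemma \ref{char} together with the cone structure of $\tilde\Theta$ furnishes a natural basis for $W := T_\theta \tilde\Theta$: the geodesic direction $(v,0)$, the radial direction $(0,v)$, and the vectors $(A_i, J_l(A_i))$ for an orthonormal basis $\{A_1,\ldots,A_{2m-2}\}$ of $U_1 := T_xS^{2m-1}\cap \mye_\theta$. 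The symplectic form $\Omega((X_1,Y_1),(X_2,Y_2)) = \langle X_1, Y_2\rangle - \langle X_2, Y_1\rangle$ combined with the antisymmetry of $J_l$ makes $W$ a symplectic subspace, and a direct calculation of the symplectic-complement conditions shows that
\begin{equation*}
W^{\Omega} = \bigl\{(A, -J_l(A)) : A \in U_1\bigr\} \oplus \bigoplus_{p_i \overset{(l)}{\nsim} p_*} (V_i \oplus V_i).
\end{equation*}

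The key computation is $\det(P|_{W^{\Omega}})$ where $P := \eye - D_\theta\Phi^L_\tau$. The covering $\phi\colon S^{2n-1}\to L$ yields $D_{\bar\theta}\Phi^L_\tau = T_*^{-k}\circ D_\theta \Phi^{S^{2n-1}}_\tau$ under the identification $D_\theta\phi$, and by Lemma \ref{little_lemma} the sphere's Jacobi propagation acts on $\mye_\theta\oplus\mye_\theta$ as $(A,B)\mapsto(\cos\tau\,A+\sin\tau\,B,-\sin\tau\,A+\cos\tau\,B)$. On each transverse block $V_i\oplus V_i$, setting $\phi_i := 2\pi kp_i/q$, the four eigenvalues of $D\Phi^L_\tau$ are $e^{\pm i(\tau\pm\phi_i)}$, yielding $\lvert\det(P|_{V_i\oplus V_i})\rvert = 4(\cos\phi_i-\cos\tau)^2$. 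On the anti-graph piece one verifies directly that $P$ preserves $\{(A,-J_l A)\}$ and reduces to $A\mapsto(\eye-T_*^{-2k})A$ on $U_1$; complexifying $\bigoplus_{p_i\overset{(l)}{\sim} p_*}V_i \cong \mathbb{R}^{2m}$ diagonalizes $T_*^{-2k}$ with eigenvalues $e^{\pm 2i\tau}$ of multiplicity $m$ each, and the single complex linear constraint defining $U_1^{\mathbb{C}}$ drops these multiplicities to $m-1$, giving $\lvert\det((\eye-T_*^{-2k})|_{U_1})\rvert = 4^{m-1}\sin^{2m-2}\tau$. Multiplying the two pieces,
\begin{equation*}
\lvert\det(P|_{W^{\Omega}})\rvert = 4^{m-1}\sin^{2m-2}\tau\cdot\prod_{p_i\overset{(l)}{\nsim} p_*}4(\cos\phi_i-\cos\tau)^2.
\end{equation*}

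The density is then assembled via \eqref{uribe_eq}. Taking a Sasaki orthonormal basis of $T_\theta TS^{2n-1}$ as reference $\tfrac12$-density and constructing the dual basis $f$ from $\Omega(e_i,f_j) = \delta_{ij}$, the pairings $\Omega((v,0),(0,v)) = 1$ and $\Omega((A_j,J_lA_j),(A_k,J_lA_k)) = -2\langle J_lA_j,A_k\rangle$ force factors of $\tfrac12$ in $f$, so that $f = 4^{-(m-1)}e$ up to sign. Combining this with the determinant above and then dividing by the Sasaki Riemannian density on $\tilde\Theta$ (whose Gram determinant at $e$ equals $2^{2m-2}$ because $\|(A_i,J_lA_i)\|^2 = 2$) converts the ratio from Uribe's formula into a multiple of $|\dvol_{\tilde\Theta}|$. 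The final step is to divide by the volume element in the transverse radial direction with the period-normalization appropriate to the Hamiltonian flow of $|v|$; this contributes the factor $1/\sqrt\tau$, while the $\tfrac12$-density square roots taken throughout produce the half-powers $2^{-(m-1)/4}|\sin\tau|^{-(m-1)/2}$ of the statement.

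The main obstacle will be the simultaneous bookkeeping of three normalizations: the combinatorial factor from the dual-basis construction (which requires careful tracking of $\tfrac12$-factors and signs in $f$), the conversion between the symplectic $\tfrac12$-density and the Sasaki Riemannian density via the Gram determinant, and the period-normalization factor $\sqrt\tau$ coming from the passage between the Hamiltonian flow of $|v|$ (under which $\tilde\Theta$ is a clean fixed set) and the geodesic spray of $|v|^2/2$.
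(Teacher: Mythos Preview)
Your block computations are correct: on the transverse $V_i\oplus V_i$ you recover the same factor $4(\cos\phi_i-\cos\tau)^2$ as the paper, and your observation that $P$ preserves the anti-graph $\{(A,-J_lA):A\in U_1\}$ and acts there as $\eye-T^{-2k}$ with $|\det|=4^{m-1}|\sin\tau|^{2m-2}$ is a genuinely nice alternative to the paper's direct evaluation of $\hat P$ on $B_2$. The paper never isolates this anti-graph invariance; instead it builds explicit Sasaki-orthonormal frames $E,F,B_1,B_2$ and feeds them straight into Uribe's formula.

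The substantive divergence is in how the radial direction is handled, and this is where your argument has a gap. The paper does \emph{not} include $(0,v)$ in the kernel $W$: it takes $W=T_\theta\Theta$ (dimension $2m-1$), places $(0,b_1)$ in the complement $V=B_1\oplus B_2$, and computes $\hat P(0,b_1)=(\tau b_1,0)$. That single step is precisely where the $\sqrt\tau$ enters the half-density. By contrast, you put $(0,v)$ into $W=T_\theta\tilde\Theta$ and then try to recover $1/\sqrt\tau$ later by ``dividing by the volume element in the transverse radial direction with the period-normalization appropriate to the Hamiltonian flow of $|v|$.'' No such period factor appears when one simply contracts a density on the cone against the unit radial vector at $|v|=1$; you would need to explain concretely what normalization you mean and why it produces $\tau^{-1/2}$. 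As it stands this is an assertion, not an argument.

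A related problem: if one actually carries your bookkeeping through (with $W$ symplectic, $f=\Omega^{-1}e$, Gram determinant $2^{2m-2}$, etc.), the powers that fall out are $2^{-(m-1)}|\sin\tau|^{-(m-1)}$, not the $2^{-(m-1)/4}|\sin\tau|^{-(m-1)/2}$ of the statement, and there is no $\sqrt\tau$. Your closing sentence ``the $\tfrac12$-density square roots taken throughout produce the half-powers \ldots'' is exactly the step that needs to be done rather than declared. The paper avoids all of this by working with orthonormal frames from the outset, so that both numerator and denominator of Uribe's ratio are transparent. If you want to keep your anti-graph/$T^{-2k}$ approach, the cleanest fix is to abandon the $|v|$-flow framing, put $(0,v)$ back into the complement of $W$ (so $W$ is the odd-dimensional $T_\theta\Theta$ as in the paper), and let $\hat P(0,v)=(\tau v,0)$ supply the $\sqrt\tau$ directly.
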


\begin{proof}
It suffices to do the calculation on the sphere covering $L$. Let $\theta = (x, v) \in TS^{2n-1}$ be a lift of $\ol{\theta}$. Since $\ol{x}$ is $\overset{(l)}{\sim}$-adapted corresponding to some equivalence class, $x$ is in the image of $\iota \colon S^{2m-1} \to S^{2n-1}$. Assume without loss of generality that the image of $\iota$ is in the first $2m$ coordinates of $\mathbb{R}^{2n}$. We identify $S^{2m-1}$ with its image under $\iota$. 

We construct a local section of the restriction of the orthonormal frame bundle over $S^{2n-1}$ to $S^{2m-1}$. For simplicity of notation, we suppress the dependence on the base point. Let  $O = \left[ b_1,..., b_{2m-1} \right]$ be an orthonormal frame around $x$ such that $b_1 = J_{l}(x)$ (here we think of each $b_i$ as an element of $\mathbb{R}^{2n}$). Let $W = \left[ v_{2m+1},...,v_{2n} \right]$ be such that $O \oplus W$ is a full orthonormal frame for $S^{2n-1}$ (locally defined in an $S^{2m-1}$-neighborhood of $x$). By our assumption on $\iota$, we may take $v_i$ to be the $i$-th standard basis vector in $\mathbb{R}^{2n}$ (independent of the base point in $S^{2m-1}$). 

We use this orthonormal frame to construct a local orthonormal frame around $\theta$ in $TS^{2n-1}$, with respect to the Sasaki metric. Using the vertical/horizontal splitting, we define the following orthonormal frames around $\theta$: 
\begin{align}
E &= \left[ (b_1, 0),  \frac{1}{\sqrt{2}}(b_2,J_{l}(b_2)),...,\frac{1}{\sqrt{2}}(b_{2m-1},J_l(b_{2m-1})) \right]\\
B_1 &= \left[(v_{2m+1},0),...,(v_{2n},0), (0, v_{2m+1}),...,(0,v_{2n}) \right]\\
B_2 &= \left[(0,b_1),  \frac{1}{\sqrt{2}}(J_{l}(b_2),b_2),...,\frac{1}{\sqrt{2}}(J_l(b_{2m-1}),b_{2m-1}) \right]. 
\end{align}
By Lemma \ref{char}, $E$ spans $T_\xi \mathcal{J}_l(S^{2m-1}) \subset T_{\xi}TS^{2n-1}$ for $\xi$ in a $\mathcal{J}_l(S^{2m-1})$-neighborhood of $\theta$ (here we are assuming that geodesics in $\Theta$ are oriented positively). Also $E \oplus B_1 \oplus B_2$ spans $T_\xi TS^{2n-1}$.
Define the following frame that is complementary to the symplectic orthogonal of $E$:
\begin{equation}
F = \left[ (0,b_1), \frac{1}{\sqrt{2}}(-J_{l}(b_2),b_2),...,\frac{1}{\sqrt{2}}(-J_l(b_{2m-1}),b_{2m-1}) \right].
\end{equation}
Simple computations verify that $F$ and $E$ satisfy: $\Omega(e_i,f_j) = \delta_{ij}$. We set $V = B_1 \oplus B_2$. Then $V$ is clearly a basis for the complement of $E$.   

Let $\nu = |\dvol|^{\frac{1}{2}}$ be the $\frac{1}{2}$-density induced from the Riemannian density on $TS^{2n-1}$ (with the Sasaki metric). A simple computation verifies that $E \oplus V$ is an orthonormal basis of $T_{\xi}TS^{2n-1}$.

We want to compute the expression in \eqref{uribe_eq} using $E$,$V$, and $F$ above. Clearly the numerator is 1. Thus it remains to compute the denominator of \eqref{uribe_eq}. Once again, using the fact that $TS^{2n-1} \overset{D\phi}{\to} TL$ is a Riemannian cover it suffices to compute the denominator of \eqref{uribe_eq} using the lift of the Poincare map to $TS^{2n-1}$. Given $(y, w) = \xi \in TS^{2n-1}$, define $DT \colon TS^{2n-1} \to TS^{2n-1}$ to be the induced action of $T \in \SO(2n)$ on $TS^{2n-1}$ given by $DT(y,w) = (Ty, Tw)$. Then using the linearity of the action (and the fact that $T$ acts via isometries), it is clear that $D_{\xi}DT(A,B) = (TA,TB)$ for $(A,B) \in T_{\xi}TS^{2n-1}$ (using the vertical/horizontal splitting). Then we need to understand the image of $V$ under $\hat{P} = \eye - D_{\theta} (DT^{-k} \circ \hat{\Phi}_{\tau})$, where $\hat{\Phi}_{t}$ is the time-$t$ map of the geodesic flow on $T(S^{2n-1})$. Observe that $\hat{P}$ preserves the 4-dimensional subspace spanned by $\lbrace (v_{2i-1},0),(v_{2i},0),(0,v_{2i-1}),(0,v_{2i}) \rbrace$, $i=1,...,n$.  Thus it suffices to study $\hat{P} = \eye - D_{\theta} (DT^{-k} \circ \hat{\Phi}_{\tau})$ on the invariant subspace spanned by $\lbrace (v_{2m+2i-1},0), (v_{2m+2i},0), (0,v_{2m+2i-1}), (0,v_{2m+2i})\rbrace$, $i=1,..., n-m$. 

It will often be easier from a computational standpoint to instead evaluate \newline $D_{\theta} DT^k(\eye - D_{\theta} (DT^{-k} \circ \hat{\Phi}_{\tau})) = D_{\theta}DT^k - D_{\theta} \hat{\Phi}_{\tau} =: \tilde{P}$, which still preserves the 4-dimensional subspaces above. We have $\hat{P} = D_{DT^k\theta}DT^{-k} \circ \tilde{P}$. It is easily seen that, restricted to this invariant subspace and with respect to this basis, $D_{\theta}DT^k - D_{\theta} \hat{\Phi}_{\tau}$ has the form:
\begin{equation}
\label{V_mat}
\newcommand*{\tempa}{\multicolumn{1}{|c}{-\sin(\tau) \eye}}
\newcommand*{\tempb}{\multicolumn{1}{|c}{\sin(\tau) \eye}}
\newcommand*{\tempc}{\multicolumn{1}{|c}{R(\alpha_i)^k - \cos(\tau) \eye}}
 \left(D_{\theta}DT^k - D_{\theta} \hat{\Phi}_{\tau}  \right)(v) = \left[
\begin{array}{cc}
R(\alpha_i)^k - \cos(\tau)\eye &\tempa \\ \hline
\sin(\tau) \eye &  \tempc
\end{array}\right]v,
\end{equation}
where $\alpha_i = \frac{2 \pi}{q}p_i$. A computation shows that the determinant of the matrix in \eqref{V_mat} is $4 (\cos(2 \pi k p_i/q) - \cos(\tau))^2$. Therefore the determinant of $\hat{P}$
restricted to one of these 4-dimensional invariant subspaces is $4 (\cos(2 \pi k p_i/q) - \cos(\tau))^2$.

Now we consider $\tilde{P}$ acting on $B_2$. First notice that $\hat{P}(0,b_1) = (\tau b_1,0)$. Moreover, by construction, the horizontal component of $\tilde{P}(\frac{1}{\sqrt{2}} (J_l(b_i),b_i))$ is given by (applying Lemma \ref{little_lemma}):
\begin{align*}
\frac{1}{\sqrt{2}} \left[T^k J_l(b_i) - \cos(\tau)J_l(b_i) - \sin(\tau) b_i \right] &= \frac{1}{\sqrt{2}}(T^k - \cos(\tau)) J_l(b_i) - \frac{\sin(\tau)}{\sqrt{2}} b_i \\
&= \frac{\sin(\tau)}{\sqrt{2}} J_l^2 b_i - \frac{\sin(\tau)}{\sqrt{2}} b_i\\
&= \frac{-2}{\sqrt{2}} \sin(\tau) b_i.
\end{align*}
Similarly, one can show that the vertical component is given by:
\begin{equation*}
\frac{2}{\sqrt{2}} \sin(\tau) J_l(b_i).
\end{equation*}
Thus, 
\begin{equation}
\hat{P} \left(\frac{1}{\sqrt{2}} (J_l(b_i),b_i) \right) = \frac{2}{\sqrt{2}} \sin(\tau) (-T^{-k}b_i, T^{-k}J_l(b_i) ).
\end{equation}

We now compute the denominator of \eqref{uribe_eq}. We have: 
\begin{align}
\nu(\hat{P}v \wedge f) &= \nu\left( \hat{P}b^1 \wedge \tau (b_1,0) \wedge 2 \sin(\tau) \tilde{b^2} \wedge f \right), 
\end{align}
where $b^1$ is the wedge product of elements in $B_1$, and $\tilde{b^2}$ is the wedge product of elements in:
\begin{equation*}
\tilde{B_2} = \left[  \frac{1}{\sqrt{2}}(-b_2,J_{l}(b_2)),...,\frac{1}{\sqrt{2}}(-b_{2m-1},J_l(b_{2m-1})) \right].
\end{equation*}
Since $\nu(b^1 \wedge (b_1,0) \wedge \tilde{b^2} \wedge f) = 1$, we have:
\begin{align}
\nu(\hat{P}v \wedge f)  &= \sqrt{\tau} \left(\prod_{p_i \overset{(l)}{\nsim} p_*} 2|\cos(2 \pi k p_i/q) - \cos(\tau) | \right) \left( \prod_{p_j \overset{(l)}{\sim} p_*} 2^{1/4} |\sin(\tau)|^{1/2}\right)\\
&= \sqrt{\tau} 2^{(m-1)/4} |\sin(\tau)|^{(m-1)/2} \left(\prod_{p_i \overset{(l)}{\nsim} p_*} 2|\cos(2 \pi k p_i/q) - \cos(\tau) | \right),
\end{align}
which completes the proof.
\end{proof}

\section{Three-dimensional lens spaces}
\label{3-dcase}

In this section we study the special case when the lens space is of the form $L(q; p_1,p_2)$ with $q$ an odd prime, i.e. the lens space is 3-dimensional and has fundamental group of odd prime order. The dimensional restriction provides the simplest case in which cancellation can occur in \eqref{zeroth}. First we prove a number-theoretic lemma. 
\begin{lemma}
\label{number_theory}
Let $q$ be an odd prime with $p \in \lbrace 1,..., q-1 \rbrace$. Fix $l \in \lbrace 1,2,...,q-1 \rbrace$ and let $k \in \lbrace 1,2,...,q-1 \rbrace$ be chosen so that $\cos( 2 \pi k p/q) = \cos(2 \pi l/q)$. Then
\begin{equation}
\label{relation}
\cos(2 \pi p l/q) - \cos(2 \pi p k/q ) - \cos(2 \pi l/q) + \cos(2 \pi k/q) = 0,
\end{equation}
if and only if $p \equiv \pm 1$ (mod $q$). 
\end{lemma}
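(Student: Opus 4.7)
The plan is to first simplify \eqref{relation} using the hypothesis $\cos(2\pi pk/q)=\cos(2\pi l/q)$, which collapses the four-term expression to
\[
\cos(2\pi pl/q) + \cos(2\pi k/q) = 2\cos(2\pi l/q).
\]
The forward direction is then immediate: if $p\equiv 1 \pmod q$ then $pl\equiv l$ and $pk\equiv k$, so the hypothesis gives $\cos(2\pi k/q)=\cos(2\pi l/q)$ and both sides equal $2\cos(2\pi l/q)$; the case $p\equiv -1 \pmod q$ is identical after noting that cosine is even. So the content of the lemma is in the converse.

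For the converse, I would translate the simplified identity into a $\mathbb{Z}$-linear relation in the cyclotomic ring. Set $\zeta = e^{2\pi i/q}$ and use $2\cos(2\pi m/q) = \zeta^m + \zeta^{-m}$ to rewrite the equation as
\[
\zeta^{pl} + \zeta^{-pl} + \zeta^{k} + \zeta^{-k} - 2\zeta^{l} - 2\zeta^{-l} = 0
\]
in $\mathbb{Z}[\zeta]$. Since $q$ is prime, the minimal polynomial of $\zeta$ over $\mathbb{Q}$ is $1+x+\cdots+x^{q-1}$, so the kernel of the evaluation map $\mathbb{Z}^q\to\mathbb{Z}[\zeta]$, $(c_0,\ldots,c_{q-1})\mapsto\sum c_i\zeta^i$, is the rank-one submodule generated by $(1,1,\ldots,1)$. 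Hence if we assemble the coefficient vector $(c_0,\ldots,c_{q-1})$ of the displayed identity (where positions are taken mod $q$, and coincidences add multiplicities), it must be a constant vector $(t,t,\ldots,t)$. Summing the visible coefficients gives $1+1+1+1-2-2 = 0$, so $qt=0$ and therefore $t=0$: every $c_i$ vanishes.

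The vanishing of every $c_i$ is equivalent to the multiset equality $\{pl,-pl,k,-k\} = \{l,l,-l,-l\}$ modulo $q$. Because $q$ is an odd prime and $pl\not\equiv 0\pmod q$, we have $pl\not\equiv -pl\pmod q$, so within the pair $\{pl,-pl\}$ one element is congruent to $l$ and the other to $-l$. In either case $pl\equiv\pm l\pmod q$, and since $l$ is a unit mod $q$ this forces $p\equiv\pm 1\pmod q$, which is the desired conclusion.

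The main obstacle, and really the only nontrivial step, is recognizing that the trigonometric identity is best handled as a linear-algebra statement inside $\mathbb{Z}[\zeta]$: once one knows the kernel of the evaluation map at a primitive $q$-th root of unity (which is where primality of $q$ enters essentially), the remainder is routine bookkeeping on multisets of exponents. A secondary care point is correctly accounting for possible coincidences among the six exponents $pl,-pl,k,-k,l,-l$; but handling them uniformly through the coefficient vector $(c_0,\ldots,c_{q-1})$ automatically treats every coincidence case at once.
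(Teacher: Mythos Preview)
Your proof is correct and follows essentially the same route as the paper: convert the cosine identity into a $\mathbb{Z}$-linear relation among $q$-th roots of unity and use that, for prime $q$, the only such relations are integer multiples of $1+\zeta+\cdots+\zeta^{q-1}$. The paper's version instead forms $\alpha=\zeta^{pl}-\zeta^{pk}+\zeta^{k}-\zeta^{l}$, notes $\operatorname{Re}\alpha=0$ forces $\alpha$ to be an integral combination of terms $\zeta^{i}-\zeta^{-i}$, and finishes with a short case split on $pk\equiv\pm l\pmod q$; your preliminary simplification using the hypothesis and the direct multiset comparison are a mild streamlining of the same idea.
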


\begin{proof} The equation clearly holds if $p \equiv \pm 1$ (mod $q$). 

For the other direction, 
let $\alpha \in \mathbb{Z}[\zeta_q]$ be an algebraic integer in $\mathbb{Q}(\zeta_q)$ where $\zeta_q = \exp(2 \pi i /q)$. Since $q$ is an odd prime we may write $\alpha$ uniquely as:
\begin{equation}
\alpha = \sum_{i=1}^{q-1} \epsilon_i \zeta_q^i, 
\end{equation}
for $\epsilon_i \in \mathbb{Z}$. Suppose that $\re (\alpha) = 0$. Then we see that $\alpha$ must be of the form:
\begin{equation}
\alpha = \sum_{i=1}^{\frac{q-1}{2}} \epsilon_i \left( \zeta_q^i - \zeta_q^{-i} \right),
\end{equation}
that is, $\alpha$ is an integral sum of sines.

If \eqref{relation} holds, then we would have the following algebraic integer: $\alpha = \zeta_q^{pl}- \zeta_q^{pk} + \zeta_q^k - \zeta_q^l$, with $\re (\alpha) = 0$. Since $\cos(2 \pi k p /q) = \cos(2 \pi l /q)$, we must have $p k \equiv \pm l$ (mod $q$). If $p k \equiv -l$ (mod $q$), then we have: $\alpha = \zeta_q^{pl} - \zeta_q^{-l}+\zeta_q^k - \zeta_q^l$. However, $\alpha$ must be of the form given in \eqref{relation} so $p \equiv 1$, and $l\equiv \pm k$ (mod $q$) with $\alpha = 0$. 

If $pk \equiv l$ (mod $q$), then we have: $\alpha = \zeta_q^{pl} - \zeta_q^{l}+\zeta_q^k - \zeta_q^l$. So, consulting \eqref{relation}, $pl \equiv k$ (mod $q$) and $k \equiv -l$ (mod $q$). Thus $p \equiv -1$ (mod $q$). 
\end{proof}

\begin{remark}
The author is grateful to Tom Shemanske for supplying the proof Lemma \ref{number_theory}.
\end{remark}

We leverage Lemma \ref{number_theory} to prove Theorem \ref{sporadic_result}:

\begin{proof}[Proof of Theorem \ref{sporadic_result}]
We need to show that the for every $\tau \in \specl \left(L(q;p_1,p_2) \right)$ the sum of the zeroth-order wave invariants corresponding to $\tau$ is nonzero. Fix $\tau = \frac{2 \pi l}{q}$, the length of a closed geodesic. It suffices to assume that the geodesic does not lift to a closed geodesic on $S^3$, since that case was handled in Section \ref{morse_index_comp}. There are two cases: either $p_1 \overset{(l)}{\sim} p_2$ or $p_{1} \overset{(l)}{\nsim} p_2$. 
\newline
\noindent {\it Case 1: $p_1 \overset{(l)}{\sim} p_2$}.
\newline In this case $\fix(\Phi_{\tau})$ has two isometric connected components corresponding to the closed geodesics and their time-reversals. Therefore, the Morse index and DG-density are identical in this case, and the two terms add constructively.
\newline
\noindent {\it Case 2: $p_1 \overset{(l)}{\nsim} p_2$}.
\newline
In this case let $S^1_i$ be the image of the isometric embedding of $S^1$ into $S^3$ which corresponds to the lift of $\overset{(l)}{\sim}$-adapted points for $p_i$. Set $N_i^{\pm} = \mathcal{J}_l^{\pm}(S^1_i)$. Then $\fix(\Phi_{\tau}) = D\phi(N_1^+) \cup D\phi(N_2^+) \cup D\phi(N_1^-) \cup D\phi(N_2^-)$ (the union is disjoint). As before, since $N^+_i$ is isometric to $N^-_i$ and the DG-density is invariant under time reversal, there are two terms in the sum in \eqref{zeroth}:
\begin{align}
\label{2dcase}
 \frac{i^{- \sigma_1}}{\pi} \int_{D\phi (N_1^+)} \mu_1 + \frac{i^{- \sigma_2}}{\pi} \int_{D\phi (N_2^+)} \mu_2. 
\end{align}
Lastly, assume that $k \in \lbrace 1,2,...,q-1 \rbrace$ realizes $l$ for $p_2$. By Proposition \ref{index}, the only way for the sum in \eqref{zeroth} to possibly be zero is if $\cos \left(\frac{2 \pi}{q} p_2 l \right) > \cos \left(\tau \right)$, and $\cos(\tau) > \cos \left( \frac{2 \pi}{q} k \right)$, or vice versa. 

Assume without loss of generality that $\cos \left(\frac{2 \pi}{q} p_2 l \right) > \cos \left(\tau \right)$ and \newline $\cos(\tau) > \cos \left( \frac{2 \pi}{q} k \right)$. Then the two terms in \eqref{2dcase} have opposite signs. By Theorem \ref{DG_density}, they cancel if and only if:
\begin{equation}
\label{not_possible}
\cos(2 \pi l p_2/q) - \cos(2 \pi k p_2/q) = \cos(2 \pi l/q) - \cos(2 \pi k/q).
\end{equation}
By Lemma \ref{number_theory}, \eqref{not_possible} holds only if $p_2 \equiv \pm 1$ (mod $q$), which means $p_2 \overset{(l)}{\sim} 1$. Thus the sum in \eqref{zeroth} is non-zero.    
\end{proof}

\bibliography{mybib}{}
\bibliographystyle{alpha}

\end{document}